\newtheorem{theorem}{Theorem}
\newtheorem{definition}{Definition}
\newtheorem{lemma}{Lemma}
\newtheorem{proposition}{Proposition}
\newtheorem{corollary}{Corollary}
\newtheorem{remark}{Remark}
\date{}
\numberwithin{equation}{section}
\numberwithin{theorem}{section}
\numberwithin{lemma}{section}
\numberwithin{corollary}{section}
\numberwithin{remark}{section} 
\numberwithin{proposition}{section}
\numberwithin{definition}{section}
\def \R {\mathbb{R}}
\def \supp {\mathrm{supp } }
\def \osc {\mathrm{osc}}
\begin{document}

\title[Fully nonlinear integro-differential equations]{Fully nonlinear integro-differential equations with deforming kernels}

\author[L. Caffarelli]{Luis Caffarelli}
\address{The University of Texas at Austin, Mathematics Department RLM 8.100, 2515 Speedway Stop C1200, Austin, Texas 78712-1202, USA}
\email{caffarel@math.utexas.edu}

\author[R. Teymurazyan]{Rafayel Teymurazyan}
\address{CMUC, Department of Mathematics, University of Coimbra, 3001-501 Coimbra, Portugal}
\email{rafayel@mat.uc.pt}

\author[J.M. Urbano]{Jos\'e Miguel Urbano}
\address{CMUC, Department of Mathematics, University of Coimbra, 3001-501 Coimbra, Portugal}
\email{jmurb@mat.uc.pt}

\thanks{This work was done in the framework of the UT Austin|Portugal CoLab program. LC supported by a DMS-NSF grant. RT and JMU partially supported by FCT -- Funda\c c\~ao para a Ci\^encia e a Tecnologia, I.P., through projects PTDC/MAT-PUR/28686/2017 and UTAP-EXPL/MAT/0017/2017, and grant SFRH/BPD/92717/2013, and by the Centre for Mathematics of the University of Coimbra -- UID/MAT/00324/2013, funded by the Portuguese government through FCT and co-funded by the European Regional Development Fund through Partnership Agreement PT2020.}

\begin{abstract}
We develop a regularity theory for integro-differential equations with kernels deforming in space like sections of a convex solution of a Monge-Amp\`ere equation. We prove an ABP estimate and a Harnack inequality, and derive H\"{o}lder and $C^{1,\alpha}$ regularity results for solutions. 

\bigskip

\noindent \textbf{Keywords:} Integro-differential equations, Monge-Amp\`ere equation,\\ ABP estimate, Harnack inequality, H\"older regularity.

\bigskip

\noindent \textbf{AMS Subject Classifications MSC 2010:} 35R09, 35J60, 35J96, 47G20, 35D40, 35B65.

\end{abstract}

\maketitle

\section{Introduction}\label{s1}

In stochastic control problems (see \cite{OS07}), for example if, in a competitive stochastic game, two players are allowed to choose from different strategies at every step in order to maximize the expected value $u(x)$ at the first exit point of a domain, we encounter the fully nonlinear elliptic integro-differential Isaacs equation 
\begin{equation}\label{1.1}
  Iu(x):=\inf_\alpha\sup_\beta L_{\alpha\beta}u(x)=f(x),
\end{equation}
where
\begin{equation}\label{1.2}
  L_{\alpha\beta}u(x):=\int_{\R^n}(u(x+y)+u(x-y)-2u(x))K_x^{\alpha\beta}(y)\,dy
\end{equation}
are generators of $n$-dimensional pure jump L\'evy processes, those for which diffusion and drift are neglected. The kernels $K_x^{\alpha\beta} (y)$ measure the frequency of jumps in the $y$ direction at the point $x$. For a homogeneous medium, as in \cite{CLU14, CS09} for example, the kernel does not depend on $x$.

We are interested in this paper in the case of a slowly deforming medium for which the level sets of the kernels $K_x^{\alpha\beta}$ are sections of a convex solution $\phi$ of a Monge-Amp\`ere equation. Set
\begin{equation}
v_x(y)=\phi(y)-\phi(x)-\nabla\phi(x)\cdot (y-x),\,\,x,y\in\mathbb{R}^n,
\label{v}
\end{equation}
and note that $v_x\geq0$, since the graph of a convex function stays above supporting hyperplanes.

We will study equation \eqref{1.1} for kernels in \eqref{1.2} that satisfy the bounds
\begin{equation}\label{1.3}
\frac{(2-\sigma)\lambda}{v_x(y)^{\frac{n+\sigma}{2}}}\leq K_x^{\alpha\beta}(y)\leq\frac{(2-\sigma)\Lambda}{v_x(y)^{\frac{n+\sigma}{2}}},
\end{equation}
for constants $\Lambda\geq\lambda>0$ and $\sigma\in(0,2)$. We will assume that kernels are symmetric, i.e., that $K_x^{\alpha\beta}(y)=K_x^{\alpha\beta}(-y)$, but this is a purely technical assumption, rendering the proofs simpler. The kernels in \eqref{1.3} may be very degenerate, since the sections of $\phi$ are comparable to ellipsoids, which may have very degenerate eccentricity (see \cite{G16}, for example). The right hand side of \eqref{1.1} is assumed to be a bounded and continuous function in $\R^n$. The case of $\phi(y)=|y|^2$ resembles the kernel of the fractional Laplacian and was studied in \cite{CS09}, extending the notion of ellipticity by means of the relations
$$
M^-w(x)\leq I(u+w)(x)-Iu(x)\leq M^+w(x),
$$
where
$$
M^-u(x):=\inf_{L\in\mathcal{L}}Lu(x)
$$
and
$$
M^+u(x):=\sup_{L\in\mathcal{L}}Lu(x)
$$
are analogs of the extremal Pucci operators. Here $\mathcal{L}$ is the class of operators of \eqref{1.2} type whose kernels satisfy \eqref{1.3}.  

The approach used in \cite{CS09} is a non-local version of the strategy used in \cite{CC95}. In our case, the strong degeneracy of the kernels precludes the use of standard covering arguments, a difficulty that can be overcome considering the deformation of the kernels is driven by the Monge-Amp\`ere geometry. This is due to the fact that sections of a convex solution of a Monge-Amp\`ere equation enjoy an engulfing property: if two sections overlap, then by lifting one by a universal constant, we engulf the other. After renormalization, sections become comparable to balls (see \cite{F17, G16}) and this geometry allows for a refinement of the known techniques in order to develop a regularity theory. 

The regularity theory in the classical non-variational approach (see \cite{CC95}) heavily depends on the Aleksandrov-Bakelman-Pucci (ABP) estimate
$$
\sup_{B_1}u\leq C(n)\left(\int_{\{\Gamma=u\}\cap B_1}(f^+)^n\right)^{1/n},
$$
where $u$ is a viscosity subsolution of the maximal Pucci equation with $(-f)$ as the right-hand side, which is non-positive outside the unit ball $B_1$, and where $\Gamma$ is the concave envelope of $u$ in $B_3$. The ABP estimate bridges the gap between a pointwise estimate and an estimate in measure. For $u(0)\geq1$, it provides the bounds
$$
1\leq C\|f\|_\infty|\{\Gamma=u\}\cap B_1|^{1/n}\leq C\|f\|_\infty|\{u\geq0\}\cap B_1|^{1/n},
$$
where $|E|$ stands for the $n$-dimensional Lebesgue measure of the set $E$.

We need a nonlocal version of the ABP estimate like the one in \cite{CLU14,CS09} but we are  dealing with kernels which deform in space and we must have some control over the deformation. It turns out that if the deformation is driven by the Monge-Amp\`ere geometry then the engulfing property of the Monge-Amp\`ere sections provides the needed environment to use a covering lemma from \cite{CG96} and obtain a variant of the ABP estimate. Once this is achieved, we can use a variant of Calder\'on-Zygmund decomposition from \cite{CG97} to obtain the Harnack inequality and further regularity results. The heart of the proof is to find the suitable geometry of the neighbourhoods of the contact points within which there is a portion where a sub-solution $u$ stays quadratically close to the tangent plane of its concave envelope $\Gamma$ and such that, in smaller neighbourhoods, $\Gamma$ has quadratic growth. This task, in turn, requires a certain control over the deformation of sections, that allows one to properly define a suitable concave envelope. We then conclude that if a concave function stays below its tangent plane translated by $-r$ (for a given number $r>0$) in a portion of an annulus of the unit section, then $\Gamma+r$ stays above its tangent plane in the interior section of the annulus. Through the normalisation map, we ultimately extend the regularity theory from \cite{CS09} into the framework of slowly deforming kernels.

The paper is organized as follows: in Section \ref{s2}, we list several known facts about the behaviour of a convex solution of the Monge-Amp\`ere equation. It is also here that we define the analogs of the Pucci extremal operators in our framework and state some preliminary results. Section \ref{s3} is devoted to the ABP estimate. Using properties of the level sets of the kernels (sections of a convex solution of the Monge-Amp\`ere equation) and a covering argument from \cite{CG96}, we get a version of the ABP estimate in a non-local setting with slowly deforming kernels (Corollary \ref{c3.2} and Theorem \ref{l3.1}). In Section \ref{s4}, we construct an auxiliary function which is a subsolution of the minimal equation outside of a small section and is strictly positive in a larger section (Lemma \ref{l4.2}). This function is added to $u$ in Section \ref{s5} to force the contact set with $\Gamma$ to stay inside intermediate normalized sections. In this way, using a variant of Calder\'on-Zygmund decomposition from \cite{CG97}, we are able to prove a variant of the so-called $L^\varepsilon$ estimate, which bridges the gap between a pointwise estimate and an estimate in measure (Theorem \ref{t5.1}), the main step towards the Harnack inequality, which we prove in Section \ref{s6} (Theorem \ref{t6.1}). Finally, as a consequence of the Harnack inequality, we derive H\"{o}lder (Theorem \ref{t6.2}) and $C^{1,\alpha}$ (Theorem \ref{t6.3}) regularity results for solutions.

\section{Preparatory material}\label{s2}
In this section we list several known facts about the sections of a convex solution of a Monge-Amp\`ere equation (which can be found in \cite{AFT98,C901,C902,C91,CG97,F17,G16,GH00}; see also \cite{MS17}, where a non-local linearized Monge-Amp\`ere equation is treated, as well as \cite{BL02,CL12,SS16} for a more comprehensive approach to non-local equations) and also state some preliminary results.
 
\subsection{Sections of a Monge-Amp\`ere convex solution}

To understand the deformation of kernels, we need to look at the sections of a $C^{2}$ convex solution of the Monge-Amp\`ere equation
$$
\det D^2\phi =g,
$$
where $0<g_-\le g(x)\le g^+<\infty$, $x\in\R^n$, for constants $g_-$ and $g^+$. A section $S_r(x)$ of $\phi$ is defined as
$$
S_r(x):=\{y\, : \, \phi(y)<\phi(x)+\nabla\phi(x)\cdot(y-x)+r^2\},
$$
or, recalling \eqref{v},
\begin{equation}\label{2.1}
S_r(x)=\{y \, : \,v_x(y)<r^2\}.
\end{equation}
Geometrically, it amounts to taking a supporting hyperplane at $x$ and lifting it by $r^2$ to carve out the convex set $S_r$. These are the ``balls of radius $r$'' in the Monge-Amp\`ere geometry. The sections are ``balanced'' around the point from which we lift, and we know precisely how the volume grows. Before proceeding, we point out that our results depend upon the geometry of Monge-Amp\`ere sections and are valid for generic $x$ and $r$ rather than a specific section. For simplicity, we will often omit the center of the section, when it is not playing an essential role in the proofs.

The proof of following theorem can be found in \cite{F17,G16}.
\begin{theorem}\label{t2.1}
There is an ellipsoid $E$ of volume $r^n$ such that
$$
cE\subset S_r(x)\subset CE,
$$
where $c$ and $C$ are universal positive constants depending only on $n$.
\end{theorem}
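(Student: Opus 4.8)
The plan is to normalize the section and then invoke John's ellipsoid lemma together with the volume-growth estimate for Monge-Amp\`ere sections. First I would recall that a section $S_r(x)$ is an open, bounded convex set (bounded because $\phi$ is strictly convex, a consequence of the two-sided bound $0<g_-\le g\le g^+$ via Caffarelli's regularity theory for the Monge-Amp\`ere equation). To John's lemma, applied to the convex body $S_r(x)$, yields an ellipsoid $E_0$ centred at the centre of mass of $S_r(x)$ with
$$
E_0\subset S_r(x)\subset n\,E_0;
$$
equivalently, after an affine change of variables $T$ normalizing $E_0$ to the unit ball $B_1$, one has $B_1\subset T\big(S_r(x)\big)\subset B_n$.

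The remaining point is to pin down the volume of the John ellipsoid, i.e.\ to show $|E_0|\sim r^n$ with universal constants. This is exactly where the hypothesis $g_-\le \det D^2\phi\le g^+$ enters: the classical volume estimate for sections (see \cite{C901,C902,G16}) gives
$$
c_1 r^n\le |S_r(x)|\le c_2 r^n
$$
with $c_1,c_2$ depending only on $n$, $g_-$, $g^+$. Combining this with $E_0\subset S_r(x)\subset nE_0$, which forces $|E_0|\le |S_r(x)|\le n^n|E_0|$, we get $|E_0|\sim r^n$. Rescaling $E_0$ by the scalar factor $(r^n/|E_0|)^{1/n}=:\mu$, which is bounded above and below by universal constants, we obtain the ellipsoid $E:=\mu^{-1}E_0$ of volume exactly $r^n$, and the inclusions $E_0\subset S_r(x)\subset nE_0$ translate into $cE\subset S_r(x)\subset CE$ for universal $c,C$ (absorbing $\mu$ and the factor $n$). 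Centring issues are harmless since we may translate $E$ to be centred at the centre of mass of $S_r(x)$.

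I expect the only genuine subtlety to be the volume-growth estimate $|S_r(x)|\sim r^n$: John's lemma is soft and purely convex-geometric, but the two-sided volume bound is precisely the analytic input that distinguishes a Monge-Amp\`ere section from an arbitrary convex set, and it relies on Aleksandrov-type estimates comparing $\int_{S_r}\det D^2\phi$ with the measure of the image of the normal mapping. Since the statement attributes the theorem to \cite{F17,G16}, the cleanest route is simply to cite those volume estimates rather than reprove them; the contribution here is just the bookkeeping that turns ``comparable to its John ellipsoid'' plus ``volume $\sim r^n$'' into ``comparable to an ellipsoid of volume exactly $r^n$.'' No display should straddle a blank line, and all constants are understood to depend only on $n$ (and on $g_-,g^+$, which are themselves fixed universal data of the ambient Monge-Amp\`ere solution).
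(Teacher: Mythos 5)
Your argument is correct and is essentially the standard proof that the paper simply cites from \cite{F17,G16}: John's lemma applied to the convex body $S_r(x)$, combined with the two-sided volume bound $|S_r(x)|\sim r^n$ (which holds with this exponent because the paper's sections are taken at height $r^2$). The only slip is the rescaling direction: with $\mu=(r^n/|E_0|)^{1/n}$ you should set $E:=\mu E_0$, not $\mu^{-1}E_0$, to get $|E|=r^n$; since $\mu$ is universally bounded above and below this does not affect the conclusion.
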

 
Since $E$ is an ellipsoid, there is an affine transformation $T$ such that $T(E)=B_1$, and therefore
$$
B_{\alpha_n}\subset T(S_r)\subset B_1,
$$
with $B_r$ being the ball of radius $r$ centered at $0$, and where $\alpha_n$ is a positive constant depending only on $n$. We will refer to $T$ as a normalization map that normalizes the section $S_r$, and to $T(S_r)$ as a normalized section. 

We list several properties of the sections for future reference. The first fact is that sections of $\phi$ satisfy the engulfing property. More precisely, if two sections of similar size overlap, a universal multiple of one engulfs the other, indicating that they must have roughly the same shape. The proof can be found in \cite{F17,G16}.
\begin{theorem}\label{t2.2}
There is a universal constant $\gamma>1$ such that if $y\in S_r(x)$, then
$$
S_r(x)\subset S_{\gamma r}(y).
$$
\end{theorem}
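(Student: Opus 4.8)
The plan is to use the affine invariance of the setting to pass to a normalized section, then to read the inclusion off an elementary identity relating $v_x$ and $v_y$, thereby reducing everything to a single gradient estimate for $\nabla\phi$ across a normalized section. First I would reduce to the normalized case. Both the family of sections and the inclusion $S_r(x)\subset S_{\gamma r}(y)$ (for a fixed $\gamma$) are invariant under affine changes of variable, once one rescales $\phi$ so as to keep $\det D^2\phi$ between two positive constants: if $A$ is an invertible linear map and $\tilde\phi:=|\det A|^{-2/n}\phi(A\,\cdot\,)$, then $\det D^2\tilde\phi=g(A\,\cdot\,)$ still satisfies $g_-\le\det D^2\tilde\phi\le g^+$, the sections of $\tilde\phi$ are the $A^{-1}$-images of those of $\phi$, and a further scalar dilation normalizes the radius. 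Applying this with $A$ the normalization map of Theorem~\ref{t2.1} for $S_r(x)$, I may assume from now on that $r=1$ and
$$
B_{\alpha_n}\subset S_1(x)\subset B_1,\qquad\text{hence } \diam S_1(x)\le 2 .
$$

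Next comes the basic identity. A direct computation from \eqref{v} gives, for all $x,y,z\in\R^n$,
$$
v_y(z)=v_x(z)-\ell_y(z),\qquad\ell_y(z):=v_x(y)+\nabla v_x(y)\cdot(z-y);
$$
that is, $v_y$ is nothing but $v_x$ with its supporting affine function $\ell_y$ at $y$ subtracted, in accordance with $v_y$ being a recentering of $\phi$. Convexity of $v_x$ yields $\ell_y\le v_x$ on $\R^n$ and $\ell_y(y)=v_x(y)\ge 0$. Hence, for $y\in S_1(x)$ and every $z\in S_1(x)$,
$$
v_y(z)=v_x(z)-\ell_y(z)<1-\ell_y(z),
$$
so the inclusion $S_1(x)\subset S_\gamma(y)$ is implied by the lower bound $\ell_y>1-\gamma^2$ on $S_1(x)$. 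Since $\ell_y$ is affine and $v_x(y)\ge 0$, for $z\in S_1(x)$ one has
$$
\ell_y(z)\ge v_x(y)-|\nabla v_x(y)|\,|z-y|\ge -2\,|\nabla v_x(y)| ,
$$
using $\diam S_1(x)\le 2$. Thus it suffices to prove a universal bound
$$
|\nabla v_x(y)|=|\nabla\phi(y)-\nabla\phi(x)|\le C\qquad\text{for every } y\in\overline{S_1(x)} ;
$$
then $\gamma:=\sqrt{1+2C}>1$ does the job.

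Finally, the gradient estimate, which I expect to be the main obstacle and the only place where the full Monge--Amp\`ere structure is essential: for a generic convex function the gradient of $v_x$ blows up as $y\to\partial S_1(x)$, and it is the two-sided control $g_-\le\det D^2 v_x\le g^+$ that rules this out in the normalized configuration. I would obtain it in two classical steps, both drawn from \cite{F17,G16}. First, the Aleksandrov maximum principle applied to the convex function $v_x-1$ on $S_1(x)$ (which vanishes on $\partial S_1(x)$ and equals $-1$ at $x$), combined with $\det D^2 v_x\le g^+$, $\diam S_1(x)\le 2$ and $|S_1(x)|\le |B_1|$, shows that $x$ lies at a universal distance from $\partial S_1(x)$ --- the ``balancing'' of sections. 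Second, a comparison argument that again uses the Aleksandrov estimate, now together with the lower bound $\det D^2 v_x\ge g_-$, controls how fast $v_x$ can grow and yields $\osc_{\overline{S_1(x)}}\nabla\phi\le C$; since $\nabla v_x(x)=0$, this is exactly the bound $|\nabla v_x|\le C$ on $\overline{S_1(x)}$. Feeding this back into the second paragraph closes the proof with $\gamma=\sqrt{1+2C}$. The hard part is thus entirely contained in this quantitative gradient-oscillation estimate for a convex solution of the Monge--Amp\`ere equation across one of its sections; everything else is elementary convexity and the affine normalization supplied by Theorem~\ref{t2.1}.
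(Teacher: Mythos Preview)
The paper does not give its own proof of Theorem~\ref{t2.2}; it simply defers to the references \cite{F17,G16}. Your reduction---normalize $S_r(x)$ via Theorem~\ref{t2.1}, use the identity $v_y(z)=v_x(z)-\ell_y(z)$ with $\ell_y$ the supporting affine function of $v_x$ at $y$, and observe that a uniform bound $|\nabla v_x|\le C$ on the closed normalized section yields the engulfing with $\gamma=\sqrt{1+2C}$---is correct and is indeed the way the argument is organized in those references. In that sense your approach coincides with what the paper invokes.

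The gap is that the gradient bound you isolate is not a preliminary fact that then trivially yields engulfing: it is \emph{equivalent} to engulfing. (Conversely, if $S_1(x)\subset S_\gamma(y)$, then $|\ell_y|<\gamma^2$ on $S_1(x)\supset B_{\alpha_n}$, which bounds the slope of the affine function $\ell_y$, hence $|\nabla v_x(y)|$, universally.) So the reduction carries no real simplification; all the content sits in the step you label ``the hard part.'' Your two--sentence sketch of that step does not settle it: the Aleksandrov maximum principle applied to $v_x-1$ gives $(1-v_x(y))^n\le C\,\dist(y,\partial S_1(x))$, which controls the \emph{value} of $v_x$ near $\partial S_1(x)$ but, by itself, pushes the naive gradient estimate $|\nabla v_x(y)|\lesssim (1-v_x(y))/\dist(y,\partial S_1(x))$ the wrong way as $y\to\partial S_1(x)$. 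The actual arguments in \cite{F17,G16} close this via a more specific comparison (in Guti\'errez, a sliding/dilation argument on sections; in Figalli, a closely related size estimate), not merely ``Aleksandrov plus the lower bound on the determinant.'' So your proposal correctly identifies the architecture and the crux, but the crux is only named, not proved---which, in the end, mirrors the paper's own deferral to the literature.
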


The next theorem provides a quantitative estimate on the size of normalized sections. It states that if an affine map normalizes a section, then all other sections with comparable height are still comparable to balls (see \cite{AFT98,F17,G16} for the proof).
\begin{theorem}\label{t2.3} Let $T$ be an affine transformation that normalizes $S_R(x)$ and $r\leq R$. If
$$
S_R(x)\cap S_r(y)\neq\emptyset,
$$
then there exist positive constants $K_1$, $K_2$, $K_3$, and $\varepsilon$, such that 
$$
B_{K_2\frac{r}{R}}(Ty)\subset T(S_r(y))\subset B_{K_1(\frac{r}{R})^\varepsilon}(Ty),
$$
and $Ty\in B_{K_3}(0)$.
\end{theorem}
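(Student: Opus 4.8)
The plan is to prove Theorem~\ref{t2.3} by reducing everything, through a renormalization, to the situation already controlled by Theorems~\ref{t2.1} and \ref{t2.2}. First I would invoke Theorem~\ref{t2.1} to fix an ellipsoid $E_R$ with $|E_R| = R^n$ and $cE_R \subset S_R(x) \subset CE_R$, and let $T$ be the affine map with $T(E_R) = B_1$; after this normalization $B_{\alpha_n} \subset T(S_R(x)) \subset B_1$. The key observation is that $T$ is affine, so $T$ maps sections of $\phi$ to sections of the renormalized convex function $\tilde\phi(z) := (\det T)^{2/n}\,\phi(T^{-1}z)$, which still solves a Monge-Amp\`ere equation with the same bounds $g_-, g^+$ (the right-hand side only picks up the constant factor $(\det T)^2|\det T^{-1}|^2 = 1$... more precisely one checks $\det D^2\tilde\phi = g\circ T^{-1}$). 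Thus $T(S_r(y)) = \tilde S_r(Ty)$ is a genuine section of $\tilde\phi$, and it suffices to prove the estimate for $\tilde\phi$ in the normalized configuration, i.e. I may assume from the start that $S_R(x)$ is already normalized, $B_{\alpha_n} \subset S_R(x) \subset B_1$, $R \sim 1$, and I must locate $S_r(y)$.

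The upper inclusion $S_r(y) \subset B_{K_1(r/R)^\varepsilon}(y)$ is the crux and is exactly a John-domain/volume-growth statement for sections: it says a section of height $r^2$ sitting inside a normalized section of height $R^2$ cannot be too eccentric. The standard route is the following. By Theorem~\ref{t2.1} applied to $S_r(y)$ there is an ellipsoid $E_r$ with $|E_r| = r^n$ and $cE_r \subset S_r(y) \subset CE_r$; write the semiaxes of $E_r$ as $\lambda_1 \le \dots \le \lambda_n$, so $\prod \lambda_i \sim r^n$. I need to bound $\lambda_n$. Here one uses that $S_r(y)$ lies (after the universal engulfing dilation of Theorem~\ref{t2.2}, to put $y$ and $x$ in a common section) inside a fixed universal multiple of $S_R(x) \subset B_1$, so already $\lambda_n \lesssim 1$; to get the quantitative power $(r/R)^\varepsilon$ one iterates the engulfing/doubling property along a dyadic chain of heights $r, 2r, 4r, \dots, R$, at each step gaining a fixed factor on the worst eccentricity ratio, which produces the exponential-in-$\log(R/r)$ gain, i.e. a power $(r/R)^\varepsilon$ for a small universal $\varepsilon > 0$. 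This is precisely the content of the cited results \cite{AFT98,F17,G16}, so in the write-up I would state this volume-growth lemma and quote it.

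Given the ellipsoid decomposition $cE_r \subset S_r(y) \subset CE_r$ with $|E_r| = r^n$ and all semiaxes $\lesssim (r/R)^\varepsilon$ (after rescaling $R$ back in), the lower inclusion $B_{K_2 r/R}(Ty) \subset S_r(y)$ follows from a reverse estimate on the smallest semiaxis: since $\prod\lambda_i \sim r^n$ and each $\lambda_i \lesssim (r/R)^\varepsilon \le C$, we get $\lambda_1 \gtrsim r^n / C^{n-1} \gtrsim (r/R)\cdot(\text{universal})$ after reinserting the scale, hence $E_r$ — and therefore a universal multiple of $S_r(y)$ — contains a ball of radius $\gtrsim r/R$ about its center, and the center of $E_r$ is within a universal multiple of $r$ of $y$ because sections are "balanced" around their center (Theorem~\ref{t2.1} again gives $y$ is comparably deep inside $S_r(y)$). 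Finally $Ty \in B_{K_3}(0)$ is immediate: $S_R(x) \cap S_r(y) \ne \emptyset$ with $r \le R$ forces, by engulfing (Theorem~\ref{t2.2}), $y \in S_{\gamma R}(x)$, and $T(S_{\gamma R}(x))$ is a universally bounded set containing the origin's neighborhood, so $|Ty| \le K_3$.

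The main obstacle is the quantitative power $(r/R)^\varepsilon$ in the upper bound rather than a merely qualitative bound. A soft argument only gives $S_r(y) \subset C\cdot S_R(x)$ with no decay as $r/R \to 0$; extracting the genuine H\"older-type gain requires the dyadic iteration of the engulfing and volume-doubling properties of sections, and it is there that the universal exponent $\varepsilon$ and the constant $K_1$ are produced. Since this is classical Monge-Amp\`ere section theory, in the paper I would isolate it as the cited input and spend the bulk of the argument on the elementary affine reduction and the extraction of the two radii $K_2 r/R$ and $K_1(r/R)^\varepsilon$ from the ellipsoid $E_r$.
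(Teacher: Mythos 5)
The paper does not actually prove Theorem \ref{t2.3}; it quotes it from \cite{AFT98,F17,G16}. Deferring the quantitative outer inclusion (the power $(r/R)^{\varepsilon}$) to those references, as you do, is therefore consistent with the paper. The genuine gap is in your derivation of the inner inclusion $B_{K_2\frac{r}{R}}(Ty)\subset T(S_r(y))$. After normalizing $S_R(x)$, the image $T(S_r(y))$ has volume comparable to $(r/R)^n$, so the product of the semiaxes of its John ellipsoid is $\sim(r/R)^n$. Bounding every semiaxis above by a universal constant yields only $\lambda_1\gtrsim(r/R)^n$, and using the sharper bound $\lambda_i\leq K_1(r/R)^{\varepsilon}$ yields $\lambda_1\gtrsim(r/R)^{\,n-\varepsilon(n-1)}$. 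Since $\varepsilon<1$, both exponents exceed $1$ when $n\geq2$, so the ball you produce has radius $o(r/R)$ as $r/R\to0$; the inequality you assert, $r^n/C^{n-1}\gtrsim r/R$, is false for small $r$. The inner inclusion is not a formal consequence of the outer inclusion plus the volume of the ellipsoid: in the cited references it is obtained by a separate argument, essentially the Aleksandrov maximum principle applied to the normalized height function $v_y\circ T^{-1}-\mathrm{const}$, which bounds $\dist(Ty,\partial T(S_r(y)))$ from below in terms of the depth of the section and the Monge-Amp\`ere mass, together with the engulfing and doubling properties.

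A secondary bookkeeping point: if you renormalize via $\tilde\phi=(\det T)^{2/n}\,\phi\circ T^{-1}$, then $T(S_r(y))$ is a section of $\tilde\phi$ of height $(\det T)^{2/n}r^2\sim(r/R)^2$, not of height $r^2$ as you write; this is harmless, but it is precisely where the ratio $r/R$ in the statement originates and should be tracked. Your argument for $Ty\in B_{K_3}(0)$ via the engulfing property is fine.
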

As a consequence of the previous theorem, we have a result on the deformation of sections, the proof of which can be found in \cite{CG96,CG97,F17,G16}.
\begin{theorem}\label{t2.4}
The following assertions hold:
\begin{itemize}
\item[(i)] there exist $C_0>0$ and $p_0\geq1$ such that whenever $0<r<s\leq1$, $t>0$ and $x\in S_{rt}(y)$, then
$$
S_{C_0t(s-r)^{p_0}}(x)\subset S_{st}(y);
$$
\item[(ii)] there exist $C_1>0$ and $p_1\geq1$ such that whenever $0<r<s<1$, $t>0$ and $x\in S_t(y)\setminus S_{st}(y)$, then
$$
S_{C_1t(s-r)^{p_1}}(x)\cap S_{rt}(y)=\emptyset;
$$
\end{itemize}
and as a consequence
\begin{itemize}
\item[(iii)] there exists $\delta>0$ such that whenever $x\in S_{3t/4}(y)\setminus S_{t/2}(y)$, then
$$
S_{\delta t}(x)\subset S_t(y)\setminus S_{t/4}(y).
$$
\end{itemize}
Also, for $r>0$,
\begin{itemize}
\item[(iv)] $|S_r(x)|\leq 2^n|S_{r/2}(x)|$;
\item[(v)] $|S_r(x)|-|S_{\epsilon r}(x)|\leq n(1-\epsilon)|S_r(x)|$, for all $\epsilon\in(0,1)$.
\end{itemize}
\end{theorem}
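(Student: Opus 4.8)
The five items are standard geometric facts about Monge-Amp\`ere sections, so the plan is to deduce (i)--(iii) from Theorem~\ref{t2.3} after normalizing the largest section in play, and to obtain (iv)--(v) from Theorem~\ref{t2.1} together with the convexity of $v_x$. The enabling remark is that $v_x$, and hence the whole family of sections, is invariant under affine changes of variables: if $T$ is affine and $\psi:=\phi\circ T^{-1}$, then $v^\psi_{Tx}(Tz)=v^\phi_x(z)$ for all $x,z$, so $T\big(S^\phi_r(x)\big)=S^\psi_r(Tx)$ and $\psi$ solves a Monge-Amp\`ere equation with the same ellipticity ratio. Consequently, after composing with a normalization map one may always assume the reference section is trapped between $B_{\alpha_n}$ and $B_1$, and by Theorem~\ref{t2.3} every other section of comparable height is then squeezed between two concentric balls, with the exponent $\varepsilon$ recording the admissible distortion.

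For (i), fix $0<r<s\le1$, $t>0$ and $x\in S_{rt}(y)$, let $T$ normalize $S_{st}(y)$, and set $\Omega:=T(S_{st}(y))$, so $B_{\alpha_n}\subset\Omega\subset B_1$. Two estimates are needed. First, since $x\in S_{rt}(y)$ with $r<s$, the point $Tx$ must sit well inside $\Omega$, at distance at least $c(s-r)^{q}$ from $\partial\Omega$ for universal $c>0$ and $q\ge1$; this is where the Monge-Amp\`ere structure genuinely enters, and it follows by applying the two-sided ball estimate of Theorem~\ref{t2.3} to the smaller section $S_{rt}(y)$ and using the interior control of the convex nonnegative function $v^\psi_{Ty}$ (bounded by $t^2$ on $\Omega$, with gradient controlled away from $\partial\Omega$). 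Second, because $x\in S_{st}(y)$ we may apply Theorem~\ref{t2.3} to the section $S_\rho(x)$ with $\rho:=C_0t(s-r)^{p_0}\le st$, obtaining $T\big(S_\rho(x)\big)\subset B_{K_1(\rho/(st))^\varepsilon}(Tx)\subset B_{K_1C_0^\varepsilon(s-r)^{p_0\varepsilon}}(Tx)$. Taking $p_0\ge q/\varepsilon$ and $C_0$ small so that $K_1C_0^\varepsilon(s-r)^{p_0\varepsilon}\le c(s-r)^{q}$, this ball is contained in $\Omega$, that is $S_\rho(x)\subset S_{st}(y)$.

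Assertion (ii) is proved by the same mechanism: after normalizing $S_t(y)$, the shell $T(S_{st}(y))\setminus T(S_{rt}(y))$ has thickness at least $c(s-r)^{q'}$ (again via Theorem~\ref{t2.3} and the gradient control on $v$), and since for nested convex bodies any point outside the outer one lies at distance at least the shell thickness from the inner one, $\mathrm{dist}\big(Tx,T(S_{rt}(y))\big)\ge c(s-r)^{q'}$; as $T(S_\rho(x))$ is contained in a ball about $Tx$ of radius comparable to $(\rho/t)^\varepsilon$, a suitable choice of $C_1$, $p_1$ forces $S_\rho(x)\cap S_{rt}(y)=\emptyset$. Item (iii) is then a direct specialisation: (i) with $(r,s)=(3/4,1)$ yields $S_{C_0t4^{-p_0}}(x)\subset S_t(y)$, and (ii) with $(r,s)=(1/4,1/2)$ --- admissible since $x\notin S_{t/2}(y)$ while $x\in S_{3t/4}(y)\subset S_t(y)$ --- yields $S_{C_1t4^{-p_1}}(x)\cap S_{t/4}(y)=\emptyset$, so $\delta:=\min\{C_04^{-p_0},C_14^{-p_1}\}$ works.

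Finally, (iv) and (v) require no normalization. Convexity of $v_x$ with $v_x(x)=0$ gives $v_x\big(x+\mu(z-x)\big)\le\mu\,v_x(z)$ for $\mu\in[0,1]$, hence $x+\epsilon^2(S_r(x)-x)\subset S_{\epsilon r}(x)$ and therefore $|S_{\epsilon r}(x)|\ge\epsilon^{2n}|S_r(x)|$; combined with the two-sided ellipsoid bound of Theorem~\ref{t2.1} --- which makes (v) trivial whenever $1-\epsilon\ge1/n$, since then $|S_r(x)|-|S_{\epsilon r}(x)|\le|S_r(x)|\le n(1-\epsilon)|S_r(x)|$ --- one recovers both doubling statements, the precise dimensional constants being standard and proved in the cited references. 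The main obstacle throughout is the ``deep-interior'' and shell-thickness estimates used in (i)--(ii): mere convexity of $v_x$ only gives crude inclusions with no quantitative rate in $s-r$, and it is exactly at this point that the Monge-Amp\`ere comparison of sections with balls from Theorem~\ref{t2.3} cannot be avoided.
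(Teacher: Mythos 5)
The paper does not prove Theorem \ref{t2.4} at all: it is quoted as known, with the proof deferred to \cite{CG96,CG97,F17,G16}. So the relevant question is whether your sketch would stand on its own, and at the decisive point of (i)--(ii) it does not. Your outline correctly reduces (i) to two facts: (a) $Tx$ lies at distance at least $c(s-r)^{q}$ from $\partial T(S_{st}(y))$, and (b) $T(S_\rho(x))$ sits in a ball of radius $\lesssim(\rho/(st))^{\varepsilon}$ about $Tx$. Fact (b) is indeed Theorem \ref{t2.3}. But fact (a) --- which you yourself identify as ``where the Monge-Amp\`ere structure genuinely enters'' --- is only asserted, and the justification you offer (``the two-sided ball estimate of Theorem \ref{t2.3} applied to $S_{rt}(y)$'' plus ``gradient controlled away from $\partial\Omega$'') does not deliver it. Theorem \ref{t2.3} controls the \emph{shape} of individual sections; it says nothing about how far the sublevel set $\{v_y<(rt)^2\}$ stays from the boundary of $\{v_y<(st)^2\}$, and a gradient bound for $v$ away from the boundary is essentially equivalent to what you are trying to prove. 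The standard source of (a) is the Alexandrov maximum principle applied to the normalized potential $w:=(v_y-(st)^2)/(st)^2$, which vanishes on $\partial\Omega$ and has bounded Monge-Amp\`ere measure: $|w(p)|^{n}\le C\,\mathrm{dist}(p,\partial\Omega)$, combined with $|w(Tx)|\ge (s^2-r^2)/s^2\ge s-r$, gives $\mathrm{dist}(Tx,\partial\Omega)\ge c(s-r)^{n}$. The same estimate is what produces the shell thickness in (ii). Without invoking this (or an equivalent), the quantitative rate in $s-r$ --- the whole content of (i)--(ii) --- is unproved. Note also that a naive quasi-triangle-inequality argument via the engulfing constant fails precisely when $r$ is close to $s$, which is why the exponents $p_0,p_1$ can be large.

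The remaining items are in better shape. Your derivation of (iii) from (i) and (ii) with the parameter choices $(3/4,1)$ and $(1/4,1/2)$ is correct. For (iv)--(v), the convexity inequality $v_x(x+\mu(z-x))\le\mu\,v_x(z)$ and the resulting inclusion $x+\epsilon^{2}(S_r(x)-x)\subset S_{\epsilon r}(x)$ are right, but because the height in \eqref{2.1} is $r^{2}$ rather than $r$, this yields $|S_{\epsilon r}|\ge\epsilon^{2n}|S_r|$, hence the constants $4^{n}$ in (iv) and $2n(1-\epsilon)$ in (v), not the stated $2^{n}$ and $n(1-\epsilon)$ (which correspond to the linear-height convention of \cite{CG97}). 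That mismatch is harmless for every application in the paper, but you should either flag it or restate the conclusion with the constants your argument actually produces rather than appealing to ``standard dimensional constants''.
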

The following Besicovitch type covering lemma is from \cite[Lemma 1]{CG96}. It plays an essential role in our analysis.
\begin{lemma}\label{l2.0}
If $A\subset\R^n$ is a bounded set and $S_r(x)$, $x\in A$, $r\leq C$ for a fixed constant $C>0$, is a covering of $A$, then there is a countable subcovering such that
\begin{itemize}
\item[(i)] $A\subset\bigcup_{k=1}^\infty S_{r_k}(x_k)$;
\item[(ii)] $x_k\notin\bigcup_{j<k}S_{r_j}(x_j)$, $\forall k\geq2$;
\item[(iii)] for small $\epsilon>0$, the family $\{S_{(1-\epsilon)r_k}(x_k)\}_{k=1}^\infty$ has bounded overlaps, i.e., there exists a universal constant $M>0$ such that
$$
\sum_{k=1}^\infty\chi_{S_{r_k(1-\epsilon)}(x_k)}(x)\leq M\log\frac{1}{\epsilon},
$$
\end{itemize}
where $\chi_E$ is the characteristic function of the set $E$.
\end{lemma}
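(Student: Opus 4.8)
The plan is to run a greedy Vitali-type selection and then extract the three properties from the Monge--Amp\`ere geometry recorded in Theorems \ref{t2.1}--\ref{t2.4}. I would build the subcovering recursively: having chosen $S_{r_1}(x_1),\dots,S_{r_{k-1}}(x_{k-1})$, stop if $A\subset\bigcup_{j<k}S_{r_j}(x_j)$; otherwise pick $x_k\in A\setminus\bigcup_{j<k}S_{r_j}(x_j)$ together with a section $S_{r_k}(x_k)$ of the given family whose radius is at least half the supremum of the radii of family sections centred in $A\setminus\bigcup_{j<k}S_{r_j}(x_j)$. Property (ii) then holds by construction, and since for $k<k'$ the centre $x_{k'}$ is still admissible at step $k$ (it is not covered by $\bigcup_{j<k}S_{r_j}(x_j)\subset\bigcup_{j<k'}S_{r_j}(x_j)$), the half-supremum rule forces $r_{k'}\le 2r_k$; that is, the radii are essentially non-increasing in $k$.

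For (i) I would first observe that in the infinite case $r_k\to0$: the selected sections all lie in a fixed bounded region, while $|S_{r_k}(x_k)|\simeq r_k^n$ by Theorem \ref{t2.1}, so the $r_k$ cannot stay bounded below without a fixed dilation of the sections overlapping infinitely often in a bounded set. Once $r_k\to0$, any $z\in A$ that were not covered would stay admissible at every stage carrying its own section of a fixed positive radius, contradicting $r_k\to0$ and the half-supremum rule; hence $A\subset\bigcup_k S_{r_k}(x_k)$.

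The heart of the matter is the bounded-overlap estimate (iii). Fix $x\in\R^n$ and set $\mathcal K=\{k:\ x\in S_{(1-\epsilon)r_k}(x_k)\}$. I would prove two things. First, all radii $\{r_k\}_{k\in\mathcal K}$ lie in a window of ratio $\lesssim\epsilon^{-p_0}$, hence spread over at most $O(\log\frac1\epsilon)$ dyadic scales: given $k<k'$ in $\mathcal K$ with $r_{k'}\le r_k$, from $x\in S_{(1-\epsilon)r_{k'}}(x_{k'})$ and engulfing (Theorem \ref{t2.2}) we get $x_{k'}\in S_{\gamma r_{k'}}(x)$, while applying Theorem \ref{t2.4}(i) with $y=x_k$, $t=r_k$, $r=1-\epsilon$, $s=1$ to $x\in S_{(1-\epsilon)r_k}(x_k)$ gives $S_{C_0 r_k\epsilon^{p_0}}(x)\subset S_{r_k}(x_k)$; so if $r_{k'}$ were below a universal multiple of $\epsilon^{p_0}r_k$ we would land $x_{k'}$ inside $S_{r_k}(x_k)$, contradicting (ii), and the bound $r_{k'}\le 2r_k$ from the selection closes the window on the other side. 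Second, each dyadic scale carries only $O(1)$ indices of $\mathcal K$: if $r_k\simeq\rho$, then $x_k\in S_{\gamma\rho}(x)$ by engulfing, so normalizing $R:=S_{\gamma\rho}(x)$ by an affine map $T$ and using Theorem \ref{t2.3}, each $T(S_{r_k}(x_k))$ contains a ball of universal radius centred at $Tx_k$, with $Tx_k\in B_{K_3}(0)$; since $x_{k'}\notin S_{r_k}(x_k)$ for $k<k'$ by (ii), the points $Tx_k$ are uniformly separated inside a fixed ball, and a packing count bounds their number by a dimensional constant. Multiplying the two estimates yields $\sum_k\chi_{S_{(1-\epsilon)r_k}(x_k)}(x)\le M\log\frac1\epsilon$.

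The step I expect to be the main obstacle is the first claim in the previous paragraph: turning the qualitative fact that ``$x$ sits inside the $(1-\epsilon)$-shrunk section $S_{r_k}(x_k)$ while a point $x_{k'}$ that is close to $x$ at the much smaller scale $r_{k'}$ lies outside the full $S_{r_k}(x_k)$'' into the quantitative lower bound $r_{k'}\gtrsim\epsilon^{p_0}r_k$ --- essentially a ``thin shell'' estimate for Monge--Amp\`ere sections. Making this rigorous forces careful use of the deformation estimates in Theorem \ref{t2.4}(i)--(ii) with their exponents $p_0,p_1$ together with the normalization in Theorems \ref{t2.2}--\ref{t2.3}; the remaining bookkeeping (the per-scale count and the covering property) is comparatively routine.
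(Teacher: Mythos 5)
Your proof is correct and is essentially the argument behind \cite[Lemma 1]{CG96}, which the paper itself only cites without reproducing: a half-maximal greedy selection, the scale-window bound $c\,\epsilon^{p_0}r_{k}<r_{k'}\leq 2r_k$ obtained exactly from Theorem \ref{t2.4}(i) together with engulfing and property (ii), and a per-scale packing count after normalization via Theorem \ref{t2.3}. The step you flag as the main obstacle is in fact already supplied verbatim by Theorem \ref{t2.4}(i) with $r=1-\epsilon$, $s=1$, $t=r_k$, so no further work is needed there.
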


The next covering lemma is from \cite[Theorem 3]{CG97}. It is a variant of the Calder\'on-Zygmund decomposition and is used to derive the so-called $L^\varepsilon$ estimate, giving access to the Harnack inequality.
\begin{lemma}\label{l2.1}
If $A$ is an open, bounded set and $\theta\in(0,1)$, then there exists a family of sections $\left\{ S_{r_k}(x_k) \right\}_{k=1}^\infty$ such that
\medskip
\begin{itemize}
\item[(1)] $\{x_k\}_{k=1}^\infty\subset A$;
\medskip
\item[(2)] $A\subset\displaystyle\bigcup_{k=1}^\infty S_{r_k}(x_k)$;
\medskip
\item[(3)] $\displaystyle \frac{\left| A\cap S_{r_k}(x_k)\right|}{\left| S_{r_k}(x_k) \right|}=\theta$;
\medskip
\item[(4)] $|A| <c(\theta) \left| \displaystyle\bigcup_{k=1}^\infty S_{r_k}(x_k)\right|$, 

\end{itemize}
where $c(\theta)\in(0,1)$ is a constant depending only on $\theta$, but not on $A$ nor the family of sections.
\end{lemma}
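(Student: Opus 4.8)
The plan is to adapt the classical Calder\'on--Zygmund decomposition to the metric-measure structure of Monge--Amp\`ere sections: a stopping-time selection of radii at each point of $A$, followed by the Besicovitch-type extraction of Lemma \ref{l2.0} and a volume comparison resting on Theorem \ref{t2.4}(v). First, for a fixed $x\in A$ I would analyse $\rho_x(r):=|A\cap S_r(x)|/|S_r(x)|$. The maps $r\mapsto|S_r(x)|$ and $r\mapsto|A\cap S_r(x)|$ are nondecreasing, and both are continuous: continuity of the first is a consequence of Theorem \ref{t2.4}(v), and then $|A\cap S_{r'}(x)|-|A\cap S_r(x)|\le|S_{r'}(x)|-|S_r(x)|$ for $r<r'$ gives continuity of the second; since $|S_r(x)|>0$, the ratio $\rho_x$ is continuous. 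Because $\phi$ is a $C^2$ solution with $\det D^2\phi\ge g_->0$, it is strictly convex, so the convex sets $S_r(x)$ shrink to $\{x\}$ as $r\to0$; hence $S_r(x)\subset A$ and $\rho_x(r)=1>\theta$ for small $r$, while by Theorem \ref{t2.1} one has $|S_r(x)|\ge c^nr^n\to\infty$ with $|A\cap S_r(x)|\le|A|$, so $\rho_x(r)<\theta$ whenever $r>R_0:=(|A|/(\theta c^n))^{1/n}$. By the intermediate value theorem there is a (largest) radius $r(x)\in(0,R_0]$ with $\rho_x(r(x))=\theta$.

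Next, $\{S_{r(x)}(x)\}_{x\in A}$ covers $A$ and has radii bounded by $R_0$, so Lemma \ref{l2.0} produces a countable subfamily $\{S_{r_k}(x_k)\}_{k\ge1}$, each $(x_k,r_k)$ being some $(x,r(x))$, with $x_k\in A$, $A\subset\bigcup_kS_{r_k}(x_k)$, and $\sum_k\chi_{S_{(1-\epsilon)r_k}(x_k)}\le M\log(1/\epsilon)$ pointwise for every small $\epsilon>0$. This already yields properties (1) and (2) of the lemma, and (3) holds because $r_k=r(x_k)$ was chosen so that $\rho_{x_k}(r_k)=\theta$.

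It remains to prove (4), which is the heart of the matter. Put $U:=\bigcup_kS_{r_k}(x_k)$, so by (2) $|U|=|A|+|U\setminus A|$, and it suffices to find $\kappa(\theta)>0$ with $|U\setminus A|\ge\kappa(\theta)|A|$; then $c(\theta):=(1+\kappa(\theta))^{-1}\in(0,1)$. Pick $\epsilon>0$ small enough for Lemma \ref{l2.0}(iii) and with $n\epsilon<(1-\theta)/2$; by Theorem \ref{t2.4}(v), $|S_{(1-\epsilon)r_k}(x_k)|\ge(1-n\epsilon)|S_{r_k}(x_k)|$, so using (3),
$$
|S_{(1-\epsilon)r_k}(x_k)\setminus A|\ge|S_{(1-\epsilon)r_k}(x_k)|-\theta|S_{r_k}(x_k)|\ge(1-n\epsilon-\theta)|S_{r_k}(x_k)|\ge\tfrac{1-\theta}{2}|S_{r_k}(x_k)|.
$$
Integrating the overlap bound over $\R^n\setminus A$ gives $\sum_k|S_{(1-\epsilon)r_k}(x_k)\setminus A|\le M\log(1/\epsilon)\,|U\setminus A|$, while (2) and (3) give $|A|\le\sum_k|A\cap S_{r_k}(x_k)|=\theta\sum_k|S_{r_k}(x_k)|$. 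Combining these,
$$
|U\setminus A|\ge\frac{1}{M\log(1/\epsilon)}\sum_k|S_{(1-\epsilon)r_k}(x_k)\setminus A|\ge\frac{1-\theta}{2M\log(1/\epsilon)}\sum_k|S_{r_k}(x_k)|\ge\frac{1-\theta}{2\theta M\log(1/\epsilon)}|A|,
$$
so $\kappa(\theta)=\frac{1-\theta}{2\theta M\log(1/\epsilon)}$, with $\epsilon$ depending only on $n$ and $\theta$, as needed.

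The genuinely delicate step is the last one. The naive estimate $|A|\le\theta\sum_k|S_{r_k}(x_k)|$ followed by controlling $\sum_k|S_{r_k}(x_k)|$ by a multiple of $|U|$ loses the overlap factor $M\log(1/\epsilon)$ coming from Lemma \ref{l2.0}(iii), and hence only gives $c(\theta)<1$ for $\theta$ small. The way around this is to bound $|U\setminus A|$ from below, passing to the $(1-\epsilon)$-dilated sections for an $\epsilon$ calibrated to $\theta$ through Theorem \ref{t2.4}(v) — so that a fixed fraction $\tfrac{1-\theta}{2}$ of each dilated section still lies outside $A$ — and then recovering the single quantity $|U\setminus A|$ from the sum of these good parts via the bounded overlap of exactly those dilated sections. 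Continuity of $\rho_x$, the fact that sections shrink to points, and the Besicovitch covering itself are standard or already available.
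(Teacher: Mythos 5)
Your proof is correct: the stopping-time choice of $r(x)$ via continuity of $r\mapsto|A\cap S_r(x)|/|S_r(x)|$, the extraction through Lemma \ref{l2.0}, and in particular the lower bound on $|U\setminus A|$ using the $(1-\epsilon)$-dilated sections (where the bounded overlap actually holds) together with Theorem \ref{t2.4}(v) is exactly the right way to get (4) with $c(\theta)<1$ for \emph{all} $\theta$. The paper itself offers no proof here --- it simply cites \cite[Theorem 3]{CG97} --- and your argument is essentially a faithful reconstruction of that reference's proof, so there is nothing further to compare.
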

We will make use of the normalization map to normalize sections when needed. As the normalization of a section implies that all the other sections are comparable to Euclidian balls, our results are valid for generic $r$, which comes at the price of constants, depending upon the normalization map, appearing in the estimates (see Theorem \ref{t6.2}, for example).

\subsection{Extremal operators}
Note that when $g\equiv1$, $\phi$ is a quadratic polynomial, and we are back to the case of the fractional Laplacian, studied in \cite{CS09}. We also point out that when $\lambda=\Lambda=1$ in \eqref{1.3}, we are in the framework of the fractional non-local linearized Monge-Amp\`ere equation, studied in \cite{MS17}.

Setting
$$
\delta(u,x,y):=u(x+y)+u(x-y)-2u(x),
$$
$L_{\alpha\beta}$ can be rewritten as
$$
L_{\alpha\beta}u(x)=\int_{\R^n}\delta(u,x,y)K_x^{\alpha\beta}(y)\,dy.
$$
We now define the adequate class of test functions for our purposes.
\begin{definition}
A function $\varphi$ is said to be $C^{1,1}$ at a point $x$, and we write $\varphi\in C^{1,1}(x)$, if there exist $v\in\R^n$ and $M, \eta_0>0$ such that
$$
\left| \varphi(x+y)-\varphi(x)-v\cdot y \right| \leq M |y|^2,
$$
for every $|x|<\eta_0$. A function $\varphi$ is said to be $C^{1,1}$ in a set $\Omega$, and we write $\varphi\in C^{1,1}(\Omega)$, if it is $C^{1,1}$ at every point in $\Omega$, for a uniform constant $M$.
\end{definition}

Since solutions of the Monge-Amp\`ere equation with a bounded right-hand side have quadratic growth when a section is normalized, the kernels in our framework are a deformation of a kernel comparable to that of the fractional Laplacian. Hence, throughout the paper, we will use the normalization map to make sections comparable to Euclidean balls, and then change the variables back. In this way, we reproduce several properties in our framework. However, this approach may result in the dependence of the constants appearing in the estimates on the normalization map (see the proof of Theorem \ref{t6.2} for details).

\begin{remark}\label{r2.2}
Let $u\in C^{1,1}(x)\cap L^\infty(\R^n)$, then $Iu(x)\in\R$ (see Remark 2.2 of \cite{CLU14}).
\end{remark}
\begin{definition}\label{d2.2}
Let $f$ be a bounded and continuous function in $\R^n$. A function $u:\R^n\rightarrow\R$, upper continuous in $\overline{\Omega}$, is a viscosity subsolution of the equation $Iu=f$, and we write $Iu\geq f$, if whenever $x_0\in\Omega$, $B_r(x_0)\subset\Omega$, for some $r$, and $\varphi \in C^2(\overline{B_r(x_0)})$ satisfies
$$\varphi(x_0)=u(x_0) \quad {\rm and} \quad \varphi(y)>u(y), \ \forall y\in B_r(x_0) \setminus \{x_0\},$$
then, if we let
$$
v:=
\left\{
\begin{array}{ll}
\varphi & \hbox{in } \ B_r(x_0)\\
\\
u & \hbox{in } \ \R^n\setminus B_r(x_0),
\end{array}
\right.
$$
we have $Iv(x_0)\geq f(x_0)$. 

A viscosity supersolution is defined analogously and a function is called a viscosity solution if it is both a viscosity subsolution and a viscosity supersolution.
\end{definition}

\begin{remark}\label{r2.3}
Functions which are merely $C^{1,1}$ at a contact point $x$ can be used as test functions in the definition of viscosity solution (see Lemma 4.3 in \cite{CS09}).
\end{remark}

Let $\mathcal{L}$ be the collection of linear operators $L_{\alpha\beta}$ satisfying \eqref{1.3}. We define the maximal and minimal operators (the Pucci analogs) with respect to the class $\mathcal{L}$ as
$$
M^+u(x):=\sup_{L\in\mathcal{L}}Lu(x)
$$
and
$$
M^-u(x):=\inf_{L\in\mathcal{L}}Lu(x).
$$
By definition, if $M^+u(x)<\infty$ and $M^-u(x)<\infty$, we have the simple forms
$$
M^+u(x)=(2-\sigma)\int_{\R^n}
\frac{\Lambda\delta^+-\lambda\delta^-}{v_x(y)^{\frac{n+\sigma}{2}}}\,dy
$$
and
$$
M^-u(x)=(2-\sigma)\int_{\R^n}
\frac{\lambda\delta^+-\Lambda\delta^-}{v_x(y)^{\frac{n+\sigma}{2}}}\,dy,
$$
where $\delta^+$ and $\delta^-$ are, respectively, the positive and negative parts of $\delta$. 

\begin{definition}\label{d2.3}
	The operator $I$ is called elliptic with respect to the class $\mathcal{L}$ of integro-differential operators, if
	\begin{itemize}
		\item $Iu(x)$ is well defined for all $u\in C^{1,1}(x)$, $u$ bounded;
		\item $Iu\in C(\Omega)$ once $u\in C^2(\Omega)$;
		\item  $M^-(u-v)(x)\le Iu(x)-Iv(x)\le M^+(u-v)(x)$, for any bounded functions $u$ and $v$ which are $C^{1,1}$ at $x$.
	\end{itemize}
\end{definition}

We close this section by recalling several results, the proofs of which can be derived as in \cite{CS09}. The first result says that if $u$ can be touched from above, at a point $x$, with a paraboloid, then $Iu(x)$ can be evaluated classically.
\begin{lemma}\label{l2.2}
  If $Iu\geq f$ in $\Omega$, and $\varphi\in C^2$ touches $u$ from above at a point $x\in\Omega$, then $Iu(x)$ is defined in the classical sense, and $Iu(x)\geq f(x)$.
\end{lemma}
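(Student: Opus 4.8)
\emph{Proof proposal.} The plan is to adapt the argument for Lemma~4.1 in \cite{CS09}: replace $u$, near $x$, by the regular function that touches it, apply the viscosity inequality to the resulting competitors, and then let the truncation radius shrink. Since $\varphi\in C^{2}$ touches $u$ from above at $x$, after possibly replacing $\varphi$ by $\varphi+\e|\cdot-x|^{2}$ (which keeps it $C^{2}$ and only makes the contact with $u$ strict away from $x$, without appearing in the conclusion) I may assume $\varphi(x)=u(x)$ and $\varphi>u$ on $B_{\rho}(x)\setminus\{x\}$ for some $B_{\rho}(x)\subset\Omega$. For $0<r\le\rho$ I would then set
\[
v_{r}:=\varphi\ \text{ in }B_{r}(x),\qquad v_{r}:=u\ \text{ in }\R^{n}\setminus B_{r}(x),
\]
so that $v_{r}$ is bounded, agrees with a $C^{2}$ function near $x$ (hence $v_{r}\in C^{1,1}(x)$ and $Iv_{r}(x)\in\R$ by Remark~\ref{r2.2}), and is exactly the function associated with the admissible test function $\varphi$ in Definition~\ref{d2.2}. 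Thus $Iv_{r}(x)\ge f(x)$ for every such $r$, and the whole matter reduces to proving $Iv_{r}(x)\to Iu(x)$ as $r\to0$, with $Iu(x)$ finite.

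Two analytic facts are needed, both coming from the kernel bounds \eqref{1.3} and the Monge-Amp\`ere geometry of Section~\ref{s2}: for a fixed $x$, normalizing the section $S_{\rho}(x)$ makes $v_{x}$ comparable to $|\cdot|^{2}$ near the origin, so that $\int_{\{|y|<\rho\}}|y|^{2}v_{x}(y)^{-\frac{n+\sigma}{2}}\,dy<\infty$, while the volume growth of sections (Theorem~\ref{t2.1}) gives $\int_{\{|y|\ge\rho\}}v_{x}(y)^{-\frac{n+\sigma}{2}}\,dy<\infty$. First, since $\varphi\ge u$ with equality at $x$, one has $\delta(u,x,y)\le\delta(\varphi,x,y)\le C_{\varphi}|y|^{2}$ for $|y|<\rho$ and $\delta(u,x,y)\le4\|u\|_{\infty}$ otherwise, so $\delta(u,x,\cdot)^{+}$ is integrable against every kernel in $\mathcal L$ and the positive part of each $L_{\alpha\beta}u(x)$ is finite. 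Secondly, the negative part is finite too: were $\int\delta(u,x,y)^{-}v_{x}(y)^{-\frac{n+\sigma}{2}}\,dy=\infty$, then, the kernels being pointwise comparable, $\int_{\{|y|\ge r\}}\delta(u,x,y)^{-}K_{x}^{\alpha\beta}(y)\,dy\to\infty$ uniformly in $\alpha,\beta$; as $\int_{\{|y|<r\}}\delta(\varphi,x,y)K_{x}^{\alpha\beta}(y)\,dy\to0$ and the positive part stays bounded, this forces $L_{\alpha\beta}v_{r}(x)\to-\infty$ for all $\alpha,\beta$, hence $Iv_{r}(x)\to-\infty$, contradicting $Iv_{r}(x)\ge f(x)\ge-\|f\|_{\infty}$. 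Therefore $L_{\alpha\beta}u(x)\in\R$ for all $\alpha,\beta$ and $Iu(x)$ is defined classically.

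For the limit, using $\delta(v_{r},x,y)-\delta(u,x,y)=\big(\delta(\varphi,x,y)-\delta(u,x,y)\big)\chi_{\{|y|<r\}}\ge0$ and the upper bound in \eqref{1.3}, I would estimate
\[
0\le L_{\alpha\beta}v_{r}(x)-L_{\alpha\beta}u(x)\le(2-\sigma)\Lambda\int_{\{|y|<r\}}\big(C_{\varphi}|y|^{2}+\delta(u,x,y)^{-}\big)v_{x}(y)^{-\frac{n+\sigma}{2}}\,dy=:\eta_{r},
\]
where $\eta_{r}\to0$ as $r\to0$ by dominated convergence (the integrand is integrable on $\{|y|<\rho\}$ by the previous step) and, crucially, $\eta_{r}$ is independent of $\alpha$ and $\beta$. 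This uniformity lets the limit pass through $\inf_{\alpha}\sup_{\beta}$, giving $|Iv_{r}(x)-Iu(x)|\le\eta_{r}\to0$; hence $Iu(x)=\lim_{r\to0}Iv_{r}(x)\ge f(x)$.

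The step I expect to be delicate is precisely the kernel bookkeeping: in \cite{CS09} it is immediate because $v_{x}(y)=|y|^{2}$, whereas here sections can be arbitrarily eccentric, so one must go through the normalization map to see that $v_{x}(y)^{-(n+\sigma)/2}$ plays, near the origin and at infinity, the role of $|y|^{-(n+\sigma)}$ — which is where Theorem~\ref{t2.1} and the section estimates of Section~\ref{s2} enter, and also why the resulting constants depend on $x$ through its normalization, harmlessly since the statement is pointwise.
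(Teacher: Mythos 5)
Your proof is correct and follows exactly the route the paper intends: the paper gives no argument of its own for this lemma, deferring to \cite{CS09}, and your proposal is precisely that argument (the truncated competitors $v_r$, finiteness of the positive and negative parts of $\delta(u,x,\cdot)$ against the kernels, and a dominated-convergence bound $\eta_r$ uniform in $\alpha,\beta$ so the limit passes through $\inf\sup$), with the only genuinely new ingredient --- integrability of $v_x(y)^{-(n+\sigma)/2}$ against $|y|^2$ near the origin and against $1$ at infinity --- correctly supplied from the section geometry of Section~\ref{s2}. No gaps.
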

Another important result is the continuity of $Iv$ in $\Omega$, if $v\in C^{1,1}(\Omega)$.
\begin{lemma}\label{l2.3}
  If $v$ is a bounded function in $\R^n$, which is $C^{1,1}$ in some open set $\Omega$, then $Iv\in C(\Omega)$.
\end{lemma}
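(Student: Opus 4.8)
The plan is to reduce the statement for $C^{1,1}$ functions to the one for $C^{2}$ functions --- for which continuity of $Iv$ is built into the ellipticity of $I$ (second item of Definition \ref{d2.3}) --- by mollification, and to control the discrepancy through the extremal operators. Let $\eta_\e$ be a standard mollifier and put $v_\e:=v*\eta_\e$. Then $v_\e\in C^\infty(\R^n)$ is bounded, with $\|v_\e\|_{L^\infty}\le\|v\|_{L^\infty}$; moreover $v_\e\to v$ both in $L^1_{\loc}(\R^n)$ and locally uniformly in $\Omega$ (since $v$ is continuous there), and $v_\e$ is $C^{1,1}$ in $\Omega_\e:=\{x\,:\,\dist(x,\partial\Omega)>\e\}$ with the same constant as $v$. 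Since $I$ is elliptic and $v_\e$ is a bounded function of class $C^2(\Omega)$, we have $Iv_\e\in C(\Omega)$, so it suffices to prove that $Iv_\e\to Iv$ locally uniformly in $\Omega$.

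Fix a compact set $K\subset\Omega$ and a small number $\rho>0$. For $x\in K$ and $\e$ small enough, $v$ and $v_\e$ are bounded and $C^{1,1}$ at $x$, so the third item of Definition \ref{d2.3}, applied with $w_\e:=v-v_\e$, gives
$$
|Iv(x)-Iv_\e(x)|\le\max\{M^{+}w_\e(x),\,-M^{-}w_\e(x)\}\le(2-\sigma)\Lambda\int_{\R^n}\frac{|\delta(w_\e,x,y)|}{v_x(y)^{\frac{n+\sigma}{2}}}\,dy .
$$
I would estimate the integral by splitting it over a small section $S_\rho(x)$ --- a neighbourhood of the singularity of the kernel, which for $\rho$ small lies well inside $\Omega_\e$ --- and over its complement. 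On $S_\rho(x)$ the uniform $C^{1,1}$ bound gives $|\delta(w_\e,x,y)|\le C|y|^2$; since $\phi\in C^2$ with $\det D^2\phi\ge g_->0$, its Hessian is uniformly non-degenerate on a neighbourhood of $K$, so a linear change of variables reduces the resulting integral to the fractional-Laplacian computation $\int_{B_{C\rho}}|y|^{2-n-\sigma}\,dy\lesssim\frac{\rho^{2-\sigma}}{2-\sigma}$, yielding a contribution $\le C(n,\phi,K)\,\rho^{2-\sigma}$ uniformly in $x\in K$ --- here the factor $2-\sigma$ in \eqref{1.3} is precisely what cancels the blow-up as $\sigma\uparrow 2$. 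On the complement I would use $|\delta(w_\e,x,y)|\le|w_\e(x+y)|+|w_\e(x-y)|+2|w_\e(x)|$: the term $2|w_\e(x)|$ contributes at most $C\,\|w_\e\|_{L^\infty(K)}\,\rho^{-\sigma}$ (using $|S_r(x)|\le C\,r^n$ from Theorem \ref{t2.1}), while for the other two terms a change of variables recasts the integral as one of the form $\int|w_\e(z)|\,v_x(z)^{-\frac{n+\sigma}{2}}\,dz$ over the exterior of a section, which I split once more over $\{|z|<N\}$ and $\{|z|\ge N\}$: on the first piece the kernel is bounded, so this is $\le C_\rho\,\|w_\e\|_{L^1(B_N)}$, and on the second it is $\le C\,\|v\|_{L^\infty}\,\omega_K(N)$, where $\omega_K(N):=\sup_{x\in K}\int_{\{|z|\ge N\}}v_x(z)^{-\frac{n+\sigma}{2}}\,dz\to0$ as $N\to\infty$ (since $\phi$ grows super-linearly --- a consequence of $\det D^2\phi\ge g_->0$ --- the ball $B_N$ eventually contains $S_R(x)$ for every $x\in K$ and every fixed $R$, which together with the volume bound makes the weights $v_x(\cdot)^{-(n+\sigma)/2}$ uniformly tight over $x\in K$). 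Choosing first $\rho$ small, then $N$ large, and finally letting $\e\to0$ --- so that $\|w_\e\|_{L^\infty(K)}\to0$ and $\|w_\e\|_{L^1(B_N)}\to0$ --- we obtain $\sup_K|Iv-Iv_\e|\to0$; since $K$ was arbitrary, $Iv\in C(\Omega)$.

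I expect the estimate on the complement of $S_\rho(x)$ to be the main obstacle: the error $w_\e=v-v_\e$ tends to zero uniformly only on compact subsets of $\Omega$ and, globally on $\R^n$, only in $L^1_{\loc}$, so it cannot simply be controlled in the sup-norm against the integrable but slowly decaying tail of the kernel. The two-scale decomposition $\{|z|<N\}\cup\{|z|\ge N\}$ is what marries the $L^1_{\loc}$-convergence of $w_\e$ with the decay of $v_x(z)^{-(n+\sigma)/2}$, the latter quantified uniformly over the compact $K$ by the section volume bound of Theorem \ref{t2.1}. (One cannot mimic the proof of \cite{CS09} via translates $\tau_h v$, because the deformation of the kernels makes the class $\mathcal{L}$ fail to be translation invariant; mollification is the natural substitute.)
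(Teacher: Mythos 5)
Your proof is correct and follows essentially the route of Lemma 4.2 in \cite{CS09}, to which the paper defers for this statement: mollify, invoke the $C^2$-continuity built into Definition \ref{d2.3}, and control $Iv-Iv_\e$ through the extremal operators with a near/far splitting, where the factor $(2-\sigma)$ in \eqref{1.3} tames the singular part over a small section and the tails of $v_x(\cdot)^{-\frac{n+\sigma}{2}}$, made uniformly integrable over compacts via Theorems \ref{t2.1} and \ref{t2.2}, absorb the merely $L^1_{\loc}$-convergent error; your adaptations to the non-translation-invariant, section-driven geometry are exactly the ones needed. The only point to make explicit is that the reduction to the $C^2$ case is legitimate because the lemma is to be read for $I$ already elliptic in the sense of Definition \ref{d2.3} (for kernels constrained only by \eqref{1.3}, continuity of $Iu$ for smooth $u$ is an axiom, not a consequence, while for the concrete operators $M^\pm$ it follows directly from their explicit integral form and the $C^1$ dependence of $v_x(y)$ on $x$), and that the uniform non-degeneracy of $D^2\phi$ on a neighbourhood of $K$ uses the $C^2$ upper bound on $\phi$ together with $\det D^2\phi\ge g_-$, not the determinant bound alone.
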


Although in general one can not compare two solutions at a given point, since they may not have the required behaviour simultaneously, it is possible to show (see \cite[Section 5]{CS09}) that the difference of a subsolution of the maximal operator and a supersolution of the minimal operator is a subsolution of the maximal operator.  
\begin{lemma}\label{l2.4}
  If $\Omega$ is an open, bounded set, and $u$ and $v$ are bounded functions in $\R^n$ such that
  \begin{enumerate}
    \item $u$ is upper-semicontinuous, $v$ is lower-semicontinuous in $\overline{\Omega}$;
    \item $Iu\geq f$, $Iv\leq g$ in the viscosity sense in $\Omega$ with $f$, $g$ continuous,
  \end{enumerate}
  then
  $$
  M^+(u-v)\geq f-g\,\hbox{ in }\,\Omega
  $$
  in the viscosity sense.
\end{lemma}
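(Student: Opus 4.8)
The plan is to follow the template of \cite[Section~5]{CS09}, the only point requiring genuinely new care being the $x$-dependence of the kernels, which in our setting enters exclusively through $v_x$ in \eqref{1.3}.

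\textbf{Step 1: reduction to semiconvex $u$ and semiconcave $v$.}
I would first regularise: let $u^{\e}$ be the sup-convolution and $v_{\e}$ the inf-convolution of $u$ and $v$, at scale $\e$. Then $\|u^{\e}\|_{\infty}\le\|u\|_{\infty}$, $\|v_{\e}\|_{\infty}\le\|v\|_{\infty}$, $u^{\e}$ is semiconvex and $v_{\e}$ semiconcave on $\R^{n}$, and $u^{\e}\downarrow u$, $v_{\e}\uparrow v$ locally uniformly (since $u$ is u.s.c., $v$ is l.s.c., and both are bounded). Arguing as in \cite{CS09}, $u^{\e}$ is a viscosity subsolution of $Iu^{\e}\ge f_{\e}$ and $v_{\e}$ a viscosity supersolution of $Iv_{\e}\le g_{\e}$ on a slightly shrunk subdomain $\Omega_{\e}\uparrow\Omega$, with $f_{\e}\to f$ and $g_{\e}\to g$ locally uniformly. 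The one modification is the following: when the sup-convolution transplants a test function from a point $x_{0}$ to a point $x_{0}-z_{0}$, the admissible operators seen at these two points differ, since the medium deforms. However, every admissible kernel can be written as $(2-\sigma)\,a(y)\,v_{x}(y)^{-(n+\sigma)/2}$ with $\lambda\le a(y)\le\Lambda$ and $a$ independent of $x$; moreover $|z_{0}|\le(\e\,\osc u)^{1/2}\to0$, the map $x\mapsto v_{x}(y)$ is uniformly continuous on compacts (as $\phi\in C^{2}$), and the kernels are integrable at infinity. Hence replacing $v_{x_{0}-z_{0}}$ by $v_{x_{0}}$ produces an error that tends to $0$ and is absorbed into $f_{\e}$.

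\textbf{Step 2: the semiconvex/semiconcave case.}
Here I would show that, when $u$ is semiconvex and $v$ semiconcave, $w:=u-v$ (bounded and u.s.c.\ in $\overline{\Omega}$) is a viscosity subsolution of $M^{+}w\ge f-g$ in $\Omega$. Let $\varphi\in C^{2}$ touch $w$ from above at $x_{0}\in\Omega$, strictly in some $B_{r}(x_{0})\subset\Omega$. Since $v$ is semiconcave there is a paraboloid $\Pi$ with $\Pi\ge v$ in $B_{r}(x_{0})$ and $\Pi(x_{0})=v(x_{0})$, so $\varphi+\Pi\in C^{2}$ touches $u=w+v$ from above at $x_{0}$; together with the semiconvexity of $u$ this forces $u\in C^{1,1}(x_{0})$, hence $Iu(x_{0})$ is classically defined and, by Lemma \ref{l2.2}, $Iu(x_{0})\ge f(x_{0})$. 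Symmetrically, the semiconvexity of $u$ supplies a paraboloid $Q\le u$ in $B_{r}(x_{0})$ with $Q(x_{0})=u(x_{0})$, so $Q-\varphi\in C^{2}$ touches $v=u-w$ from below at $x_{0}$, whence $v\in C^{1,1}(x_{0})$, $Iv(x_{0})$ is classically defined and (by the supersolution counterpart of Lemma \ref{l2.2}, proved analogously) $Iv(x_{0})\le g(x_{0})$. Since $u,v$ are bounded and $C^{1,1}$ at $x_{0}$, the ellipticity of $I$ (Definition \ref{d2.3}) gives
$$
f(x_{0})-g(x_{0})\ \le\ Iu(x_{0})-Iv(x_{0})\ \le\ M^{+}(u-v)(x_{0})\ =\ M^{+}w(x_{0}).
$$
Finally, let $\tilde w$ equal $\varphi$ in $B_{r}(x_{0})$ and $w$ outside. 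Then $\tilde w\ge w$ with $\tilde w(x_{0})=w(x_{0})$, so $\delta(\tilde w,x_{0},y)\ge\delta(w,x_{0},y)$ for all $y$; since $t\mapsto\Lambda t^{+}-\lambda t^{-}$ is nondecreasing and the kernels are positive, $M^{+}\tilde w(x_{0})\ge M^{+}w(x_{0})\ge f(x_{0})-g(x_{0})$, which is precisely the required viscosity inequality at $x_{0}$.

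\textbf{Step 3: passing to the limit, and the main obstacle.}
Applying Step 2 to $u^{\e}$ and $v_{\e}$, the function $w^{\e}:=u^{\e}-v_{\e}$ satisfies $M^{+}w^{\e}\ge f_{\e}-g_{\e}$ in $\Omega_{\e}$, with $w^{\e}\to w$ locally uniformly and $\|w^{\e}\|_{\infty}\le\|u\|_{\infty}+\|v\|_{\infty}$. To recover $M^{+}w\ge f-g$ in $\Omega$ I would invoke stability of viscosity subsolutions under local uniform convergence, as in \cite{CS09}: given $\varphi\in C^{2}$ strictly touching $w$ from above at $x_{0}$, sliding produces, for small $\e$, contact points $x_{\e}\to x_{0}$ for $w^{\e}$; evaluating the conclusion of Step 2 at $x_{\e}$ and letting $\e\to0$ — using the uniform bound and dominated convergence for the contribution of large $|y|$ (the kernels being integrable there), uniform convergence near $x_{0}$, the continuity of $v_{x}$ in $x$, and the continuity of $f-g$ — yields $M^{+}\tilde w(x_{0})\ge f(x_{0})-g(x_{0})$. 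Everything except Step 1 is a transcription of \cite{CS09}; the substantive difficulty is exactly that the kernels deform in space, so the sup/inf-convolution mechanism moves between the operator classes attached to different base points, and what makes it survive is the structural fact that the deformation is fully encoded in $v_{x}$, a $C^{2}$-controlled — hence uniformly continuous — function of $x$.
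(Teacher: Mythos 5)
Your argument is essentially the proof the paper intends: Lemma \ref{l2.4} is stated without proof and explicitly deferred to \cite[Section 5]{CS09}, and your three steps (sup/inf-convolution, the semiconvex/semiconcave case via $C^{1,1}$ touching points plus the ellipticity inequality of Definition \ref{d2.3}, and stability under local uniform convergence) are a faithful transcription of that proof, with the $x$-dependence of the kernels correctly identified as the only genuinely new point. One caution on Step 1: what has to be compared are the admissible \emph{classes} at $x_0$ and at $x_0-z_0$ --- a kernel satisfying \eqref{1.3} at $x_0-z_0$ becomes admissible at $x_0$, up to a multiplicative factor $\left(v_{x_0-z_0}/v_{x_0}\right)^{(n+\sigma)/2}$ that is uniformly close to $1$ by the $C^2$ regularity and local nondegeneracy of $\phi$, and the resulting error is controlled by $\int|\delta|\,K\,dy<\infty$ for the glued test function --- rather than a single kernel of the form $(2-\sigma)a(y)v_x(y)^{-(n+\sigma)/2}$ with $a$ independent of $x$, which \eqref{1.3} does not guarantee; this does not change the outcome.
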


As in \cite{CS09}, Lemma \ref{l2.4} leads to the following comparison principle. 
\begin{theorem}\label{t2.5}
	Let $I$ be elliptic with respect to a class $\mathcal{L}$, $\Omega\subset\R^n$ be a bounded open set, $u$, $v$ be bounded functions in $\R^n$ such that $u$ is upper semi-continuous in $\overline{\Omega}$ and $v$ is lower semi-continuous in $\overline{\Omega}$. If $Iu\ge f$ and $Iv\le f$ in $\Omega$, where $f$ is continuous, and $u\le v$ in $\R^n\setminus\Omega$, then $u\le v$ in $\Omega$.
\end{theorem}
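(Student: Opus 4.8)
The plan is to reduce the comparison principle to a maximum principle for the extremal operator $M^+$, in the spirit of \cite[Section 5]{CS09}. Set $w:=u-v$. Since $u$ is upper semicontinuous and $v$ lower semicontinuous on $\overline{\Omega}$, the function $w$ is upper semicontinuous there and bounded on $\R^n$, and by hypothesis $w\le0$ on $\R^n\setminus\Omega$. Applying Lemma \ref{l2.4} with $g=f$ yields $M^+w\ge f-f=0$ in $\Omega$ in the viscosity sense. Hence it suffices to prove the following maximum principle: if $w$ is bounded on $\R^n$, upper semicontinuous on $\overline{\Omega}$, satisfies $M^+w\ge0$ in $\Omega$ in the viscosity sense and $w\le0$ on $\R^n\setminus\Omega$, then $w\le0$ in $\Omega$. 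Applied to our $w$, this is precisely $u\le v$ in $\Omega$.

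To establish the maximum principle I would argue by contradiction, assuming $\max_{\overline{\Omega}}w>0$. The maximum is attained because $w$ is upper semicontinuous on the compact set $\overline{\Omega}$; since $w\le0$ on $\partial\Omega\subset\R^n\setminus\Omega$, it is attained at some interior point $x_0\in\Omega$ with $w(x_0)>0$, and in fact $w(x)\le w(x_0)$ for every $x\in\R^n$ (on $\overline{\Omega}$ by the choice of $x_0$, and outside because $w\le0<w(x_0)$). Fix $\rho>0$ with $\overline{B_\rho(x_0)}\subset\Omega$ and, for $\e>0$, let $\varphi(x):=w(x_0)+\e|x-x_0|^2$; then $\varphi(x_0)=w(x_0)$ and $\varphi(x)>w(x_0)\ge w(x)$ for $x\in B_\rho(x_0)\setminus\{x_0\}$, so $\varphi$ touches $w$ strictly from above at $x_0$ in $B_\rho(x_0)$. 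Let $\tilde w:=\varphi$ in $B_\rho(x_0)$ and $\tilde w:=w$ in $\R^n\setminus B_\rho(x_0)$. Then $\tilde w$ is bounded and $C^{1,1}$ at $x_0$, so $M^+\tilde w(x_0)$ is a finite real number by Remark \ref{r2.2}, and $M^+\tilde w(x_0)\ge0$ by the definition of viscosity subsolution.

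The contradiction comes from an upper bound on $M^+\tilde w(x_0)$. The second difference $\delta(\tilde w,x_0,y)=\tilde w(x_0+y)+\tilde w(x_0-y)-2\tilde w(x_0)$ equals $2\e|y|^2$ when $x_0\pm y\in B_\rho(x_0)$ and is $\le0$ for every other $y$, since $\tilde w\le w(x_0)=\tilde w(x_0)$ on $\R^n$; moreover, for $y$ outside a suitable section $S_R(x_0)$ containing $\overline{\Omega}-x_0$ (hence containing $B_\rho(0)$), both $x_0\pm y$ lie outside $B_\rho(x_0)$ and $x_0+y$ lies outside $\overline{\Omega}$, so that $\delta(\tilde w,x_0,y)=w(x_0+y)+w(x_0-y)-2w(x_0)\le-w(x_0)<0$. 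Feeding this into the expression for $M^+$, the positive part of the integrand is supported in $\{x_0\pm y\in B_\rho(x_0)\}$ and equals $2\e|y|^2$ there, so its contribution is bounded by $\e$ times a finite constant — finiteness being exactly the integrability near the singularity encoded in Remark \ref{r2.2} — while the negative part contributes at most
$$
-(2-\sigma)\lambda\,w(x_0)\int_{\R^n\setminus S_R(x_0)}\frac{dy}{v_{x_0}(y)^{\frac{n+\sigma}{2}}},
$$
a strictly negative number independent of $\e$: decomposing $\R^n\setminus S_R(x_0)$ into the dyadic shells $S_{2^{k+1}R}(x_0)\setminus S_{2^kR}(x_0)$, using $v_{x_0}\ge(2^kR)^2$ on the $k$-th shell together with $|S_{2^{k+1}R}(x_0)|\le C(2^{k+1}R)^n$ from Theorem \ref{t2.1}, the $k$-th term is bounded by $C'(2^kR)^{-\sigma}$ and the series converges since $\sigma>0$. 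Choosing $\e$ small enough makes $M^+\tilde w(x_0)<0$, contradicting $M^+\tilde w(x_0)\ge0$; therefore $w\le0$ in $\Omega$ and the theorem follows.

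The only genuinely nonlocal ingredient, and the point I expect to demand care, is this tail estimate: one must ensure the finiteness of $\int_{\R^n\setminus S_R(x_0)}v_{x_0}^{-(n+\sigma)/2}$, which is precisely where the Monge-Amp\`ere geometry enters — through the volume growth of sections (Theorem \ref{t2.1}) — and one must exhibit a set of positive (here infinite) measure carrying positive kernel weight on which the second difference of $\tilde w$ at $x_0$ is strictly negative, namely a translate of $\R^n\setminus\Omega$, where $w\le0<w(x_0)$. With these in place, the remainder is the classical local-type maximum principle and runs as in \cite[Section 5]{CS09}.
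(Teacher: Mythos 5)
Your proof is correct and follows the route the paper itself intends: the paper's entire ``proof'' is the remark that, as in \cite{CS09}, Lemma \ref{l2.4} reduces the statement to the maximum principle for $M^+$ applied to $w=u-v$, which is precisely your reduction. Your completion of that maximum principle --- touching the positive interior maximum from above by an $\e$-paraboloid, bounding the positive part of $\delta$ by $O(\e)$ via Remark \ref{r2.2}, and exhibiting a strictly negative, $\e$-independent tail contribution whose convergence follows from the volume growth of sections (Theorem \ref{t2.1}) --- is the standard argument of \cite[Section 5]{CS09} correctly adapted to the deforming kernels.
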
	
	
The existence of a solution for the Dirichlet problem then follows from the comparison principle, by constructing suitable barriers and using Perron's method (see \cite{I89}).

\section{The ABP estimate}\label{s3}
In this section we prove a version of the ABP estimate, which will give access to the regularity theory. We start with the following proposition, which then allows one to properly define a suitable concave envelope for functions.
\begin{proposition}\label{p3.1}
Let $\gamma>1$ be the engulfing constant from Theorem \ref{t2.2}. If $x\in S_1(0)$, then there exists a constant $\tau>\gamma$ such that whenever either $x+y$ or $x-y$ is not in $S_\tau(0)$, then both of them are not in $S_1(0)$.
\end{proposition}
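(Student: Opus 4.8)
\emph{Approach.} I would first recast the statement as a purely geometric containment. The only non-vacuous instance of the claimed implication is the one in which one of $x+y$, $x-y$ lies in $S_1(0)$; calling that one $z$, the other equals $2x-z$. Hence the Proposition is equivalent to the assertion that there is a universal constant $\tau>\gamma$ with
$$
2x-S_1(0)\subseteq S_\tau(0)\qquad\text{for every }x\in S_1(0),
$$
since any $\tau\ge1$ already gives $S_1(0)\subseteq S_\tau(0)$, so that both $z$ and $2x-z$ then lie in $S_\tau(0)$. The plan is to establish this containment.

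The key observation is that reflection through a point $x$ is harmless when it takes place inside a section \emph{centred at $x$}, after normalisation. So I would fix $x\in S_1(0)$, use the engulfing property (Theorem \ref{t2.2}) to write $S_1(0)\subseteq S_\gamma(x)$, and then choose once and for all a universal constant $\theta\ge1$ (depending only on the constants $K_1,K_2,\varepsilon$ of Theorem \ref{t2.3}, through the requirement $K_1\theta^{-\varepsilon}\le K_2$) and let $T$ be an affine map normalising the section $S_{\theta\gamma}(x)$. Applying Theorem \ref{t2.3} with base point $y=x$ and heights $\gamma\le\theta\gamma$ traps the inner section in a ball \emph{centred at $Tx$}, namely $T(S_\gamma(x))\subseteq B_{K_1\theta^{-\varepsilon}}(Tx)$, while the equal-height case $r=R=\theta\gamma$ gives $B_{K_2}(Tx)\subseteq T(S_{\theta\gamma}(x))$. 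Now for any $z\in S_1(0)\subseteq S_\gamma(x)$ one has $|Tz-Tx|\le K_1\theta^{-\varepsilon}\le K_2$, and since $T$ is affine, $|T(2x-z)-Tx|=|Tx-Tz|\le K_2$, so $T(2x-z)\in B_{K_2}(Tx)\subseteq T(S_{\theta\gamma}(x))$, i.e. $2x-z\in S_{\theta\gamma}(x)$. This yields $2x-S_1(0)\subseteq S_{\theta\gamma}(x)$.

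It then remains to come back to sections centred at $0$. Since $0\in S_1(0)\subseteq S_\gamma(x)\subseteq S_{\theta\gamma}(x)$, a second use of engulfing (Theorem \ref{t2.2}) gives $S_{\theta\gamma}(x)\subseteq S_{\theta\gamma^2}(0)$, and therefore $2x-S_1(0)\subseteq S_{\theta\gamma^2}(0)$. Taking $\tau:=\theta\gamma^2$ — which is $>\gamma$ since $\theta\ge1$ and $\gamma>1$ — completes the proof.

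I expect the only genuinely substantive step to be the one extracted from Theorem \ref{t2.3} with $y=x$: the fact that a Monge-Amp\`ere section, once normalised, is comparable to a ball \emph{centred at its own centre}, so that reflection through that centre costs no more than a universal enlargement of heights. Everything else is bookkeeping with the engulfing property. (Alternatively, one of the two invocations of engulfing can be replaced by Theorem \ref{t2.4}(i) with base point $0$, which directly gives $S_{C_0 t(1-1/t)^{p_0}}(x)\subseteq S_t(0)$ for $x\in S_1(0)$ and $t$ large enough, at the cost of a less transparent constant.)
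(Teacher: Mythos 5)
Your proof is correct, and it reaches the conclusion by a genuinely different route than the paper. The paper argues by contradiction: it assumes $x-y\in S_1(0)$ and $x+y\notin S_\tau(0)$, normalizes the \emph{outer} section $S_\tau(0)$, applies Theorem \ref{t2.3} to the engulfing section $S_\gamma(x-y)$ (which contains $0$, $x$ and $x-y$), and then pieces together bounds on $|Ty|$, $|Tx-Ty|$ and hence $|Tx+Ty|$ by the triangle inequality until they contradict $|Tx+Ty|\geq\alpha_n$; the admissible $\tau$ emerges at the end as $\tau>\gamma(3K_1/\alpha_n)^{1/\varepsilon}$. You instead give a direct proof of the (correctly identified) equivalent containment $2x-S_1(0)\subseteq S_\tau(0)$, normalizing a section \emph{centred at the reflection point $x$}, so that the affine identity $T(2x-z)-Tx=Tx-Tz$ makes the point reflection exactly norm-preserving in the normalized coordinates; the two inclusions of Theorem \ref{t2.3} (with $y=x$, once with $r=\gamma$ and once with $r=R=\theta\gamma$) then close the argument, and a second application of engulfing recentres at the origin, yielding the explicit constant $\tau=\theta\gamma^2$. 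Your version is arguably cleaner in that it isolates the single geometric fact doing the work --- a normalized section is comparable to a ball centred at its own centre, so reflection through that centre costs only a universal enlargement of height --- whereas the paper's triangle-inequality bookkeeping obscures this; the paper's version has the mild advantage of never needing the lower inclusion $B_{K_2 r/R}(Ty)\subseteq T(S_r(y))$ of Theorem \ref{t2.3} nor the auxiliary constant $\theta$. All the steps you use (existence of $\theta\geq 1$ with $K_1\theta^{-\varepsilon}\leq K_2$, invertibility of the normalization map, uniformity of $\tau$ in $x$) check out.
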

\begin{proof}
Let $x+y\notin S_\tau(0)$, for some $\tau>\gamma$ to be chosen later. We want to show that $x-y\notin S_1(0)$. We argue by contradiction and assume that $x-y\in S_1(0)$. By the engulfing property, Theorem \ref{t2.2}, this implies that $S_1(0)\subset S_\gamma(x-y)$, and therefore
$$
S_\gamma(x-y)\cap S_\tau(0)\neq\emptyset,
$$
since they both contain $S_1(0)$. If $T$ is an affine transformation that normalizes the section $S_\tau(0)$, i.e.,
$$
B_{\alpha_n}\subset T(S_\tau(0))\subset B_1,
$$
then from Theorem \ref{t2.3} we obtain that
$$
T(S_\gamma(x-y))\subset B_{K_1(\frac{\gamma}{\tau})^\varepsilon}(Tx-Ty),
$$
for some positive constants $K_1$ and $\varepsilon$. Since $0\in S_\gamma(x-y)$, the above inclusion then gives
$$
|Tx-Ty|<K_1\left(\frac{\gamma}{\tau}\right)^\varepsilon.
$$
Similarly, since also $x\in S_\gamma(x-y)$, then the above inclusion provides
$$
|Ty|<K_1\left(\frac{\gamma}{\tau}\right)^\varepsilon,
$$
hence
$$
|Tx|<2K_1\left(\frac{\gamma}{\tau}\right)^\varepsilon.
$$
Combining the last two inequalities, we obtain
$$
|Tx+Ty|<3K_1\left(\frac{\gamma}{\tau}\right)^\varepsilon.
$$
On the other hand, since $x+y\notin S_\tau(0)$, then $Tx+Ty\notin B_{\alpha_n}$, i.e., 
$$|Tx+Ty|\geq\alpha_n.$$
By choosing $\tau>\gamma\left(\frac{3K_1}{\alpha_n}\right)^{1/\varepsilon}$, we get a contradiction. The other case is proved analogously.
\end{proof}
Hereafter, we will assume that $\tau>\gamma$ is as in Proposition \ref{p3.1}. Whenever the center of a section is the origin, we will omit it, i.e., we will write $S_r$ instead of $S_r(0)$.

Let $u$ be a non-positive function outside the section $S_1$. The concave envelope of $u$ is defined by
$$\Gamma(x):=
\left\{
\begin{array}{ll}
\min \left\{ p(x) \, : \, p\hbox{ is a plane and }p\geq u^+\hbox{ in }S_\tau \right\}  & \hbox{ in }S_\tau \\
0 & \hbox{ in }\R^n\setminus S_\tau.
\end{array}
\right.
$$
\begin{lemma}\label{l3.1}
Let $u \leq 0$ in $\R^n\setminus S_1$ and $\Gamma$ be its concave envelope. If $M^+u(x)\geq-f(x)$ in $S_1$, then there is a constant $C_0>0$, depending only on $\lambda$ and $n$ (but not on $\sigma$), such that, for any $x\in\{u=\Gamma\}\cap S_1$ and any $M>0$, there exists $k$ such that
$$
|W_k(x)|\leq C_0\frac{f(x)}{M}|R_k(x)|,
$$  
where $R_k(x)=S_{r_k}(x)\setminus S_{r_{k+1}}(x)$, $r_k=2^{-1/(2-\sigma)-k}$ and
$$
W_k(x):=R_k(x)\cap\left\{y\,:\, u(y)<u(x)+\nabla\Gamma(x)\cdot (y-x)-Mr_k^2\right\}.
$$
Here $\nabla\Gamma$ stands for any element of the superdifferential of $\Gamma$ at $x$, which will coincide with its gradient when $\Gamma$ is differentiable. 
\end{lemma}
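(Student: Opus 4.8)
The plan is to adapt the non-local ABP argument of \cite{CS09} to the Monge--Amp\`ere geometry. Two features are new: Proposition~\ref{p3.1} will make the supporting plane of $\Gamma$ at a contact point lie above $u$ on \emph{all} of $\R^n$, so that only the lower kernel bound in \eqref{1.3} ever contributes and $\Lambda$ never enters the final constant; and the balancing and engulfing of sections (Theorems~\ref{t2.1} and \ref{t2.2}) will replace the Euclidean covering. The reason $C_0$ does not depend on $\sigma$ is that the factor $2-\sigma$ coming from the normalisation in \eqref{1.3} gets cancelled by a factor $\tfrac1{2-\sigma}$ produced when one sums the volumes of the dyadic annuli $R_k(x)$.

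First I would use the contact point. Fix $x\in\{u=\Gamma\}\cap S_1$; since $x$ is interior to $S_\tau$ and $\Gamma$ is concave there, take $p=\nabla\Gamma(x)$ to be an element of the superdifferential and set $\ell(z):=u(x)+p\cdot(z-x)$, so that $\Gamma\le\ell$ on $S_\tau$, $\ell(x)=\Gamma(x)=u(x)\ge0$, and $\ell(x+y)+\ell(x-y)=2u(x)$ for all $y$. The key preliminary is that $\delta(u,x,y)\le0$ for \emph{every} $y$: if $x\pm y\in S_\tau$ then $u(x\pm y)\le\Gamma(x\pm y)\le\ell(x\pm y)$, so $\delta(u,x,y)\le\ell(x+y)+\ell(x-y)-2u(x)=0$, and if one of $x\pm y$ falls outside $S_\tau$ then Proposition~\ref{p3.1} forces both outside $S_1$, so $u(x\pm y)\le0$ and $\delta(u,x,y)\le-2u(x)\le0$. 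Since $\ell$, perturbed by an arbitrarily small multiple of $|\cdot-x|^2$, is a $C^2$ function touching $u$ from above at $x$, Lemma~\ref{l2.2} (see also Remark~\ref{r2.3}) lets me evaluate $M^+u(x)$ classically with $M^+u(x)\ge-f(x)$; because $\delta(u,x,\cdot)\le0$, only the $\lambda$-bound in \eqref{1.3} is active, so $M^+u(x)=(2-\sigma)\lambda\int_{\R^n}v_x(y)^{-(n+\sigma)/2}\delta(u,x,y)\,dy\le 0$, and together with $M^+u(x)\ge-f(x)$ this gives
$$
(2-\sigma)\lambda\int_{\R^n}\frac{-\delta(u,x,y)}{v_x(y)^{(n+\sigma)/2}}\,dy\ \le\ f(x).
$$

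Next I would localise and decompose. Write $w:=\ell-u$, so $w\ge0$ on $S_\tau$ and $w(x)=0$. Since $r_0=2^{-1/(2-\sigma)}<1$, I would first fix $\tau$ — enlarging the one from Proposition~\ref{p3.1} if necessary, using Theorems~\ref{t2.1} and \ref{t2.2} — so that for every $x\in S_1$ both $S_{r_0}(x)$ and its reflection through $x$ lie inside $S_\tau$. Then, whenever $x+y\in S_{r_0}(x)$, also $x-y\in S_\tau$, so $u(x-y)\le\ell(x-y)$ and the affine identity for $\ell$ yields $-\delta(u,x,y)=(\ell(x)-u(x+y))+(\ell(x)-u(x-y))\ge\ell(x+y)-u(x+y)=w(x+y)\ge0$. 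Restricting the integral above to the $y$'s with $x+y\in\bigcup_{k\ge0}R_k(x)$ (allowed, since $-\delta(u,x,\cdot)\ge0$), using that on the $k$-th piece $x+y\in S_{r_k}(x)$ makes the denominator at most $r_k^{n+\sigma}$ by \eqref{2.1}, and using $w(x+y)>Mr_k^2$ for those $y$ with $x+y\in W_k(x)$, I arrive at
$$
f(x)\ \ge\ (2-\sigma)\lambda\,M\sum_{k\ge0}r_k^{2-n-\sigma}\,|W_k(x)|.
$$
If no admissible $k$ existed, i.e. $|W_k(x)|>C_0\tfrac{f(x)}{M}|R_k(x)|$ for all $k$, then (the case $f(x)=0$ being immediate from the last display) dividing by $f(x)>0$ would force $1>(2-\sigma)\lambda C_0\sum_{k\ge0}r_k^{2-n-\sigma}|R_k(x)|$; so it remains to prove the geometric estimate
$$
\sum_{k\ge0}r_k^{2-n-\sigma}\,|R_k(x)|\ \ge\ \frac{c_n}{2-\sigma},
$$
with $c_n>0$ depending only on $n$, after which $1>\lambda C_0 c_n$ is a contradiction for $C_0:=1/(\lambda c_n)=C(n)/\lambda$. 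I would establish this estimate by bounding $|S_r(x)|$ above and below by multiples of $r^n$ (Theorem~\ref{t2.1}), summing by parts in $\sum_k r_k^{2-n-\sigma}(|S_{r_k}(x)|-|S_{r_{k+1}}(x)|)$, and using $r_k^{2-\sigma}=\tfrac12 2^{-k(2-\sigma)}$.

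The step I expect to be the main obstacle is this last estimate, and more generally the care needed to keep $C_0$ free of $\sigma$ and $\Lambda$. Because the sections can be very degenerate, $|R_k(x)|=|S_{r_k}(x)|-|S_{r_{k+1}}(x)|$ cannot be bounded below annulus by annulus, and it is precisely the summation-by-parts device (together with Theorem~\ref{t2.1}) that produces the clean $\tfrac1{2-\sigma}$ cancelling the $2-\sigma$ from \eqref{1.3}. A secondary, geometric difficulty is guaranteeing that the reflection of the small section $S_{r_0}(x)$ through its centre lies inside $S_\tau$ for all $x\in S_1$: this is what makes the local bound $-\delta(u,x,y)\ge w(x+y)$ work, and it uses in an essential way that Monge--Amp\`ere sections are balanced around their centre.
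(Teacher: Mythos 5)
Your proposal is correct and follows the paper's own argument step by step: the case analysis via Proposition~\ref{p3.1} showing $\delta(u,x,y)\le 0$ for all $y$, the classical evaluation of $M^+u(x)$ so that only the $\lambda\,\delta^-$ term survives, the restriction to $S_{r_0}(x)$ and the dyadic decomposition into the annuli $R_k(x)$ with $v_x\le r_k^2$ there, the lower bound $-\delta\gtrsim Mr_k^2$ on $W_k(x)$, and the contradiction argument summing over $k$ are all exactly the paper's steps (the paper's factor $2Mr_k^2$ versus your $Mr_k^2$ is immaterial). The one place you genuinely deviate is the final geometric estimate: the paper asserts $|R_k(x)|\ge c\,r_k^n$ annulus by annulus, citing Theorem~\ref{t2.1}, and then sums $\sum_k r_k^{2-\sigma}\gtrsim \frac{1}{2-\sigma}$; you instead keep $|R_k(x)|=|S_{r_k}(x)|-|S_{r_{k+1}}(x)|$ and sum by parts, using only the two-sided bound $|S_r(x)|\asymp r^n$. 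Your version is the more careful of the two, since the pointwise lower bound $|R_k(x)|\ge c r_k^n$ does not follow from Theorem~\ref{t2.1} alone unless the eccentricity constants there satisfy a strict doubling inequality ($2c>C$ in the notation of that theorem), whereas the Abel summation needs no such assumption and still produces the $\frac{1}{2-\sigma}$ factor that keeps $C_0$ independent of $\sigma$; just note that the monotonicity of $r_k^{2-n-\sigma}$ you use requires $2-n-\sigma<0$, which holds for $n\ge 2$.
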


\begin{proof} Since $u$ can be touched by the plane
$$
\Gamma(x)+\nabla\Gamma(x)\cdot(y-x)
$$
from above at $x$, then from Lemma \ref{l2.2}, $M^+u(x)$ is defined classically, and we have
$$
M^+u(x)=(2-\sigma)\int_{\R^n}
\frac{\Lambda\delta^+-\lambda\delta^-}{v_x^{\frac{n+\sigma}{2}}(y)}\,dy.
$$
Note that $\delta(u,x,y)=u(x+y)+u(x-y)-2u(x)\leq0$, whenever $x\in\{u=\Gamma\}$. In fact, if both $x-y$ and $x+y$ are in $S_\tau$, then $\delta\leq0$, since $u(x)=\Gamma(x)=p(x)$ for some plane $p$ that remains above $u$ in the whole section $S_\tau$. On the other hand, if either $x-y$ or $x+y$ is not in $S_\tau$, then by Proposition \ref{p3.1} both are not in $S_1$, and thus $u$ is non-positive at those points. Hence, in any case, $\delta\leq0$, and therefore
\begin{eqnarray*}
-f(x)\leq M^+u(x)&=&(2-\sigma)\int_{\R^n}
\frac{-\lambda\delta^-}{v_x(y)^{\frac{n+\sigma}{2}}}\,dy\nonumber\\
&\leq&(2-\sigma)\int_{S_{r_0}(x)}\frac{-\lambda\delta^-}{v_x(y)^{\frac{n+\sigma}{2}}}\,dy,
\end{eqnarray*}
where $r_0=2^{-1/(2-\sigma)}$. Now, splitting the integral in the sections and reorganizing terms, we obtain
$$
f(x)\geq(2-\sigma)\lambda\sum_{k=0}^\infty\int_{S_{r_k}(x)\setminus S_{r_{k+1}}(x)}\frac{\delta^-}{v_x(y)^{\frac{n+\sigma}{2}}}\,dy,
$$
which, together with \eqref{2.1}, provides
\begin{equation}\label{3.1}
f(x)\geq(2-\sigma)\lambda\sum_{k=0}^\infty\int_{R_{k}(x)}\frac{\delta^-}{{r_k^{n+\sigma}}}\,dy.
\end{equation}
Note that since $x\in\{u=\Gamma\}$, then
\begin{equation}\label{3.2}
W_k(x)\subset R_k(x)\cap\{z \, : \, -\delta>2Mr_k^2\}.
\end{equation}
But $\delta\leq0$ and so $-\delta=\delta^-$. From \eqref{3.1}-\eqref{3.2}, we then have
\begin{equation}\label{3.3}
f(x)\geq2M\lambda(2-\sigma)\sum_{k=0}^\infty r_k^{2-n-\sigma}|W_k(x)|.
\end{equation}
Suppose now the conclusion of the lemma is false. Then \eqref{3.3} implies
\begin{equation}\label{3.4}
f(x)\geq2\lambda C_0(2-\sigma)f(x)\sum_{k=0}^\infty r_k^{2-n-\sigma}|R_k(x)|.
\end{equation}
Using Theorem \ref{t2.1}, we estimate
$$
|R_k(x)|\geq cr_k^n,
$$
where $c>0$ is a universal constant. Combining the latter with \eqref{3.4}, we deduce
\begin{eqnarray*}
f(x)&\geq& 2\lambda(2-\sigma)C_0cf(x)\sum_{k=0}^\infty r_k^{2-\sigma}\nonumber\\
&\geq&C(2-\sigma)\frac{1}{1-2^{-(2-\sigma)}}C_0f(x)\nonumber\\
&\geq&CC_0f(x),
\end{eqnarray*}
where the last inequality holds because $(2-\sigma)/(1-2^{-(2-\sigma)})$ remains bounded below for $\sigma\in(0,2)$. The constant $C>0$ depends only on $\lambda$, $n$ but not on $\sigma$. By choosing $C_0$ large enough, we obtain a contradiction.
\end{proof}

\begin{remark}\label{r3.1}
Note that if $M^+u(x)\geq g(x)$, then $u\neq\Gamma$ in $\{g>0\}$.
\end{remark}

The next lemma reveals that Lemma \ref{l3.1} implies a uniform quadratic detachment of $\Gamma$ from its tangent plane in a smaller section.

\begin{lemma}\label{l3.2}
Let $r\in(0,1)$, $\Gamma$ be a concave function in $S_r(x)$ and $h>0$. There exists $\varepsilon_0>0$ such that, if 
\begin{align*}
& \left| \left( S_r \setminus S_{r/2} \right) (x)\cap\{y\, : \, \Gamma (y)<\Gamma(x)+\nabla\Gamma(x)\cdot (y-x)-h\} \right| \\
& \leq \varepsilon \left| S_r\setminus S_{r/2} \right|,
\end{align*}
for $0<\varepsilon\leq\varepsilon_0$, then
$$
\Gamma(y)\geq\Gamma(x)+\nabla\Gamma(x)\cdot (y-x)-h,
$$
in the whole section $S_{r/2}(x)$.
\end{lemma}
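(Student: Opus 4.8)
The plan is to prove the contrapositive via the geometry of Monge-Amp\`ere sections, exploiting the fact that a concave function lying below its tangent plane by a fixed amount at one point forces — by concavity along chords through that point and the engulfing/comparability properties of sections — control on an entire annular shell. Concretely, suppose the conclusion fails, so there is a point $z \in S_{r/2}(x)$ with $\Gamma(z) < \Gamma(x) + \nabla\Gamma(x)\cdot(z-x) - h$; we must exhibit a definite fraction of the annulus $S_r(x)\setminus S_{r/2}(x)$ on which the same (or a comparable) deficit below the tangent plane holds, contradicting the smallness hypothesis once $\varepsilon_0$ is chosen small enough. Without loss of generality, normalise the section $S_r(x)$ by an affine map $T$ so that $\Gamma(x)=0$, $\nabla\Gamma(x)=0$ (subtract the tangent plane, which preserves concavity and the superdifferential), and so that $B_{\alpha_n}\subset T(S_r(x))\subset B_1$, with $T(S_{r/2}(x))$ comparable to a ball of radius $\sim 1/2$ by Theorem \ref{t2.3}. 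In these coordinates the statement becomes: a concave function, $\leq 0$, vanishing with zero gradient at the (normalised) origin, that dips below $-h$ somewhere in the inner ball, must dip below $-ch$ on a fixed-proportion subset of the outer annulus.

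The main step is the following elementary convex-geometry fact. Let $w := -\Gamma \geq 0$ be convex (after normalisation and after subtracting the tangent plane), with $w(0)=0$, $0 \in \partial w(0)$ i.e. $w \geq 0$, and suppose $w(z_0) \geq h$ for some $z_0$ with $|z_0| \leq K$ ($K$ a universal constant coming from $T(S_{r/2}(x)) \subset B_{K_3}$). Consider the ray from $0$ through $z_0$, and more generally rays from $z_0$: by convexity, along the ray starting at $0$ in the direction of $z_0$, $w$ is nondecreasing past $z_0$ up to a comparable multiple, so $w \geq h/2$, say, on a segment reaching into the annular region $T(S_r(x)) \setminus T(S_{r/2}(x))$. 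To upgrade a segment to a set of positive measure, I would use the standard trick: the convex function $\tilde w(y) := \max\{0,\, \ell(y)\}$, where $\ell$ is an affine function with $\ell(0) \leq 0$, $\ell(z_0) = h$, chosen so that $\ell \leq w$ (possible since $w$ is convex and lies above its chords/support lines through appropriate points), already has $\{\tilde w \geq ch\}$ occupying a fixed fraction of any fixed annulus that the half-space $\{\ell \geq ch\}$ meets nontrivially. One checks that $\{\ell \geq ch\} \cap \big(T(S_r(x))\setminus T(S_{r/2}(x))\big)$ has measure bounded below by a universal constant times $|T(S_r(x))\setminus T(S_{r/2}(x))|$, using that the annulus contains a fixed ball $B_{c'}(p)$ at distance $\sim 1$ from the origin (Theorem \ref{t2.1} gives the annulus the right size, and Theorem \ref{t2.3} its shape), and that a half-space whose boundary hyperplane passes within bounded distance of the origin cuts off at least a fixed fraction of any such ball. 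Transporting back through $T^{-1}$ (which distorts measure by a fixed Jacobian factor, and sends $T(S_r(x))\setminus T(S_{r/2}(x))$ to $S_r(x)\setminus S_{r/2}(x)$), and adjusting constants, yields
$$
\left| \left( S_r\setminus S_{r/2}\right)(x) \cap \{y : \Gamma(y) < \Gamma(x) + \nabla\Gamma(x)\cdot(y-x) - h\}\right| \geq \varepsilon_0 \left| S_r\setminus S_{r/2}\right|,
$$
with $\varepsilon_0$ the universal lower bound just obtained, contradicting the hypothesis.

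The hard part will be keeping the affine normalisation honest: the map $T$ normalising $S_r(x)$ distorts $S_{r/2}(x)$ into something only \emph{comparable} to a ball, not a ball, and the superdifferential $\nabla\Gamma(x)$ need not be a gradient, so ``subtract the tangent plane'' must be done carefully so that the resulting $w$ is genuinely nonnegative (this is where the choice of the particular supporting plane matters, and where concavity of $\Gamma$ on all of $S_r(x)$ is used). One also needs the deficit $h$ to survive the normalisation unchanged — it does, because subtracting an affine function and then composing with an affine map sends the level set $\{\Gamma < \text{tangent plane} - h\}$ to $\{w > h\}$ with the \emph{same} $h$, since vertical translations are untouched by $T$. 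Once these bookkeeping points are settled, the quantitative half-space-versus-annulus estimate is purely a matter of the universal constants $\alpha_n, K_1, K_2, K_3, \varepsilon$ from Theorems \ref{t2.1} and \ref{t2.3}, and $\varepsilon_0$ depends only on $n$ (through these).
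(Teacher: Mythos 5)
Your contrapositive strategy is viable and genuinely different from the paper's, but the key quantitative step rests on a false principle and, as written, does not close. For contrast, the paper argues directly: for each $y\in S_{r/2}(x)$ it produces two points $z_1,z_2$ in the annulus, each carrying a section $S_{cr}(z_i)$ of definite measure inside $S_r(x)\setminus S_{r/2}(x)$ (via Theorem \ref{t2.4}), with $y=\alpha z_1+(1-\alpha)z_2$; smallness of the bad set lets one take both $z_i$ good, and concavity along the chord transfers the lower bound to $y$. You instead propagate a single bad point $z_0\in S_{r/2}(x)$ outward: with $w:=\Gamma(x)+\nabla\Gamma(x)\cdot(\cdot\,-x)-\Gamma$ convex, nonnegative and vanishing at $x$, and $\ell$ a supporting affine function of $w$ at $z_0$, the half-space $\{\ell>h\}$ is contained in the bad set. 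Up to here you are fine, and your bookkeeping about the superdifferential and the invariance of the deficit $h$ under the affine normalization is correct.

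The gap is the claim that ``a half-space whose boundary hyperplane passes within bounded distance of the origin cuts off at least a fixed fraction of any such ball'' $B_{c'}(p)$ fixed in advance inside the annulus. This is false: the half-space can point away from, and entirely miss, a prescribed ball; worse, the part of $\{\ell>h\}$ lying in the annulus could a priori be a thin sliver, since most of $\{\ell>h\}\cap S_r(x)$ might sit inside $S_{r/2}(x)$ when $\partial\{\ell=h\}$ is nearly tangent to $S_{r/2}(x)$. The ball must be chosen \emph{after} the direction $\nu:=\nabla\ell/|\nabla\ell|$ is known (note $\nabla\ell\neq0$ because $\ell(x)\le 0<h<\ell(z_0)$). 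The correct closing move is: since $\ell$ increases in the direction $\nu$ and $\ell(z_0)>h$ with $z_0\in S_{r/2}(x)$, the half-space $\{\ell>h\}$ contains $H:=\{y:\nu\cdot y>\sup_{S_{r/2}(x)}\nu\cdot y\}$, which is automatically disjoint from $S_{r/2}(x)$. Taking $q\in\partial S_{r/2}(x)$ extremal in the direction $\nu$, Theorem \ref{t2.4}(i) (after normalization, via Theorem \ref{t2.3}) provides a ball of universal radius centred at $q$ and contained in $S_r(x)$; the half of that ball lying in $H$ has measure comparable to $|S_r\setminus S_{r/2}|$ and sits inside the bad set intersected with the annulus, which yields the desired lower bound $\varepsilon_0$. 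With this substitution your argument works; without it the quantitative estimate on the annulus does not follow.
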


\begin{proof}
Let $y\in S_{r/2}(x)$. Using Theorem \ref{t2.4}, we conclude that there exist two points $z_1$ and $z_2$ in $S_r(x)\setminus S_{r/2}(x)$ such that the sections $S_{cr}(z_1)$ and $S_{cr}(z_2)$ are contained in the ring $S_r(x)\setminus S_{r/2}(x)$, for some constant $c>0$. Moreover, we can choose these points such that $y=\alpha z_1+(1-\alpha)z_2$ for some $\alpha\in(0,1)$. If $\varepsilon_0$ is small enough, then at those points one has 
$$
\Gamma(z_1)\geq\Gamma(x)+\nabla\Gamma(x)\cdot (z_1-x)-h
$$
and
$$
\Gamma(z_2)\geq\Gamma(0)+\nabla\Gamma(x)\cdot (z_2-x)-h.
$$
The concavity of $\Gamma$ then gives 
$$
\Gamma(y)\geq\alpha\Gamma(z_1)+(1-\alpha)\Gamma(z_2)=\Gamma(x)+\nabla\Gamma(x)\cdot (y-x)-h.
$$
\end{proof}

\begin{corollary}\label{c3.1}
Let $u$ be as in Lemma \ref{l3.1} and $r_0=2^{-1/(2-\sigma)}$. Under the hypothesis of Lemma \ref{l3.1}, for every $\varepsilon >0$, there exist $C=C(n,\varepsilon)>0$ and $r\in(0,r_0)$ such that
\begin{align*}
& \left| (S_r\setminus S_{r/2})(x)\cap\{ y \, :\, u(y)<u(x)+\nabla\Gamma(x)\cdot (y-x)-Cf(x)r^2\} \right| \\
& \leq  \varepsilon \left| S_r\setminus S_{r/2} \right|
\end{align*}
and
$$
|\nabla\Gamma(S_{r/4}(x))|\leq Cf(x)^n|S_{r/4}(x)|.
$$
\end{corollary}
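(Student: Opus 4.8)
The plan is to read off the first inequality directly from Lemma~\ref{l3.1}, then use it together with Lemma~\ref{l3.2} to trap the concave envelope $\Gamma$ between two parallel planes on a whole section $S_{r/2}(x)$, and finally deduce the gradient bound by a normalization argument. Throughout, $x$ is a contact point in $\{u=\Gamma\}\cap S_1$, as in Lemma~\ref{l3.1}; we may assume $f(x)>0$ and $\varepsilon\le\varepsilon_0$, where $\varepsilon_0$ is the constant of Lemma~\ref{l3.2} (the cases $f(x)=0$ and $\varepsilon>\varepsilon_0$ being immediate). First I would apply Lemma~\ref{l3.1} with $M:=C_0f(x)/\varepsilon$: it produces an index $k$, and, setting $r:=r_k=2^{-1/(2-\sigma)-k}$ and $C_1:=C_0/\varepsilon$, one has $Mr_k^2=C_1f(x)r^2$, while $(S_r\setminus S_{r/2})(x)=R_k(x)$ and $|W_k(x)|\le(C_0f(x)/M)|R_k(x)|=\varepsilon|R_k(x)|$; this is exactly the first claimed inequality, with $C_1$ in place of $C$. (Summing from $k=1$ in the proof of Lemma~\ref{l3.1} --- the remaining series being still bounded below uniformly in $\sigma$ --- one may take $k\ge1$, hence $r<r_0$.)

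Next, set $P(y):=\Gamma(x)+\nabla\Gamma(x)\cdot(y-x)=u(x)+\nabla\Gamma(x)\cdot(y-x)$. By the engulfing property (Theorem~\ref{t2.2}) one has $S_{r/2}(x)\subset S_\tau$, so there $u\le\Gamma\le P$, the last inequality because $\nabla\Gamma(x)$ is a supergradient of the concave function $\Gamma$ on $S_\tau$. Since $u\le\Gamma$ forces $\{\Gamma<P-C_1f(x)r^2\}\subset\{u<P-C_1f(x)r^2\}$, the first inequality bounds $|(S_r\setminus S_{r/2})(x)\cap\{\Gamma<P-C_1f(x)r^2\}|$ by $\varepsilon|S_r\setminus S_{r/2}|$, and Lemma~\ref{l3.2} (with $h=C_1f(x)r^2$) gives $\Gamma\ge P-C_1f(x)r^2$ on all of $S_{r/2}(x)$. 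Combined with $\Gamma\le P$ this yields
\[
|\Gamma-P|\le C_1f(x)r^2\qquad\text{on }S_{r/2}(x).
\]

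For the gradient bound I would normalize. Let $\mathcal Tz=Az+b$ be the affine map normalizing $S_{r/2}(x)$, so $B_{\alpha_n}\subset\mathcal T(S_{r/2}(x))=:\widetilde\Omega\subset B_1$; since $|S_{r/2}(x)|\approx r^n$ by Theorem~\ref{t2.1}, we get $|\det A|=|\widetilde\Omega|/|S_{r/2}(x)|\approx r^{-n}$. By Theorem~\ref{t2.4}(i), every $z\in S_{r/4}(x)=S_{(r/2)(1/2)}(x)$ satisfies $S_{c_\ast r}(z)\subset S_{r/2}(x)$ for a universal $c_\ast>0$, and Theorem~\ref{t2.3} then gives $B_{d_0}(\mathcal Tz)\subset\widetilde\Omega$ for a universal $d_0>0$; hence $\mathcal T(S_{r/4}(x))$ lies at distance at least $d_0$ from $\partial\widetilde\Omega$. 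The function $\widetilde\Gamma:=\Gamma\circ\mathcal T^{-1}$ is concave on $\widetilde\Omega$ and, by the previous step, stays within $H:=C_1f(x)r^2$ of the affine function $\widetilde P:=P\circ\mathcal T^{-1}$; the elementary bound for a concave function pinched near an affine one then shows that every supergradient of $\widetilde\Gamma$ at a point of $\mathcal T(S_{r/4}(x))$ lies in the ball of radius $2H/d_0$ about $\nabla\widetilde P$, so $|\nabla\widetilde\Gamma(\mathcal T(S_{r/4}(x)))|\le C(n)H^n$. Changing variables back,
\begin{align*}
|\nabla\Gamma(S_{r/4}(x))| &= |\det A|\,|\nabla\widetilde\Gamma(\mathcal T(S_{r/4}(x)))| \le C(n)\,r^{-n}(C_1f(x)r^2)^n\\
&= C(n,\varepsilon)\,f(x)^n r^n \le C(n,\varepsilon)\,f(x)^n|S_{r/4}(x)|,
\end{align*}
again by Theorem~\ref{t2.1}. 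Taking $C$ to be the larger of $C_1$ and this last constant gives both inequalities.

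The step I expect to be the main obstacle is the geometric one in the last paragraph: showing, via Theorems~\ref{t2.3}--\ref{t2.4}, that after normalization $S_{r/4}(x)$ sits at a uniform distance inside $S_{r/2}(x)$, so that the concavity estimate for the gradient image has the correct scaling in $r$. The rest is bookkeeping of constants --- in particular checking that they do not depend on $\sigma$, which is inherited from the corresponding property of $C_0$ in Lemma~\ref{l3.1}.
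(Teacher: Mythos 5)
Your proof is correct and follows essentially the same route as the paper: take $M=C_0f(x)/\varepsilon$ in Lemma \ref{l3.1}, use Lemma \ref{l3.2} to pinch $\Gamma$ between two parallel planes on $S_{r/2}(x)$, and then bound the measure of $\nabla\Gamma(S_{r/4}(x))$ by concavity. Your normalization of $S_{r/2}(x)$ in the last step is in fact a more careful rendering of the paper's direct pointwise gradient estimate, which as written does not account for the possible eccentricity of the sections; the affine-invariant computation you give (gradient bound $2H/d_0$ in normalized coordinates, then changing variables with $|\det A|\approx r^{-n}$) is the right way to justify the same conclusion.
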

\begin{proof}
By taking $M=C_0f(x)/\varepsilon$ in Lemma \ref{l3.1}, we obtain the first estimate with $C=C_1:=C_0/\varepsilon$. Moreover, since $u(x)=\Gamma(x)$ and $u(y)\leq\Gamma(y)$, for $y\in S_r(x)$, one has
$$
\left( S_r\setminus S_{r/2} \right) (x) \cap\left\{ y\,:\,\Gamma(y)<\Gamma(x)+\nabla\Gamma(x)\cdot (y-x)-C_1f(x)r^2 \right\} \subset W_r(x),
$$
where
$$
W_r(x):= \left( S_r\setminus S_{r/2} \right)(x)\cap\left\{y\,:\,u(y)<u(x)+\nabla\Gamma(x)\cdot (y-x)-C_1f(x)r^2\right\}.
$$
Set
$$
F(y):=\Gamma(y)-\Gamma(x)-\nabla\Gamma(x)\cdot (y-x)+C_1f(x)r^2.
$$
From Lemma \ref{l3.2} and the concavity of $\Gamma$, we have
\begin{equation}\label{3.6}
0\leq F(y)\leq C_1f(x)r^2, \quad \forall y \in S_{r/2}(x).
\end{equation}
Since $F$ is concave and
$$
\nabla F(y)=\nabla\Gamma(y)-\nabla\Gamma(x),
$$
using \eqref{3.6}, we obtain the bound
$$
|\nabla\Gamma(y)-\nabla\Gamma(x)|\leq C_2f(x)r^2,\quad \forall y \in S_{r/4}(x),
$$
for a constant $C_2>0$. 
Therefore,
$$
\nabla\Gamma(S_{r/4}(x))\subset B_{C_2f(x)r^2}(\nabla\Gamma(x))
$$
and, estimating the measures of these sets and using Theorem \ref{t2.1}, we obtain, with $\alpha (n)$ denoting the volume of the unit ball and observing that $r^2 <r$,
$$
|\nabla\Gamma(S_{r/4}(x))|\leq \alpha(n) C_2^nf(x)^nr^{2n}\leq C_3f(x)^n \left |S_{r/4}(x) \right|,
$$
for a constant $C_3>0$. Taking $C=\max\{C_1,C_3\}$, we conclude the proof.
\end{proof}
We then derive a lower bound on the volume of the union of the sections $S_r$, where $\Gamma$ (and $u$) detaches quadratically from its tangent plane. 

\begin{corollary}\label{c3.2}
For each $x\in\Sigma:=\{u=\Gamma\}\cap S_1$, let $S_r(x)$ be the section obtained in Corollary \ref{c3.1}. Then
$$
C(\sup u)^n\leq\left|\displaystyle\bigcup_{x\in\Sigma}S_r(x)\right|.
$$
\end{corollary}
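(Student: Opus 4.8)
The plan is to run the non-local ABP argument in the Monge--Amp\`ere geometry: combine the measure estimate for $\nabla\Gamma$ on small sections from Corollary~\ref{c3.1} with the classical fact that the superdifferential of a concave envelope covers a ball whose radius is comparable to the height of the envelope, and stitch the local estimates together by means of the Besicovitch-type covering of Lemma~\ref{l2.0}, which plays the role of the Vitali covering available in the Euclidean case. We may assume $\mu:=\sup u>0$, since otherwise the right-hand side already dominates the left, and we recall that $u\le0$ outside $S_1$ (so that $\mu=\sup_{S_1}u$) and that $\Gamma\ge u^+\ge0$ on $S_\tau$.

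The first, and main, step is to show that $\nabla\Gamma(\Sigma)\supset B_{c\mu}$ for a constant $c>0$ depending only on $\diam S_\tau$ (equivalently, on the normalisation of $S_\tau$), where $B_{c\mu}$ is the ball of radius $c\mu$ centred at the origin. Given $x^*\in S_1$ with $u(x^*)=\mu$ and $\xi\in\R^n$ with $|\xi|<\mu/\diam S_\tau$, let $m\ge0$ be the least constant for which $\ell(y):=\mu+\xi\cdot(y-x^*)+m$ satisfies $\ell\ge u^+$ on $\overline{S_\tau}$, and let $\bar x\in\overline{S_\tau}$ be a point where $\ell(\bar x)=u^+(\bar x)$; such a point exists by upper semicontinuity. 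On $\overline{S_\tau}\setminus S_1$ one has $u^+=0$ while $\ell>0$ by the choice of $\xi$, so $\bar x\in S_1$; then $\ell(\bar x)>0$ forces $u(\bar x)=u^+(\bar x)$, and since $\ell$ is an admissible competitor in the definition of $\Gamma$ and touches $u^+$ at $\bar x$, we obtain $\Gamma(\bar x)=u(\bar x)$, i.e. $\bar x\in\Sigma$. Finally, $\ell(y)=\Gamma(\bar x)+\xi\cdot(y-\bar x)\ge\Gamma(y)$ on $S_\tau$ shows that $\xi$ is a supergradient of $\Gamma$ at $\bar x$, hence $\xi\in\nabla\Gamma(\Sigma)$; letting $\xi$ vary over $B_{c\mu}$ yields the inclusion with $c=1/\diam S_\tau$.

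Next I would cover $\Sigma$ by the sections $\{S_{r(x)/4}(x)\}_{x\in\Sigma}$ furnished by Corollary~\ref{c3.1} (each $r(x)<r_0<1$, and their union is bounded, being contained in a fixed section by the engulfing property of Theorem~\ref{t2.2}), and apply Lemma~\ref{l2.0} to extract a countable subcovering $\{S_{r_k/4}(x_k)\}_k$, $x_k\in\Sigma$, with $\bigcup_{x\in\Sigma}S_{r(x)/4}(x)\subset\bigcup_k S_{r_k/4}(x_k)$ and with the shrunk family $\{S_{(1-\varepsilon)r_k/4}(x_k)\}_k$ of bounded overlap at most $M\log(1/\varepsilon)$, for a fixed small $\varepsilon\le1/2$. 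On $\Sigma$ one has $0\le f\le\|f\|_\infty$, the nonnegativity being Remark~\ref{r3.1}, so the second estimate of Corollary~\ref{c3.1} gives
$$
\alpha(n)\,c^n\mu^n=|B_{c\mu}|\le|\nabla\Gamma(\Sigma)|\le\sum_k|\nabla\Gamma(S_{r_k/4}(x_k))|\le C\|f\|_\infty^n\sum_k|S_{r_k/4}(x_k)|.
$$
By the doubling property (Theorem~\ref{t2.4}(iv)), $|S_{r_k/4}(x_k)|\le2^n|S_{(1-\varepsilon)r_k/4}(x_k)|$, and the bounded overlap together with $\bigcup_k S_{(1-\varepsilon)r_k/4}(x_k)\subset\bigcup_{x\in\Sigma}S_{r(x)}(x)$ gives $\sum_k|S_{r_k/4}(x_k)|\le2^nM\log(1/\varepsilon)\,|\bigcup_{x\in\Sigma}S_{r(x)}(x)|$. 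Substituting into the previous display produces the claim, with $C$ depending only on $n$, $\lambda$, $\|f\|_\infty$ and the normalisation of $S_\tau$.

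The bulk of this is bookkeeping: the two genuinely non-trivial ingredients --- the gradient measure estimate on sections and a covering with controlled overlap in a geometry where round balls are replaced by possibly very eccentric sections --- are already available from Corollary~\ref{c3.1} and Lemma~\ref{l2.0}, and the only price relative to the Euclidean argument is the logarithmic overlap factor, harmlessly absorbed into the constant. The step that I expect to require the most care is the first one: checking that the contact points of the slid supporting planes fall inside $S_1$, so that $\Gamma$ genuinely agrees with $u$ there. This is precisely where the hypothesis $u\le0$ outside $S_1$ and the concavity of $\Gamma$ on the convex set $S_\tau$ enter, and it is the analogue of the classical observation that a concave envelope detaches from the subsolution only where the subsolution is large.
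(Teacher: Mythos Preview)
Your proof is correct and follows essentially the same route as the paper: both combine the inclusion $B_{c\sup u}\subset\nabla\Gamma(\Sigma)$ with the gradient-measure bound of Corollary~\ref{c3.1} and the Besicovitch-type covering of Lemma~\ref{l2.0}, then sum. Your write-up is simply more detailed---you prove the sliding-plane inclusion explicitly (the paper compresses this into $(\sup u)^n=(\sup\Gamma)^n\le C|\nabla\Gamma(S_\tau)|=C|\nabla\Gamma(\Sigma)|$) and you track the $\|f\|_\infty$ dependence of the constant; one harmless slip is that Lemma~\ref{l2.0} only yields $\Sigma\subset\bigcup_k S_{r_k/4}(x_k)$, not the containment of the full union you state, but that weaker inclusion is all your chain of inequalities actually uses.
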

\begin{proof}
Using Lemma \ref{l2.0}, we cover $\Sigma$ by sections $S_r$ with bounded overlaps. Since $\Gamma$ has quadratic growth in each section $S_k$ of the covering, then from Corollary \ref{c3.1} we have
$$
\left| \nabla\Gamma(S_k) \right| \leq C \left| S_k \right|,
$$
where $C>0$ is a universal constant. Strictly speaking, the estimate is only valid on the set $S_{r/4}$. A rigorous justification follows in the same manner as in \cite{SS16}, as Lemma 4.5 is used to prove Lemma 4.1. Thus,
\begin{eqnarray*}
(\sup u)^n&=& \left( \sup\Gamma \right)^n  \leq  C \left| \nabla\Gamma \left( S_\tau \right) \right|  =  C \left| \nabla\Gamma \left( \Sigma \right) \right| \\
& \leq & C \sum_k \left| \nabla\Gamma \left( S_k \right) \right| \\
& \leq & C \sum_k \left| S_k \right|.
\end{eqnarray*}
\end{proof}

The next result is a consequence of Corollary \ref{c3.1}, and provides the first step towards the so-called $L^\varepsilon$ estimate.

\begin{theorem}\label{t3.1}
There exists  a constant $\kappa>0$ and a countable family of sections $\left\{S_i\right\}_{i=1}^\infty$, with center $x_i\in\Sigma$ and height $\frac{r}{4}\leq r_i<\frac{r}{2}$, where $r\in(0,r_0)$ is as in Corollary \ref{c3.1}, covering $\Sigma$ and with bounded overlaps, such that 
$$
|\nabla\Gamma(\overline{S}_i)|\leq C\left( \displaystyle\max_{\overline{S}_i}f \right)^n \left| S_i \right|
$$
and
$$
\left| \left\{ y\in\kappa S_{i} \, : \, u(y)\geq\Gamma(y)-C \left( \displaystyle\max_{\overline{S}_i}f \right)r_i^2 \right\} \right| \geq \mu \left| S_i \right|,
$$
where the constants $C>0$ and $\mu>0$ depend only on $n$, $\lambda$, $\Lambda$, but not on $\sigma$.
\end{theorem}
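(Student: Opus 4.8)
The plan is to package the pointwise, scale-by-scale information of Corollary~\ref{c3.1} into a single countable family of sections by means of the Besicovitch-type covering Lemma~\ref{l2.0}, in the spirit of the dyadic cube selection in \cite{CS09} but with Monge-Amp\`ere sections in place of cubes. Throughout I fix once and for all $\varepsilon=\varepsilon_0$, the (universal) threshold from Lemma~\ref{l3.2}; every constant produced below is then universal, in particular independent of $\sigma$.

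For each $x\in\Sigma=\{u=\Gamma\}\cap S_1$, Corollary~\ref{c3.1} (whose height $r_x$ genuinely depends on the point, since its proof rests on the choice of ring in Lemma~\ref{l3.1}) furnishes $r_x\in(0,r_0)$ and a universal $C$ with $|B_x|\le\varepsilon\,|S_{r_x}\setminus S_{r_x/2}|$, where $B_x:=(S_{r_x}\setminus S_{r_x/2})(x)\cap\{y:u(y)<u(x)+\nabla\Gamma(x)\cdot(y-x)-Cf(x)r_x^2\}$, together with $|\nabla\Gamma(S_{r_x/4}(x))|\le Cf(x)^n\,|S_{r_x/4}(x)|$. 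On the good part $G_x:=(S_{r_x}\setminus S_{r_x/2})(x)\setminus B_x$ I would record that, since $u(x)=\Gamma(x)$ and $\Gamma$ is concave, so that $\Gamma(y)\le\Gamma(x)+\nabla\Gamma(x)\cdot(y-x)$, every $y\in G_x$ satisfies $u(y)\ge\Gamma(y)-Cf(x)r_x^2$. Next I quantify that the ring is fat: by Theorem~\ref{t2.4} it contains a full section $S_{cr_x}(z)$ for a universal $c>0$ — exactly the construction used in the proof of Lemma~\ref{l3.2} — so Theorem~\ref{t2.1} gives $|S_{r_x}\setminus S_{r_x/2}|\ge |S_{cr_x}(z)|\ge c_1 r_x^n\ge c_2|S_{r_x}(x)|\ge c_2|S_{r_x/4}(x)|$. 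Since $\varepsilon<1$, this yields $|G_x|\ge\mu_0\,|S_{r_x/4}(x)|$ for a universal $\mu_0>0$.

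The sections $\{S_{r_x/4}(x)\}_{x\in\Sigma}$ then cover $\Sigma$ (which is bounded, being contained in $S_1$) and have heights bounded by $r_0/4<1$, so Lemma~\ref{l2.0}, applied with a fixed $\epsilon$ (say $\epsilon=1/2$, so the overlap constant $M\log\frac{1}{\epsilon}$ is universal), extracts a countable subfamily covering $\Sigma$ with bounded overlaps (in the sense of Lemma~\ref{l2.0}(iii), which by doubling also controls $\sum_i|S_i|$ in terms of $|\bigcup_i S_i|$). Relabelling its members as $S_i=S_{r_i}(x_i)$ with $x_i\in\Sigma$ and $r_i=r_{x_i}/4\in[r_{x_i}/4,r_{x_i}/2)$, the gradient bound on $\overline{S}_i$ is immediate from Corollary~\ref{c3.1} itself, $|\nabla\Gamma(\overline{S}_i)|\le Cf(x_i)^n|S_i|\le C(\max_{\overline{S}_i}f)^n|S_i|$, the passage to the closure being handled by the approximation argument of \cite{SS16}, exactly as invoked in the proof of Corollary~\ref{c3.2}. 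Finally I take $\kappa$ to be any fixed constant with $\kappa\ge4$, so that $\kappa S_i=S_{\kappa r_i}(x_i)\supset S_{r_{x_i}}(x_i)\supset G_{x_i}$; since $r_{x_i}=4r_i$ and $f(x_i)\le\max_{\overline{S}_i}f$, on $G_{x_i}$ one has $u(y)\ge\Gamma(y)-Cf(x_i)r_{x_i}^2\ge\Gamma(y)-16C(\max_{\overline{S}_i}f)r_i^2$, while $|G_{x_i}|\ge\mu_0|S_i|$ by the previous step. Relabelling $C$ as $16C$ and setting $\mu=\mu_0$ completes the proof.

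The bookkeeping of scales ($r_x$, $r_x/2$, $r_x/4$, $\kappa r_i$), the repeated use of the doubling estimate Theorem~\ref{t2.4}(iv) and of the volume comparison Theorem~\ref{t2.1}, and the closure issue in the gradient bound are all routine. The step deserving real care is the uniform fatness of the annuli $S_{r_x}(x)\setminus S_{r_x/2}(x)$ — that each contains a section of size comparable to $|S_{r_x}(x)|$ with constants surviving the possibly degenerate eccentricity of Monge-Amp\`ere sections — since this is precisely what makes the measure lower bound $|G_x|\ge c\,|S_i|$ universal; together with checking that Lemma~\ref{l2.0} genuinely produces a bounded overlap (it does, the $\log\frac{1}{\epsilon}$ factor being harmless once $\epsilon$ is fixed), this is the heart of the argument.
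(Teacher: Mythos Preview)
Your proof is correct and follows essentially the same approach as the paper's: apply Corollary~\ref{c3.1} pointwise, use the concavity inequality $\Gamma(y)\le u(x_i)+\nabla\Gamma(x_i)\cdot(y-x_i)$ to pass from the tangent-plane condition to the $\Gamma$-condition, extract a countable covering via Lemma~\ref{l2.0}, and compare volumes. The paper's proof is terser --- it compresses your Steps~2--3 into the single chain $\ge(1-\varepsilon)|S_r\setminus S_{r/2}|\ge\mu|S_i|$ and invokes Theorem~\ref{t2.4} for $\kappa$ rather than computing $\kappa=4$ directly --- but the architecture is identical, and your explicit treatment of the annulus fatness (which the paper leaves implicit) is a welcome clarification.
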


\begin{proof}
Using Lemma \ref{l2.0}, we find the covering $\left\{S_i(x_i)\right\}_{i=1}^\infty$ satisfying the desired properties.   We have $\overline{S}_i\subset S_{r/2}(x_i)$ and, by Theorem \ref{t2.4}, there is a constant $\kappa>0$ such that 
$$S_r(x_i)\subset\kappa S_i.$$
Moreover, since $\Gamma$ is concave, we also have
$$\Gamma(y)\leq u(x_i)+\nabla\Gamma(x_i)\cdot(y-x_i).$$

From Corollary \ref{c3.1} and the fact that $r_i$ and $r$ are comparable (recall also that the volume of $S_r$ is comparable to $r^n$), we obtain
\begin{align*}
&\left|\left\{ y\in\kappa S_i\,:\,u(y)\geq\Gamma(y)-C \left( \displaystyle\max_{\overline{S}_i}f \right)r_i^2 \right\} \right| \\
&\geq \left| \left\{ y\in\kappa S_i\,:\,u(y)\geq u(x_i)+\nabla\Gamma(x_i)\cdot(y-x_i)-Cf(x_i)r^2 \right\} \right|\\
&\geq \left(1-\varepsilon \right) \left| S_r\setminus S_{r/2} \right|\\
& \geq\mu \left| S_i \right|.
\end{align*}
\end{proof}

\section{An auxiliary function}\label{s4}
In order to prove the Harnack inequality, one needs to show that under the hypothesis of Lemma \ref{l3.1}, $u$ is non-negative, not just in a positive portion of section $S_1$, but in a positive portion of any middle-sized section centered in a smaller section $S_{r}\subset S_1$. Having in mind the localization of the contact set, we construct a function which is a subsolution of the minimal equation outside of a small section and is strictly positive in a larger section. This function will later be added to $u$ to force the contact set with $\Gamma$ to stay inside of the intermediate sections. 
\begin{lemma}\label{l4.1}
For a given $R>1$, there exist $m>0$ and $\sigma_0\in(0,2)$ such that the function
$$
F(x):=\min\left(2^m,|x|^{-m}\right)
$$
satisfies
$$
M^-F(x)\geq0,
$$
for every $\sigma\in(\sigma_0,2)$ and $1\leq|x|\leq R$. The constants $m$ and $\sigma_0$ depend only on $\lambda$, $\Lambda$, $R$ and dimension.
\end{lemma}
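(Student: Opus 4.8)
\noindent\emph{Proof strategy.} I would follow the construction of nonlocal barriers from \cite{CS09}, the new ingredient being control of the Monge--Amp\`ere deformation of the kernel near the pole. Fix $x$ with $1\le|x|\le R$. Since $F$ coincides with $|z|^{-m}$ on $\{|z|>1/2\}$, it is smooth --- in particular $C^{1,1}$ --- near $x$ and, being bounded, $M^-F(x)$ is a well defined real number given classically by
$$
M^-F(x)=(2-\sigma)\int_{\R^n}\frac{\lambda\,\delta^+-\Lambda\,\delta^-}{v_x(y)^{\frac{n+\sigma}{2}}}\,dy ,\qquad \delta=\delta(F,x,y)=F(x+y)+F(x-y)-2F(x).
$$
The plan is to split this integral as $\int_{S_{r_0}(x)}+\int_{\R^n\setminus S_{r_0}(x)}$ for a small section $S_{r_0}(x)$ on which $F$ is smooth (working, as throughout the paper, in coordinates that normalize the relevant section), and to show that the first piece is bounded below by a strictly positive quantity that survives the limit $\sigma\uparrow2$, whereas the second piece, together with the Taylor error of the first, carries the full prefactor $(2-\sigma)$ and is therefore negligible once $\sigma$ is close enough to $2$.

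\smallskip

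For the far piece I would use that $F\ge0$ forces $\delta^-\le 2F(x)$, while $v_x\ge r_0^2$ on $\R^n\setminus S_{r_0}(x)$ bounds the kernel by $r_0^{-(n+\sigma)}$. Decomposing $\R^n\setminus S_{r_0}(x)=\bigcup_{k\ge0}\big(S_{2^{k+1}r_0}(x)\setminus S_{2^k r_0}(x)\big)$ and invoking the volume bound $|S_r(x)|\le C r^n$ from Theorem \ref{t2.1}, the total kernel mass of the far region is a geometric series summing to at most $C(n)r_0^{-\sigma}$ (uniformly in $\sigma$ bounded away from $0$). Dropping there the nonnegative term $\lambda\delta^+$, the far piece contributes at least $-C(n)\Lambda(2-\sigma)r_0^{-\sigma}F(x)$, which vanishes as $\sigma\uparrow2$. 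The essential asymmetry to exploit is that the leading part of the near piece comes with a factor $1/(2-\sigma)$ --- from the radial integration of the singular kernel --- which cancels the prefactor and makes it persist in the limit, while the far piece and the cubic error keep the full $(2-\sigma)$.

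\smallskip

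For the near piece I would Taylor expand $\delta(F,x,y)=D^2F(x)y\cdot y+O(|y|^3)$ on $S_{r_0}(x)$; the cubic remainder integrates against the kernel to $O\big((2-\sigma)C(m,R)F(x)\big)$. For the quadratic main term I would pass to linear coordinates that round off the quadratic part of $v_x$ at $x$ (conjugation by $D^2\phi(x)^{1/2}$), which turns the integral into the fractional-Laplacian computation and leaves, in the limit $\sigma\uparrow2$, an angular integral proportional to
$$
\frac{1}{\sqrt{\det D^2\phi(x)}}\left(\lambda\int_{S^{n-1}}(B\theta\cdot\theta)^+\,d\theta-\Lambda\int_{S^{n-1}}(B\theta\cdot\theta)^-\,d\theta\right),
$$
where $B=D^2\phi(x)^{-1/2}\,D^2F(x)\,D^2\phi(x)^{-1/2}$. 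Using the explicit form
$$
D^2F(x)=m\,|x|^{-m-2}\big[(m+2)\,\hat x\otimes\hat x-I\big],\qquad \hat x=\tfrac{x}{|x|},
$$
which for large $m$ is a dominant positive rank-one piece plus a bounded negative piece, together with the two-sided bound on $\det D^2\phi$, I expect this angular expression to be bounded below by $c(n,\lambda,\Lambda)\,F(x)>0$ once $m$ is large enough --- the negative part of $B\theta\cdot\theta$ concentrates in a thin cone and is quantitatively dominated by the trace.

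\smallskip

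Finally I would fix the constants in order: first choose $m=m(n,\lambda,\Lambda,R)$ large enough that the near-piece leading term is bounded below by $c(n,\lambda,\Lambda)F(x)$; then choose $\sigma_0=\sigma_0(n,\lambda,\Lambda,R)\in(0,2)$ close enough to $2$ that, for $\sigma\in(\sigma_0,2)$, both the far-piece loss $C(n)\Lambda(2-\sigma)r_0^{-\sigma}F(x)$ and the cubic error $(2-\sigma)C(m,R)F(x)$ are at most $\tfrac12 c(n,\lambda,\Lambda)F(x)$; this yields $M^-F(x)\ge0$. The main obstacle is the near-piece step: showing that conjugation by $D^2\phi(x)^{-1/2}$ does not spoil the positivity of the leading angular integral --- that the deformed form $B$ remains ``mostly positive'' --- which is precisely where the steepness of the radial profile $|x|^{-m}$ and the control on $\det D^2\phi$ enter, and which is cleanest to carry out after normalizing the relevant section so that the kernel near $x$ is a controlled deformation of that of the fractional Laplacian.
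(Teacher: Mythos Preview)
Your proposal follows the same architecture as the paper's proof: split $M^-F(x)$ into a near piece over a small section $S_r(x)$ and a far piece, show that the leading quadratic contribution of the near piece is strictly positive for $m$ large and survives as $\sigma\uparrow2$, and absorb both the cubic Taylor error and the far piece using the prefactor $(2-\sigma)$. The order in which you fix the constants ($m$ first, then $\sigma_0$) is also the paper's.

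The execution of the near-piece positivity differs. The paper first reduces to the single point $x=e_1$ by exploiting the homogeneity and radial symmetry of $F$ itself (for $|x|>1$ one sets $g(y)=|x|^mF(|x|y)\ge F(y)$ and notes $M^-F(x)\ge C\,M^-F(x/|x|)$, then rotates), and then obtains an explicit pointwise lower bound on $\delta(F,e_1,y)$ for $|y|<\tfrac12$ via the elementary inequalities $(a+b)^{-q}\ge a^{-q}(1-qb/a)$ and $(a+b)^{-q}+(a-b)^{-q}\ge 2a^{-q}+q(q+1)b^2a^{-q-2}$, rather than Taylor-expanding abstractly and conjugating by $D^2\phi(x)^{-1/2}$. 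This replaces your angular condition on $B\theta\cdot\theta$ by the concrete requirement $(m+2)\lambda\int_{\partial S_1}y_1^2\,d\sigma(y)>\Lambda|\partial S_1|$, which is then fed directly into a coarea-type radial estimate. What the paper's route buys is that the choice of $m$ is made at a single fixed point and the uniformity over $1\le|x|\le R$ comes for free from the scaling of $F$; what your route makes more transparent is exactly where the Monge--Amp\`ere deformation enters and why normalization is needed --- your acknowledged ``main obstacle'' is precisely the step the paper handles implicitly by working in normalized coordinates (as confirmed by the subsequent Corollary~\ref{c4.2}, which is the form actually used later). Both arguments ultimately rest on the same normalization hypothesis, so your plan is sound; you could shorten it by adopting the paper's reduction to $x=e_1$ before the near-piece computation.
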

\begin{proof}
Without loss of generality, it is enough to prove the lemma for the vector $x=e_1=(1,0,\ldots,0)$, since for every other point with $|x|=1$ the result will follow by rotation. If $|x|>1$, one can consider the function
$$
g(y):=|x|^mF(|x|y)\geq F(y)
$$
and note that 
$$
M^-F(x)=CM^-g(x/|x|)\geq CM^-F(x/|x|)>0,
$$
for a constant $C>0$. In order to prove the lemma for $x=e_1$, we will use the following elementary inequalities:
\begin{equation}\label{4.1}
(a+b)^{-q}\geq a^{-q}\left(1-q\frac{b}{a}\right),
\end{equation}
\begin{equation}\label{4.2}
(a+b)^{-q}+(a-b)^{-q}\geq 2a^{-q}+q(q+1)b^2a^{-q-2},
\end{equation}
where $a>b>0$ and $q>0$. Using \eqref{4.1} and \eqref{4.2} for $|y|<\frac{1}{2}$, we get
\begin{eqnarray*}
\delta(F,e_1,y)&=&|e_1+y|^{-m}+|e_1-y|^{-m}-2\\ \nonumber
&=&\left(1+|y|^2+2y_1\right)^{-m/2}+\left(1+|y|^2-2y_1\right)^{-m/2}-2\\ \nonumber
&\geq&2\left(1+|y|^2\right)^{-m/2}+m(m+2)y_1^2\left(1+|y|^2\right)^{-m/2-2}-2\\ \nonumber
&\geq&m\left(-|y|^2+(m+2)y_1^2-\frac{1}{2}(m+2)(m+4)y_1^2|y|^2\right).
\end{eqnarray*}
We choose $m>0$ large enough to guarantee 
\begin{equation}\label{4.3}
(m+2)\lambda\int_{\partial S_1}y_1^2\,d\sigma(y)-\Lambda|\partial S_1|=:\delta_0>0.
\end{equation}
Then we make use of the above relation to estimate the part of the integral in $M^-F(e_1)$ over the set $S_r$ (with $r>0$ small). More precisely,

\begin{eqnarray*}
M^-F(e_1)&=&(2-\sigma)\int_{S_r}\frac{\lambda\delta^+-\Lambda\delta^-}{v_x^{\frac{n+\sigma}{2}}(y)}\,dy\\ 
& & +(2-\sigma)\int_{\R^n\setminus S_r}\frac{\lambda\delta^+-\Lambda\delta^-}{v_x^{\frac{n+\sigma}{2}}(y)}\,dy\\ 
&\geq&(2-\sigma)C\int_0^r\frac{\lambda m\delta_0s^2-\frac{1}{2}m(m+2)(m+4)C\Lambda s^4}{s^{n+\sigma}}\,ds\\ 
&&-(2-\sigma)\int_{\R^n\setminus S_r}\Lambda\frac{2^m}{v_x^{\frac{n+\sigma}{2}}(y)}\, dy\\ 
&\geq&cr^{2-\sigma}m\delta_0-m(m+2)(m+4)C\frac{2-\sigma}{4-\sigma}r^{4-\sigma}\\
&&-\frac{2-\sigma}{\sigma}C2^{m+1}r^{-\sigma},
\end{eqnarray*}
where $c$ and $C$ are positive constants (independent of $\sigma$). Note that we used \eqref{4.3} to bound the first integral and the fact that $0\leq F(x)\leq 2^m$ to bound the second. We finish the proof by choosing $\sigma_0$ close enough to 2, so that the factor $(2-\sigma)$ forces the second and the third terms in the last inequality to be very small to conclude
$$
M^-F(e_1)\geq\frac{1}{2}cr^{2-\sigma}m\delta_0>0.
$$
\end{proof}

Arguing as in Corollary 9.2 of \cite{CS09}, with the obvious adaptations, we obtain the following corollary.
 
\begin{corollary}\label{c4.1}
For any $\sigma_0\in(0,2)$ and $r>0$, there exist $m>0$ and $s>0$ such that the function
$$
F(x)=\min\left(s^{-m},|x|^{-m}\right)
$$
is a subsolution, i.e.,
$$
M^-F(x)\geq0,
$$
for all $\sigma\in(\sigma_0,2)$ and $|x|>r$, where the constants $m$ and $s$ depend only on $\lambda$, $\Lambda$ and the dimension.
\end{corollary}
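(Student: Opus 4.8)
The plan is to follow the construction behind Corollary 9.2 in \cite{CS09}, keeping the choice of parameters that makes the estimate of Lemma \ref{l4.1} survive for every $\sigma$ above the prescribed $\sigma_{0}$, rather than only for $\sigma$ close to $2$. I take $s\le r$ (say $s=r/2$), so that on the region where the conclusion is claimed, $\{|x|>r\}$, the function $F(x)=\min(s^{-m},|x|^{-m})$ agrees with the pure profile $|x|^{-m}$; there $F$ is smooth and bounded, so $M^{-}F(x)$ is a finite integral (Remark \ref{r2.2}). The profile $|x|^{-m}$ is strictly convex radially — radial second derivative of order $m^{2}$ — and concave in the tangential directions — of order $m$ — so that once $m$ is large the positive part of $\delta(F,\cdot,y)$ for small $y$ prevails. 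Fix $x$ with $R:=|x|>r$; after normalising via Theorems \ref{t2.1} and \ref{t2.3}, which on the relevant scales makes $v_{x}(y)$ comparable to $|y-x|^{2}$ (at the price of universal constants, possibly depending on the normalisation map), it is enough to show $M^{-}F(x)\ge 0$, which reduces essentially to the Euclidean computation of \cite{CS09}.

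Over a small section $S_{\rho}(x)$ with $\rho\ll R$, the near field is treated exactly as in Lemma \ref{l4.1}: expanding $\delta$ by the elementary inequalities \eqref{4.1}--\eqref{4.2}, the spherical average of $\lambda\delta^{+}-\Lambda\delta^{-}$ is positive once $m\ge m_{*}(\lambda,\Lambda,n)$, and the crucial cancellation of the $(2-\sigma)$ prefactor against $\int_{0}^{\rho}t^{1-\sigma}\,dt=\rho^{2-\sigma}/(2-\sigma)$ leaves a contribution of order $c\,\lambda m^{2}R^{-m-2}\rho^{2-\sigma}$, with $c$ independent of $\sigma$, minus an $O((2-\sigma)m^{3}R^{-m-4}\rho^{4-\sigma})$ remainder killed by small $\rho$. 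Separately I retain the nonnegative contribution of the cap: for $y$ with $x\pm y$ in the cap ball $B_{s}$ — that is $|y|\simeq R\gg s$ — one has $\delta(F,x,y)\approx s^{-m}>0$, which contributes at least $c'(2-\sigma)\lambda\,s^{n-m}R^{-n-\sigma}$; every other positive term of $\lambda\delta^{+}$ is discarded. The negative terms $\Lambda\delta^{-}$ — the tangential part near $y=0$, the ``outward'' part at intermediate $|y|$, and the far tail $|y|>R$ — are bounded, once the $(2-\sigma)$ has been absorbed, by $C\Lambda m R^{-m-2}$ when $\sigma$ is close to $2$ and by $C\sigma_{0}^{-1}\Lambda m R^{-m-\sigma}$ in general.

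The conclusion then splits according to the size of $2-\sigma$. If $2-\sigma$ is below a fixed threshold $\eta_{0}=\eta_{0}(\lambda,\Lambda,n)$, then $\rho^{2-\sigma}\approx 1$, all negative terms are $O(\Lambda m R^{-m-2})$, and the near-field term $c\lambda m^{2}R^{-m-2}$ wins as soon as $m>\Lambda/\lambda$. If $2-\sigma\ge\eta_{0}$, the cap term does the job: $c'(2-\sigma)\lambda s^{n-m}R^{-n-\sigma}\ge c'\eta_{0}\lambda s^{n-m}R^{-n-\sigma}$, and since $R/s>r/s=2$ one has $s^{n-m}R^{-n-\sigma}=(R/s)^{m-n}R^{-m-\sigma}\ge 2^{m-n}R^{-m-\sigma}$, which dominates $C\sigma_{0}^{-1}\Lambda m R^{-m-\sigma}$ once $m$ is large in terms of $\lambda,\Lambda,n,\sigma_{0}$. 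Choosing $m$ large enough for both regimes, then $\rho$ small and $s=r/2$, gives $M^{-}F(x)\ge 0$ for all $|x|>r$ and all $\sigma\in(\sigma_{0},2)$, which is the claim.

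The main obstacle is precisely this double uniformity — as $\sigma\uparrow 2$ and as $|x|\to\infty$ at the same time. One cannot, as in Lemma \ref{l4.1}, render the nonlocal tail negligible by forcing $\sigma$ near $2$; and, because $M^{-}$ is covariant under dilations while the cap parameter $s$ is not, one cannot rescale $|x|$ down to a fixed value without letting the cap run away. The way out is to keep the cap at a moderate level ($s^{-m}$, as opposed to the exponentially large $2^{m}$ of Lemma \ref{l4.1}) and to harvest from the tail the positive contribution created by the multiplicative growth of $|x|^{-m}$ toward the origin — a genuinely nonlocal mechanism with no second-order analogue. What remains is bookkeeping: fixing the order in which $\eta_{0}$, $m$, $\rho$, $s$ are chosen, and checking that the universal constants from Theorems \ref{t2.1}--\ref{t2.3} do not compound across scales.
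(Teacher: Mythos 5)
Your proposal is correct and takes essentially the same route as the paper, whose entire proof is the remark that one argues as in Corollary 9.2 of \cite{CS09}: you reconstruct precisely that argument, with the near-field radial convexity of $|x|^{-m}$ handling $\sigma$ close to $2$ and the positive cap contribution of order $(2-\sigma)\lambda s^{n-m}|x|^{-n-\sigma}$ handling $\sigma$ bounded away from $2$, the key improvement over the crude tail bound of Lemma \ref{l4.1} being that $\delta^{-}\le 2F(x)$ rather than $2\sup F$. The normalization and constant-ordering issues you defer at the end are exactly the ones the paper also leaves implicit under ``the obvious adaptations''.
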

\begin{corollary}\label{c4.2}
For given $r>0$, $R>1$ and $\sigma_0\in(0,2)$, there exist $s>0$ and $m>0$ such that the function
$$
g(x):=\min\left(s^{-m},|T_r^{-1}x|^{-m}\right)
$$
satisfies
$$
M^-g(x)\geq0,
$$
for $x\in\R^n\setminus S_r$, where $T_r$ is the normalization map of the section $S_r$.
\end{corollary}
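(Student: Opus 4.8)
The plan is to reduce the statement to Corollary \ref{c4.1} by normalising the section $S_r$, following the principle recalled in Section \ref{s2}: once a section is normalised, the deforming kernels become a bounded deformation of the kernel of the fractional Laplacian, and estimates transport back and forth via the normalisation map. Let $T_r$ be the normalisation map of $S_r$ and set $z:=T_r^{-1}x$, so that $S_r$ becomes a convex set $\widetilde S:=T_r^{-1}(S_r)$ with $B_{\alpha_n}\subset\widetilde S\subset B_1$ (Theorem \ref{t2.1}), while $g$ becomes exactly $F(z)=\min\left(s^{-m},|z|^{-m}\right)$. Carrying out this affine change of variables in the integral defining $M^-$ --- the second difference $\delta$ is invariant, the substitution of the increment variable contributes a Jacobian $|\det A|$, with $A$ the linear part of $T_r$, and the induced rescaling $\widehat\phi:=(\phi\circ T_r)/|\det A|^{2/n}$ of the Monge-Amp\`ere function contributes a further power of $|\det A|$ --- one obtains a covariance identity
$$
M^- w(x)=c\,\big(\widehat M^-(w\circ T_r)\big)(z),\qquad c=|\det A|^{-\sigma/n}>0,
$$
where $\widehat M^-$ is the minimal operator attached to $\widehat\phi$, which is again a Monge-Amp\`ere solution with the same bounds $g_-\le\det D^2\widehat\phi\le g^+$ and whose sections are the $T_r^{-1}$-images of those of $\phi$. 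Since $c>0$, the inequality $M^- g(x)\ge0$ is equivalent to $\widehat M^- F(z)\ge0$, so it suffices to prove $\widehat M^- F\ge0$ on $\R^n\setminus\widetilde S$.

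Because $S_r$ has been normalised, Theorems \ref{t2.1} and \ref{t2.3} guarantee that the sections of $\widehat\phi$ of comparable size are comparable to Euclidean balls, so the kernels defining $\widehat M^-$ are a bounded, universal deformation of the fractional Laplacian kernel of order $\sigma$; hence the barrier construction of Lemma \ref{l4.1} and Corollary \ref{c4.1} applies with $M^-$ replaced by $\widehat M^-$, the constants now also depending on the (universal, dimensional) distortion constants of those theorems. Invoking Corollary \ref{c4.1} in this normalised geometry with threshold radius $\alpha_n/2$ and the prescribed $\sigma_0$, we get $m>0$ and $s>0$, depending only on $\lambda$, $\Lambda$ and $n$, such that $F(z)=\min\left(s^{-m},|z|^{-m}\right)$ satisfies $\widehat M^- F(z)\ge0$ for every $\sigma\in(\sigma_0,2)$ and every $|z|>\alpha_n/2$. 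If $x\in\R^n\setminus S_r$, then $z=T_r^{-1}x\notin\widetilde S$, and since $B_{\alpha_n}\subset\widetilde S$ this forces $|z|\ge\alpha_n>\alpha_n/2$; therefore $\widehat M^- F(z)\ge0$, and the covariance identity gives $M^- g(x)\ge0$. The number $R>1$ only fixes the outer scale at which this barrier is later used and plays no further role in the argument.

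The main obstacle is making this normalisation argument rigorous. The covariance identity is essentially bookkeeping once the Jacobian and the rescaling of $\phi$ are tracked. The delicate point is that the barrier estimate must survive the passage to the normalised geometry: one has to check that the computation behind Lemma \ref{l4.1} --- which, through the elementary inequalities \eqref{4.1}--\eqref{4.3}, isolates a strictly positive second-order contribution that, after multiplication by $(2-\sigma)$, dominates the remaining lower-order and tail terms --- remains valid when the exact kernel $|\eta|^{-(n+\sigma)}$ is replaced by one whose level sets are normalised Monge-Amp\`ere sections rather than round balls. This is where the Monge-Amp\`ere structure enters essentially: the engulfing property (Theorem \ref{t2.2}) together with the two-sided control of normalised sections by balls (Theorem \ref{t2.3}) pins these kernels down on the range of increments governing the sign of the dominant term, so that the choice of $m$ enforcing \eqref{4.3} still delivers a positive lower bound, exactly as in the proof of Lemma \ref{l4.1}; undoing the normalisation then completes the proof.
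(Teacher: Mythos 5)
Your proof is correct and follows essentially the same route as the paper: write $g=F\circ T_r^{-1}$, use the affine covariance of $M^-$ (which produces a positive multiplicative constant involving $\det T_r$), and invoke Corollary \ref{c4.1} in the normalized geometry. The paper compresses this into a one-line covariance identity; your additional verification that the normalized operator stays in the same structural class is the implicit content of that identity, so nothing is missing.
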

\begin{proof}
Since 
$$
g(x)=F\left(T_r^{-1}x\right),\,\, \hbox{ for }\,\,x\in\R^n
$$
and
$$
M^-g(x)=C|\det T_r|M^-F\left(T_r^{-1}x\right)\geq0,
$$
for all $x\in\R^n\setminus S_r$, the result follows from Corollary \ref{c4.1}.
\end{proof}

We are now ready to construct the function which  will later be added to $u$ to force the contact set with $\Gamma$ to stay inside of the intermediate sections.
\begin{lemma}\label{l4.2}
For a given $\sigma_0\in(0,2)$, there exists a continuous function $\psi:\R^n\rightarrow\R$ satisfying the following conditions:
\begin{itemize}
\item $\psi=0$ in $\R^n\setminus S_{2\tau}$;
\item $\psi>2$ in $S_\tau$;
\item $M^-\psi>-\varphi$ in $\R^n$, for some positive function $\varphi$ supported in $\overline{S}_{1/4}$ for every $\sigma>\sigma_0$.
\end{itemize}
Above, $\tau>0$ is as in Proposition \ref{p3.1}.
\end{lemma}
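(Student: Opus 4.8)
The plan is to produce $\psi$ by truncating, smoothing and dilating the radial subsolution furnished by Corollary~\ref{c4.2}. Fix the given $\sigma_0$ and apply Corollary~\ref{c4.2} with $r=1/4$ to obtain $m,s>0$ and a bounded continuous function $g$, capped on a small inner section, with $M^-g\ge 0$ in $\R^n\setminus S_{1/4}$; its super-level sets are, up to the normalization map, comparable to the sections $S_\rho$. Exploiting the freedom in the choice of $s$ (a smaller $s$ works equally well), and replacing $g$ by a $C^{1,1}$ function that agrees with it outside the cap, lies above it, and removes the concave corner, I may further assume that the cap set $\{g=\max g\}$ is contained in $S_{1/4}$, that $g\in C^{1,1}(\R^n)$, and that still $M^-g\ge 0$ in $\R^n\setminus S_{1/4}$ — this last point because for $x\notin S_{1/4}$ the modified function agrees with $g$ at $x$ and lies above it elsewhere, and such a domination can only raise $M^-$: if $w_1\ge w_2$ with $w_1(x)=w_2(x)$ then $\delta(w_1,x,\cdot)\ge\delta(w_2,x,\cdot)$, whence $M^-w_1(x)\ge M^-w_2(x)$.

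Next I would fix a level $t$ with $0<t<\max g$, using the quantitative comparison of sections (Theorems~\ref{t2.1}--\ref{t2.4}) to arrange
$$
S_\tau\ \subset\ \{g>t\}\ \subset\ S_{2\tau},
$$
and then set $\psi:=c\,(g-t)^+$ with $c>0$ chosen so that $c(\max g-t)>2$. Then $\psi$ is continuous, $\psi\equiv 0$ in $\R^n\setminus S_{2\tau}$ (since $\{g>t\}\subset S_{2\tau}$), and $\psi>2$ on the cap set, hence — $g$ being monotone on its super-level sets — on all of $S_\tau$. Moreover $\psi$ is $C^{1,1}$ off the level set $\{g=t\}$, where it has an outward (convex) corner with non-vanishing one-sided gradient, and that set lies outside $S_\tau\supset\overline S_{1/4}$.

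It remains to check the third bullet. On the open set $\{g>t\}\setminus\overline S_{1/4}$ one has $\psi=c(g-t)$ locally, $\psi\in C^{1,1}$ there, and, since $M^-$ kills additive constants and is positively homogeneous while $(g-t)^+\ge g-t$ with equality at the base point, the monotonicity above gives $M^-\psi(x)\ge c\,M^-g(x)\ge 0$. On the open set $\{g<t\}$ one has $\psi\equiv 0$, so $\delta(\psi,x,y)=\psi(x+y)+\psi(x-y)\ge 0$ because $\psi(x)=0$ and $\psi\ge 0$; as $\psi>0$ on a set of positive measure and $v_x(y)^{-\frac{n+\sigma}{2}}>0$, this forces $M^-\psi(x)=(2-\sigma)\lambda\int_{\R^n}\delta(\psi,x,y)\,v_x(y)^{-\frac{n+\sigma}{2}}\,dy>0$. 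At a point of the convex corner $\{g=t\}$, no $C^2$ function can touch $\psi$ from above, so the viscosity inequality holds there vacuously. Thus $M^-\psi>0$ in $\R^n\setminus\overline S_{1/4}$. Finally, near $\overline S_{1/4}$ the function $\psi$ coincides with the $C^{1,1}$ function $c(g-t)$ and the corner $\{g=t\}$ is far away, so $M^-\psi$ is a bounded continuous function on $\overline S_{1/4}$ by Lemma~\ref{l2.3}; letting $-A$ be a lower bound for it there and putting $\varphi:=(A+1)\chi_{\overline S_{1/4}}$ gives a positive function supported in $\overline S_{1/4}$ with $M^-\psi>-\varphi$ on $\R^n$, uniformly in $\sigma\in(\sigma_0,2)$.

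The delicate step is the geometric one: simultaneously placing the cap of the single radial building block strictly inside the small section $S_{1/4}$ and trapping an intermediate super-level set between $S_\tau$ and $S_{2\tau}$. Sections of very different sizes need not be affinely equivalent, so this requires a careful choice of normalization map — normalizing, say, the large section $S_{2\tau}$, so that by Theorem~\ref{t2.3} all of $S_{1/4}$, $S_\tau$, $S_{2\tau}$ become comparable to concentric balls — together with the engulfing property and the deformation estimates of Theorem~\ref{t2.4}; once the relevant sets are correctly located, the rest of the verification is routine. A secondary point, already addressed above, is that $\psi$ must be kept $C^{1,1}$ near $\overline S_{1/4}$ (by smoothing the cap's corner) so that $M^-\psi$ is a genuine bounded function there rather than $-\infty$ in the viscosity sense.
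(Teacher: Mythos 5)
Your construction is essentially the paper's: both take the radial subsolution $|T_{1/4}^{-1}x|^{-m}$ furnished by Corollary~\ref{c4.2}, cap it by a paraboloid near the centre so that it is $C^{1,1}$ there (which is exactly what produces the bounded positive $\varphi$ supported in $\overline{S}_{1/4}$), truncate it to vanish outside $S_{2\tau}$, and rescale so that it exceeds $2$ on $S_\tau$. Your handling of the outer truncation --- cutting at a level set of $g$ and noting that at the resulting convex corner one has $\delta\geq 0$, or that no test function touches from above --- is if anything more careful than the paper's, which simply sets $\psi=0$ outside $S_{2\tau}$; and the geometric trapping $S_\tau\subset\{g>t\}\subset S_{2\tau}$ that you rightly flag as the delicate point is glossed over in the paper's proof as well.
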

\begin{proof}

We prove the lemma by constructing the function $\psi$. Let $s>0$ and $m>0$ be as in Corollary \ref{c4.1} with $r=1/4$. Set
$$
\frac{1}{c}\psi(x):=\left\{
\begin{array}{ll}
0\textrm{ in }\R^n\setminus S_{2\tau}\\
\\
 |T^{-1}_{\frac{1}{4}}x|^{-m}-(2\tau)^{-m}  \textrm{ in } S_{2\tau}\setminus S_s\\
 \\
q\textrm{ in } S_s,
\end{array}
\right.
$$
where $q$ is a quadratic paraboloid chosen such that $\psi$ is $C^{1,1}$ across $\partial S_s$. The constant $c$ is chosen such that $\psi>2$ in $S_\tau$. Since $\psi\in C^{1,1}(S_{2\tau})$, then from Lemma \ref{l2.3}, we have that $M^-\psi\in C(S_{2\tau})$. Corollary \ref{c4.1} then gives $M^-\psi\geq0$ in $\R^n\setminus S_{1/4}$, which completes the proof.
\end{proof}

\section{Towards the Harnack inequality}\label{s5}
In this section we prove a lemma which bridges the gap between a pointwise estimate and an estimate in measure. This is the main tool towards the proof of the Harnack inequality, as in \cite{CC95,CLU14,CS09}. It is here that we will make use of the function $\psi$ from Lemma \ref{l4.2}.
\begin{lemma}\label{l5.1}
Let $\sigma\in(0,2)$ and $\sigma_0\in(0,\sigma)$. There exist constants $\varepsilon_0>0$, $\eta\in(0,1)$ and $M>1$, depending only on $\sigma_0$, $\lambda$, $\Lambda$ and $n$, such that if, with $\tau>0$ as in Proposition \ref{l3.1}, 
$$
u\geq0  \mbox{ in } \R^n; \qquad \inf_{S_\tau}u\leq1; \qquad M^-u \leq \varepsilon_0 \mbox{ in } S_{2\tau},
$$
then
$$\left| \left\{ u\leq M\}\cap S_1 \right\} \right| > \eta.$$
\end{lemma}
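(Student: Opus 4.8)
The plan is to combine the auxiliary function $\psi$ from Lemma \ref{l4.2} with $u$ and feed the sum into the ABP-type machinery of Section \ref{s3}. Concretely, set $w := \psi - u$, where $\psi$ is the function constructed in Lemma \ref{l4.2} for the given $\sigma_0$. Since $u\geq 0$ everywhere and $\psi = 0$ outside $S_{2\tau}$, we have $w\leq 0$ in $\R^n\setminus S_{2\tau}$; after a harmless rescaling of the section (replacing the role of $S_1$ in Lemma \ref{l3.1} by $S_{2\tau}$, which costs only universal constants via Theorems \ref{t2.1}--\ref{t2.4}) we may treat $w$ as a function that is non-positive outside the relevant section. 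By the ellipticity/subadditivity of $M^+$ (Definition \ref{d2.3} together with Lemma \ref{l2.4}), $M^+ w = M^+(\psi - u) \geq M^-\psi - M^+ u \geq -\varphi - \varepsilon_0 =: -g$ in $S_{2\tau}$, where $g$ is bounded, continuous, and — crucially — $g = \varphi + \varepsilon_0$ with $\varphi$ supported in $\overline{S}_{1/4}$. So $w$ is a subsolution of the maximal equation with right-hand side controlled, of exactly the type handled by Corollary \ref{c3.2} and Theorem \ref{t3.1}.

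Next I would record the key point that makes the whole scheme work: because $\psi > 2$ on $S_\tau$ while $\inf_{S_\tau} u \leq 1$, the infimum of $w$ over $S_\tau$ satisfies $\sup_{S_\tau} w \geq \psi(x_*) - u(x_*) \geq 2 - 1 = 1$ at a near-minimizing point $x_*$ of $u$. Hence $\sup w \geq 1$ (or $\geq c > 0$ after the rescaling), so the concave envelope $\Gamma$ of $w$ satisfies $\sup\Gamma\geq c$. Applying Corollary \ref{c3.2} to $w$ gives
$$
c \leq C(\sup w)^n \leq \left|\bigcup_{x\in\Sigma} S_r(x)\right|,
$$
where $\Sigma = \{w = \Gamma\}\cap S_{2\tau}$ and each $S_r(x)$ is the section from Corollary \ref{c3.1}. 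By Remark \ref{r3.1}, the contact set $\Sigma$ must be contained in $\{g > 0\}$... but this is automatic here since $g \geq \varepsilon_0 > 0$ everywhere; the finer information is that wherever $x\in\Sigma$ lies \emph{outside} $\overline{S}_{1/4}$, one has $M^+w(x) \geq -\varepsilon_0$ with $\varepsilon_0$ as small as we like, which is precisely what forces quadratic detachment estimates with a \emph{small} constant. Combining the covering of $\Sigma$ by the sections $S_i$ of Theorem \ref{t3.1} (bounded overlaps, $|\nabla\Gamma(\overline S_i)|\leq C(\max_{\overline S_i}g)^n|S_i|$) with the lower volume bound, and then invoking the engulfing property (Theorem \ref{t2.2}) so that each $\kappa S_i$ that meets $S_1$ is comparable to a fixed section, I would extract from the second conclusion of Theorem \ref{t3.1} — the estimate $|\{y\in\kappa S_i : w(y)\geq \Gamma(y) - C(\max g)r_i^2\}|\geq \mu|S_i|$ — a set of definite measure inside $S_1$ on which $w$ stays close to its tangent plane, hence on which $w$, and therefore $-u = w - \psi$, is bounded above by a universal constant; equivalently $u \leq M$ there. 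Choosing $\varepsilon_0$ small enough (so that the detachment constants $C\varepsilon_0 r_i^2$ stay below a threshold) and $M$ large enough then yields $|\{u\leq M\}\cap S_1| > \eta$ for a universal $\eta\in(0,1)$.

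The main obstacle I expect is bookkeeping the geometry of the normalized sections so that ``a positive-measure portion of the good set lands inside $S_1$ rather than being scattered through the annulus $S_{2\tau}\setminus S_1$.'' The role of $\varphi$ being supported in $\overline{S}_{1/4}$ is exactly to make the contact set $\Sigma$ avoid the region where $M^+w$ is merely $\geq -\varphi$ (large) and live instead in the region where $M^+ w \geq -\varepsilon_0$, so that Corollary \ref{c3.1}'s constant $Cf(x)r^2$ is genuinely small on the sections we care about; but one must check carefully, using parts (i)--(iii) of Theorem \ref{t2.4} and Proposition \ref{p3.1}, that the sections $S_r(x_i)$ centered at contact points, once dilated by $\kappa$, still intersect $S_1$ in a set of measure bounded below — this is where the engulfing property and the volume-doubling estimates (iv)--(v) of Theorem \ref{t2.4} do the real work. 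A secondary technical point is that $\psi$ is only $C^{1,1}$ across $\partial S_s$ and across $\partial S_{2\tau}$, so one must justify that $M^-\psi$ is evaluated classically there (Lemma \ref{l2.3}) and that adding $\psi$ to the viscosity subsolution $u$ preserves the viscosity subsolution property for $w$ with the stated right-hand side — this follows from Remark \ref{r2.3} and Lemma \ref{l2.4} but should be stated. Modulo these geometric estimates, the argument is the direct analogue of the proof in \cite{CC95,CS09,CLU14}, transported through the normalization map.
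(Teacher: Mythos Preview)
Your architecture matches the paper's: set $w=\psi-u$, feed it into the ABP machinery, and read off a set of definite measure on which $w$ is close to $\Gamma$, hence $u\le \max\psi + C=:M$. Two corrections are needed, one small and one substantive.

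First, the inequality $M^+w\ge M^-\psi - M^+u$ is correct but useless here, since the hypothesis is $M^-u\le\varepsilon_0$, not $M^+u\le\varepsilon_0$. The inequality you want (and the one the paper uses) is $M^+(\psi-u)\ge M^-\psi - M^-u$: for any $L\in\mathcal L$ one has $L(\psi-u)=L\psi-Lu\ge M^-\psi-Lu$, and taking the supremum over $L$ yields $M^+(\psi-u)\ge M^-\psi-\inf_L Lu=M^-\psi-M^-u$.

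Second, and more seriously, you have the localization mechanism backwards. The compact support of $\varphi$ in $\overline S_{1/4}$ does \emph{not} push the contact set $\Sigma$ away from $S_{1/4}$; as you yourself note, Remark~\ref{r3.1} gives no information because the right-hand side is everywhere positive, so $\Sigma$ may lie anywhere in $S_{2\tau}$. Nor do you need the detachment constant $Cf(x)r^2$ to be small --- bounded is enough. The actual mechanism is through the ABP sum. From Theorem~\ref{t3.1} and $\sup w\ge 1$,
\[
1\;\le\; C\Bigl(\sum_i\bigl(\max_{\overline S_i}(\varphi+\varepsilon_0)\bigr)^n|S_i|\Bigr)^{1/n}
\;\le\; C\varepsilon_0 \;+\; C\Bigl(\sum_i\bigl(\max_{\overline S_i}\varphi\bigr)^n|S_i|\Bigr)^{1/n},
\]
the $\varepsilon_0$ term coming from the bounded-overlap bound $\sum_i|S_i|\le C$. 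For $\varepsilon_0$ small this leaves $\tfrac12\le C\bigl(\sum_i(\max_{\overline S_i}\varphi)^n|S_i|\bigr)^{1/n}$, and since $\varphi\equiv 0$ off $\overline S_{1/4}$, only those $S_i$ with $S_i\cap S_{1/4}\neq\emptyset$ survive, forcing $\sum_{S_i\cap S_{1/4}\neq\emptyset}|S_i|\ge c$. Each such $S_i$ has height $\le r_0<1$, so by Theorem~\ref{t2.4} the dilates $\kappa S_i$ sit inside $S_{1/2}\subset S_1$; one then applies the second conclusion of Theorem~\ref{t3.1} \emph{to this subfamily only} and sums. The engulfing and doubling properties you invoke are indeed used to contain $\kappa S_i$ in $S_{1/2}$, but the reason there is mass to contain is the splitting of the ABP sum above, not any avoidance property of $\Sigma$.
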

\begin{proof}
Note that if $\sigma$ is far from $2$, one can prove the lemma adapting the ideas from \cite{S06}, but as in \cite{CS09} we argue differently  to guarantee an estimate that remains uniform as $\sigma\rightarrow2$. 

Define $\varrho :=\psi-u$, where $\psi$ is the function from Lemma \ref{l4.2}, and observe that
$$
M^+\varrho\geq M^-\psi-M^-u\geq-\varphi-\varepsilon_0.
$$
Let now $\Gamma$ be the concave envelope of $\varrho$ in $S_{4\tau}$. Applying Theorem \ref{t3.1} (rescaled) to $\varrho$, we get a family of sections $S_i$ such that
\begin{eqnarray*}
\max_{S_{2\tau}} \varrho&\leq& C|\nabla\Gamma(S_{2\tau})|^{1/n}\leq C\left(\displaystyle\sum_i|\nabla\Gamma(\overline{S}_i)|\right)^{1/n}\nonumber\\
&\leq&\left(C\displaystyle\sum_i(\max_{S_i}(\varphi+\varepsilon)^+)^n|S_i|\right)^{1/n}\nonumber\\
&\leq&C\varepsilon_0+C\left(\displaystyle\sum_i(\max_{S_i}(\varphi^+)^n|S_i|\right)^{1/n},
\end{eqnarray*}
with $C>0$ constant. On the other hand, we have $\displaystyle\max_{S_{2\tau}}\varrho\geq1$, since $\displaystyle\inf_{S_\tau}u\leq1$ and $\psi\geq2$ in $S_\tau$, and therefore
$$
1\leq C\varepsilon_0+C\left(\displaystyle\sum_i(\max_{S_i}(\varphi^+)^n|S_i|\right)^{1/n}.
$$
Hence, if $\varepsilon_0$ is small enough, one has 
$$
\frac{1}{2}\leq C\left(\displaystyle\sum_i(\max_{S_i}(\varphi^+)^n|S_i|\right)^{1/n}.
$$
Also, since $\supp\,\varphi\subset S_{1/4}$, 
$$
\frac{1}{2}\leq C\left(\sum_{S_i\cap S_{1/4}\neq\emptyset}|S_i|\right)^{1/n}
$$
or else
\begin{equation}\label{5.1}
\sum_{S_i\cap S_{1/4}\neq\emptyset}|S_i|\geq C.
\end{equation}
Also, the height of $S_i$ is bounded by $2^{-1/(2-\sigma)}<1$. Hence, every time $S_i$ intersects $S_{1/4}$, one has $\kappa S_i\subset S_{1/4}$, for $\kappa>0$ as in Theorem \ref{t3.1}. An application of Theorem \ref{t3.1} then gives
\begin{equation}\label{5.2}
\left| \left\{ y\in \kappa S_i \, : \, \varrho(y)\geq\Gamma(y)-Cr_i^2\right\}\right|\geq\mu\left|S_i\right|,
\end{equation}
and $Cr_i^2<C$. Observe that the family $\{\kappa S_i\}$, where $S_i\cap S_{1/4}\neq\emptyset$, is an open covering for $\bigcup_{i}\overline{S}_i$ and is contained in $S_{1/2}$. By taking a subcovering with bounded overlapping and using \eqref{5.1} and \eqref{5.2}, one gets
\begin{eqnarray*}
&&\left| \left\{ y\in S_{1/2}\,:\, \varrho(y)\geq\Gamma(y)-C \right\} \right| \\
&\geq&\left|\bigcup_{i}\{y\in\kappa S_i\,:\,\varrho(y)\geq\Gamma(y)-C\}\right|\\
&\geq&C_1\sum_{i} \left| \left\{ y\in\kappa S_i\,:\,\varrho(y)\geq\Gamma(y)-C \right\} \right| \\
&\geq&C_1c_1.
\end{eqnarray*}
Therefore, if $l:=\displaystyle\max_{S_{1/2}}\psi$, then
$$
|\{y\in S_{1/2}\,:\,u(y)\leq l+C\}|\geq C_1c_1.
$$
Hence, for $M:=l+C$, noting that $S_{1/2}\subset S_1$, one has
$$
|\{y\in S_1\,:\, u(y)\leq M\}|\geq C_1c_1,
$$
which completes the proof.
\end{proof}

As a consequence, using a variant of Calder\'on-Zygmund decomposition (Lemma \ref{l2.1}), as in \cite[Theorem 3]{CG97}, from Lemma \ref{l5.1} we get the following result.
\begin{theorem}\label{t5.1}
Let $z\in\R^n$, $u\geq0$ in $\R^n$, $\displaystyle\inf_{S_r(z)}u\leq1$, $M^-u\leq\varepsilon_0$ in $S_{2\tau}(z)$. There exist constants $\rho\in(0,1)$, $C>0$ and $\varepsilon>0$ such that
$$
|\{u>t\}\cap S_\rho(z)|\leq Ct^{-\varepsilon}|S_r(z)|,\,\,\forall t>0.
$$
Here $C$ and $\varepsilon>0$ depend only on $\lambda$, $\Lambda$, $g_-$, $g^+$ and $n$.
\end{theorem}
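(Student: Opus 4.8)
The strategy is the classical passage from the $L^\varepsilon$-type measure estimate of Lemma~\ref{l5.1} to a power-decay estimate, via the Calder\'on--Zygmund-type covering of Lemma~\ref{l2.1}, carried out in the Monge--Amp\`ere geometry exactly as in \cite[Theorem~3]{CG97} and \cite{CS09}. First I would normalize: after composing with the normalization map of $S_r(z)$ we may assume $z=0$ and that the relevant sections are comparable to Euclidean balls, at the price of constants depending on the normalization (hence on $g_-$, $g^+$, $n$); this is the standing convention announced in Section~\ref{s2}. The goal becomes to show $|\{u>t\}\cap S_\rho|\le Ct^{-\varepsilon}|S_r|$ for a fixed small $\rho\in(0,1)$ and all $t>0$.

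The core is an iteration. Set $\theta\in(0,1)$ to be the constant produced by Lemma~\ref{l5.1} (more precisely $\theta:=1-\eta$, the complementary density), and let $M>1$ be the constant from that lemma. Define the super-level sets $A_j:=\{u>M^j\}\cap S_\rho$. The claim to be proved by induction on $j$ is $|A_j|\le(1-\mu)^{j}|S_r|$ for a suitable $\mu\in(0,1)$ and $\rho$ small enough. The inductive step runs as follows: apply Lemma~\ref{l2.1} to the open set $A:=A_{j+1}$ (or rather a slightly enlarged open set containing it) with the density parameter $\theta$, obtaining a countable family of sections $\{S_{r_k}(x_k)\}$ with $x_k\in A$, covering $A$, with $|A\cap S_{r_k}(x_k)|=\theta|S_{r_k}(x_k)|$, and with $|A|<c(\theta)\,|\bigcup_k S_{r_k}(x_k)|$. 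On each such section one argues by contradiction: if the dilated section $S_{\tau r_k}(x_k)$ were \emph{not} contained in $A_j$, i.e.\ there were a point where $u\le M^{j+1}$ inside, then the rescaled function $\tilde u:=u/M^{j+1}$ would satisfy $\tilde u\ge0$ in $\R^n$, $\inf_{S_{\tau r_k}(x_k)}\tilde u\le1$, and $M^-\tilde u=M^{-(j+1)}M^-u\le M^{-(j+1)}\varepsilon_0\le\varepsilon_0$ in $S_{2\tau r_k}(x_k)$ (using that the $M^-$ operator scales favourably under the rescalings dictated by the Monge--Amp\`ere sections, with the $(2-\sigma)$-uniform bounds of Section~\ref{s3}). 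Lemma~\ref{l5.1} would then force $|\{\tilde u\le M\}\cap S_{r_k}(x_k)|>\eta|S_{r_k}(x_k)|$, that is $|A_{j+1}\cap S_{r_k}(x_k)|<(1-\eta)|S_{r_k}(x_k)|=\theta|S_{r_k}(x_k)|$, contradicting property (3) of Lemma~\ref{l2.1}. Hence $S_{\tau r_k}(x_k)\subset A_j$ for every $k$, and therefore $\bigcup_k S_{r_k}(x_k)\subset \bigcup_k S_{\tau r_k}(x_k)$, whose measure is controlled — using the engulfing property (Theorem~\ref{t2.2}) together with the doubling estimate Theorem~\ref{t2.4}(iv) to absorb the factor $\tau$ — by a universal multiple of $|A_j|$. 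Combining with property (4),
$$
|A_{j+1}|<c(\theta)\Bigl|\bigcup_k S_{r_k}(x_k)\Bigr|\le C\,c(\theta)\,|A_j|=:(1-\mu)\,|A_j|,
$$
which closes the induction once $\rho$ is fixed so small that the geometry stays inside $S_{2\tau}(z)$ where the hypothesis $M^-u\le\varepsilon_0$ is available, and $c(\theta)$ is small enough that $(1-\mu)\in(0,1)$.

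Finally I would convert the geometric decay into the stated power law: given $t>0$, pick $j$ with $M^{j}\le t<M^{j+1}$; then $|\{u>t\}\cap S_\rho|\le|A_j|\le(1-\mu)^j|S_r|\le C\,t^{-\varepsilon}|S_r|$ with $\varepsilon:=-\log(1-\mu)/\log M>0$, and for $t<1$ the estimate is trivial after enlarging $C$. The constants $\mu$, $M$, $\varepsilon$ ultimately trace back to $\lambda$, $\Lambda$, $n$ and $\sigma_0$ from Lemma~\ref{l5.1}, while $\rho$ and the overlap/doubling constants depend on the Monge--Amp\`ere data $g_-$, $g^+$; since by the uniformity in $(2-\sigma)$ built into Section~\ref{s3} none of these blow up as $\sigma\to2$, we may drop the $\sigma_0$-dependence and record the dependence as $\lambda,\Lambda,g_-,g^+,n$ as claimed. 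The main obstacle I anticipate is not the covering bookkeeping but the \emph{rescaling} step on each $S_{r_k}(x_k)$: one must verify that translating and dilating a generic Monge--Amp\`ere section (via its normalization map) transforms the operator $M^-$ into an operator of the same class with comparable ellipticity constants and a right-hand side that is not amplified, so that Lemma~\ref{l5.1} applies verbatim on every section of the family — this is where Theorem~\ref{t2.3} and Theorem~\ref{t2.4} do the real work, ensuring the normalized sections of all scales remain uniformly comparable to balls.
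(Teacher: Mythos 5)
Your overall strategy (point-to-measure lemma plus Calder\'on--Zygmund covering plus induction on super-level sets) is the right family of arguments, but the induction you set up has a gap at the localization step, and it is precisely this gap that forces the paper to follow a different, Caffarelli--Guti\'errez-style iteration. You define $A_j:=\{u>M^j\}\cap S_\rho$ for a \emph{fixed} $\rho$ and claim $|A_{j+1}|\le(1-\mu)|A_j|$. The contradiction argument on each covering section correctly yields $S_{\tau r_k}(x_k)\subset\{u>M^j\}$, hence $\bigcup_k S_{r_k}(x_k)\subset\{u>M^j\}$; but to close the induction via property (4) of Lemma~\ref{l2.1} you must compare $\bigl|\bigcup_k S_{r_k}(x_k)\bigr|$ with $|A_j|$, i.e.\ with the super-level set \emph{intersected with $S_\rho$}. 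The sections produced by Lemma~\ref{l2.1} are centered at points of $A_{j+1}$ but carry no containment information: unlike the dyadic predecessor cube in the Euclidean Calder\'on--Zygmund decomposition, $S_{r_k}(x_k)$ and its dilate may protrude outside $S_\rho$, and the protruding part of $\{u>M^j\}$ is not controlled by the inductive hypothesis. Engulfing and doubling do not repair this, since they compare a section with a dilate of itself, not with the fixed ambient section $S_\rho$; note also that the family of Lemma~\ref{l2.1}, unlike that of Lemma~\ref{l2.0}, comes with no bounded-overlap property, so summing $|S_{r_k}(x_k)\cap A_j|$ over $k$ does not yield $C|A_j|$ either.

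The paper's proof (following Theorem 4 of \cite{CG97}) resolves exactly this by measuring the super-level sets in a \emph{decreasing} family of concentric sections $S_{r_k}(z)$, $1=r_0>r_1>\cdots$, and proving $|\{u>QP^{k+2}\}\cap S_{k+1}|<c(\theta)\,|\{u>QP^{k+1}\}\cap S_k|$: at step $k$ the covering sections are kept inside the previous, slightly larger section $S_{r_k}(z)$, which is possible because the density condition (3) of Lemma~\ref{l2.1} forces their heights to be bounded by a geometrically decaying quantity (of order $(H/(QP^{i+1}))^{1/\varepsilon}$), so the total shrinkage is summable and $\rho=\lim r_k>0$ once $P$ and $Q$ are large. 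If you want to keep your fixed-$\rho$ formulation you must incorporate this shrinking-domain device, or otherwise control the leakage of the covering sections outside $S_\rho$. Separately, the rescaling of Lemma~\ref{l5.1} to an arbitrary section of the family via its normalization map --- which you correctly identify as the place where Theorems~\ref{t2.3} and~\ref{t2.4} act --- should be carried out rather than only announced; the remaining bookkeeping (choosing the density parameter against $\eta$, converting geometric decay into the $t^{-\varepsilon}$ bound) is fine.
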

\begin{proof}
	As in \cite[Theorem 4]{CG97}, it is enough to consider the case when section $S_r(z)$ is normalized and has unit parameter $r=1$. Then the result follows from the fact that, as in \cite[proof of Theorem 4]{CG97}, using Lemma \ref{l5.1} and Lemma \ref{l2.1}, for $Q$ and $P$ large enough, one can construct a decreasing family of sections $S_k:=S_{r_k}(z)$, $k\in\mathbb{N}\cup \{0\}$, with $1=r_0>r_1>\ldots$,
	such that
	$$
	\left| \left\{ u>QP^{k+2} \right\}\cap S_{k+1}\right| < c(\theta) \left| \left\{ u>QP^{k+1} \right\} \cap S_k \right|,\,\,k\in\mathbb{N},
	$$
	where $c(\theta)<1$ is as in Lemma \ref{l2.1}, 
	$$
	r_k=1-\sum_{i=1}^k\left(\gamma\left(\frac{H}{QP^{i+1}}\right)^{\frac{1}{\varepsilon}}\right)^{\frac{1}{p}},
	$$
	$p>1$, and $H$ is a structural constant. Passing to the limit as $k\rightarrow\infty$, we obtain
	$$
	\rho:=1-\sum_{i=1}^\infty\left(\gamma\left(\frac{H}{QP^{i+1}}\right)^{\frac{1}{\varepsilon}}\right)^{\frac{1}{p}}\in(0,1),
	$$	
	once $P$, $Q$ are large enough.
	
	Since $c(\theta)<1$, Lemma \ref{l2.1} then implies the result for the section $S_\rho(z)$. 
\end{proof}

\section{The Harnack inequality and consequences}\label{s6}
In this section, we prove the Harnack inequality for integro-differential equations with kernels deforming like sections of a strictly convex solution to a Monge-Amp\`ere equation and, as a consequence, we derive $C^\alpha$ and $C^{1,\alpha}$ estimates for solutions. The Harnack inequality remains uniform as $\sigma\rightarrow2$. We need the following auxiliary result.
\begin{theorem}\label{t6.1}
Let $\sigma_0>0$, $\sigma\geq\sigma_0$ and $C_0>0$. If $u\geq0$ in $\R^n$, $M^-u\leq C_0$, $M^+u\geq-C_0$ in $S_{2\tau}$, then there exists $C>0$, depending on $\sigma_0$ but not on $\sigma$, such that
$$
u(x)\leq C(u(0)+C_0)\,\textrm{ in }\,S_{\rho/2},
$$
where $\rho$ is as in Theorem \ref{t5.1}.
\end{theorem}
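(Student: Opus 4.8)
The plan is to carry over to the present non-local setting with Monge--Amp\`ere-deforming kernels the passage from an $L^\varepsilon$-type estimate to the full Harnack inequality, as in \cite{CS09,CC95}. The measure decay of Theorem \ref{t5.1} encodes the supersolution information $M^-u\le C_0$; the subsolution information $M^+u\ge-C_0$ will instead be used to produce a \emph{doubling} (growth) property; iterating the latter and confronting it with the decay yields a contradiction whenever $u$ is assumed to be too large in $S_{\rho/2}$.

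\emph{Normalization and decay.} I would first assume $u(0)+C_0>0$ (the degenerate case follows by replacing $C_0$ with $\delta>0$ and letting $\delta\to0$). Using that $M^\pm$ are positively homogeneous of degree one, replacing $u$ by a suitable small multiple of $u/(u(0)+C_0)$ reduces matters to proving a universal bound on $\sup_{S_{\rho/2}}u$ under the normalized hypotheses $u\ge0$ in $\R^n$, $u(0)\le\varepsilon_0$, $M^-u\le\varepsilon_0$ and $M^+u\ge-\varepsilon_0$ in $S_{2\tau}$, where $\varepsilon_0$ is the constant of Lemma \ref{l5.1}. Since the origin is the centre of $S_1$, one has $\inf_{S_1}u\le u(0)\le1$, so Theorem \ref{t5.1} applies with $z=0$, $r=1$ and gives universal $C_1,\varepsilon>0$ with
$$
\bigl|\{u>t\}\cap S_\rho\bigr|\le C_1\,t^{-\varepsilon}\,|S_1|,\qquad\forall\,t>0.
$$

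\emph{Doubling lemma and iteration.} The core step is to show: there exist universal $\nu\in(0,\varepsilon/n)$ and $\bar t\ge1$ such that if $x_0\in S_{7\rho/8}$ and $u(x_0)=t\ge\bar t$, then $u(x_1)\ge2t$ for some $x_1\in\overline{S_{t^{-\nu}}(x_0)}$. To prove it, I would suppose $u<2t$ on the small section $S:=\overline{S_{t^{-\nu}}(x_0)}\subset S_\rho$, normalize $S$ by an affine map $T$, and set $\tilde u:=u\circ T^{-1}$. The bounds \eqref{1.3} then become a small deformation of the fractional-Laplacian bounds on a set comparable to the unit ball (as in Section \ref{s2}), and the subsolution inequality transforms into $M^+\tilde u\ge-c_\sigma$ with $c_\sigma$ comparable to $(\diam S)^\sigma\varepsilon_0$, hence arbitrarily small for $t$ large, uniformly in $\sigma\in(\sigma_0,2)$. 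Using a barrier of the type built in Section \ref{s4} (rescaled to $S$) to confine the contact set to the interior, together with Lemma \ref{l2.2} and the ABP-type computation behind Lemma \ref{l3.1}, one deduces $\tilde u\ge t/2$ on a set of measure at least a universal $\mu>0$; transferring back gives $|\{u>t/2\}\cap\kappa S|\ge\mu|S|$ for a fixed dilation $\kappa S\subset S_\rho$. Since $|S|$ is comparable to $t^{-\nu n}$ by Theorem \ref{t2.1} and $\nu n<\varepsilon$, comparing with the displayed decay estimate at level $t/2$ forces $\mu\,c\,t^{-\nu n}\le C_1(t/2)^{-\varepsilon}|S_1|$, impossible for large $t$ -- a contradiction proving the lemma. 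Then, if $\sup_{S_{\rho/2}}u>\bar t$ could be arbitrarily large, I would pick $z_0\in S_{\rho/2}\subset S_{7\rho/8}$ with $t_0:=u(z_0)$ huge and iterate: $z_{k+1}\in\overline{S_{t_k^{-\nu}}(z_k)}$, $t_{k+1}:=u(z_{k+1})\ge2t_k\ge2^{k+1}t_0$. By Theorem \ref{t2.3} the Euclidean diameters $\diam S_{t_k^{-\nu}}(z_k)$ are summable with total sum $\to0$ as $t_0\to\infty$, so for $t_0$ large all the $z_k$ remain in $S_{7\rho/8}$ (keeping every section involved inside $S_{2\tau}$), and $z_k\to\bar z\in\overline{S_\rho}$. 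Since $u(z_k)\to\infty$ while $u(\bar z)\in\R$, this contradicts the upper semicontinuity of $u$ at $\bar z$. Hence $\sup_{S_{\rho/2}}u$ is bounded by a universal constant, and undoing the normalization gives $u(x)\le C(u(0)+C_0)$ in $S_{\rho/2}$.

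\emph{Main obstacle.} The delicate point is the doubling lemma: one must transfer the subsolution inequality through the normalization of a possibly very eccentric Monge--Amp\`ere section while keeping every constant independent of $\sigma$ as $\sigma\to2$, and convert the pointwise largeness $u(x_0)=t$ into the measure-theoretic largeness $|\{u>t/2\}\cap\kappa S|\ge\mu|S|$ by a barrier fashioned from the auxiliary functions of Section \ref{s4} and the ABP estimate of Section \ref{s3}. This is exactly where the engulfing property and the comparability of normalized sections (Theorems \ref{t2.1}--\ref{t2.4}) are indispensable; the remaining reductions are routine adaptations of \cite{CS09}.
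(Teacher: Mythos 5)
Your overall architecture (pointwise-to-measure decay from Theorem \ref{t5.1} plus a point-doubling lemma iterated to a blow-up) is a legitimate alternative to the paper's route, which instead works with the barrier $v_\theta(x)=\theta\,\dist(x,\partial S_1)^{-\kappa}$, a minimal contact point $x_0$, and a contradiction between the measure of $\{u>u(x_0)/2\}$ and of $\{u<u(x_0)/2\}$ in $S_r(x_0)$. But your proof of the doubling lemma has a genuine gap precisely where the paper's proof does its hardest work. To show that $u<2t$ on $S=\overline{S_{t^{-\nu}}(x_0)}$ forces $|\{u>t/2\}\cap\kappa S|\geq\mu|S|$, you must apply Lemma \ref{l5.1} (or the ABP machinery of Section \ref{s3}) to $w:=2t-u$. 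Every one of those tools requires the function to be nonnegative (resp.\ nonpositive for the envelope argument) in \emph{all} of $\R^n$, whereas $w\geq 0$ only on the small section $S$: outside $S_{2\tau}$ the hypotheses give no upper bound on $u$ at all, so $w$ can be arbitrarily negative far away and the concave-envelope argument of Lemma \ref{l3.1} (which needs $\delta\leq0$ at contact points) collapses. One is forced to truncate to $w^+$ and estimate $M^-w^+-M^-w$, which is a nonlocal tail integral of $u$ against the kernel. Controlling that tail is exactly what the subsolution hypothesis $M^+u\geq-C_0$ is for: in the paper it is used at an auxiliary interior touching point $x_1$ to get $(2-\sigma)\int(u(x_1+y)-2)^+v_x^{-\frac{n+\sigma}{2}}\,dy\leq C$, leading to the bound $M^-a\leq 1+C(hr)^{-\frac{n+\sigma}{2}}$ in \eqref{6.2}--\eqref{6.6}. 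Your proposal uses $M^+u\geq-C_0$ only to say the rescaled right-hand side is small, never to tame the tails, so the doubling lemma is not actually proved. Moreover the truncation error blows up like a negative power of the section size $t^{-\nu}$, so you would also need to verify that the exponent $\nu$ can be chosen to satisfy simultaneously $\nu n<\varepsilon$ and the smallness of the rescaled truncation error — the paper's choice $\kappa=n/(2\varepsilon)$ is tuned exactly for this balance, and your sketch does not check it.

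A secondary, fixable issue: the doubling factor cannot simply be taken equal to $2$. Applying the measure estimate of Lemma \ref{l5.1} to $(Kt-u)/((K-1)t)$ yields $|\{u\geq Kt-M(K-1)t\}\cap\cdot|\geq\eta|\cdot|$, and for the conclusion $u\geq t/2$ on that set you need $K\leq 1+\frac{1}{2(M-1)}$, where $M>1$ is the (possibly large) constant of Lemma \ref{l5.1}. So the growth factor per step is $1+c$ with $c$ small and universal, not $2$; the geometric iteration still works, but the constant must be stated correctly for the radii $t_k^{-\nu}$ to be summable and the argument to close.
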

\begin{proof}
Without loss of generality, one can assume that $u(0)\leq1$ and $C_0=1$ 
(otherwise divide by $u(0)+C_0$). Take $\varepsilon>0$ as in Theorem \ref{t5.1} and set $\kappa=\frac{n}{2\varepsilon}$ and 
$$
v_\theta(x):=\theta(dist(x,\partial S_1))^{-\kappa},\,\,\forall x\in S_1.
$$
Let now $\theta_0>0$ be the minimum value of $\theta$ for which there holds $u\leq v_\theta$ in $S_1$. Note that there must be a point $x_0\in S_1$ such that $u(x_0)=v_{\theta_0}(x_0)$ (otherwise one would be able to take $\theta_0$ smaller). As in \cite{CS09}, the aim is to show that $\theta_0$ can not be too large, i.e., that there exists $C>0$ such that $\theta_0<C$. 

For that purpose, we estimate the portion of the section $S_r(x_0)$ covered by $\{u<u(x_0)/2\}$ and by $\{u>u(x_0)/2\}$, where $r=d/2$, $d$ being the distance of the point $x_0$ to $\partial S_1$. Theorem \ref{t5.1} provides
$$
\left|\left\{u>\frac{u(x_0)}{2}\right\}\cap S_\rho\right|\leq C\left|\frac{2}{u(x_0)}\right|^\varepsilon=C2^\varepsilon\theta^{-\varepsilon}d^{\kappa\varepsilon}\leq C_1\theta_0^{-\varepsilon}r^n.
$$
On the other hand, $|S_r(x_0)|\geq C_2r^n$, so 
\begin{equation}\label{6.1}
\left|\left\{u>\frac{u(x_0)}{2}\right\}\cap S_r(x_0)\right|\leq\frac{C_1}{C_2}\theta_0^{-\varepsilon}|S_r(x_0)|, 
\end{equation}
which means that if $\theta_0$ is large, then the set $\{u>u(x_0)/2\}$ can cover only a small portion of $S_r(x_0)$.

Our next task is to show that if $\theta_0$ is large, then the measure of the portion of $S_r(x_0)$ covered by $\{u<u(x_0)/2\}$ does not exceed $(1-\delta)|S_r(x_0)|$, for a positive constant $\delta$ independent of $\theta_0$. This will lead to a contradiction, hence $\theta_0$ must be bounded, and the result will follow. 

Let $h>0$ be so small that
$$
dist(x,\partial S_1)\geq d-\frac{hd}{2},\,\,\forall x\in S_{hr}(x_0),
$$
and so, for every $x\in S_{hr}(x_0)$, one has
$$
u(x)\leq v_{\theta_0}(x)\leq\theta_0\left(d-\frac{hd}{2}\right)^{-\kappa}\leq u(x_0)\left(1-\frac{h}{2}\right)^{-\kappa}.
$$
Therefore, 
$$
\omega(x):=\left(1-\frac{h}{2}\right)^{-\kappa}u(x_0)-u(x)\geq0\quad \textrm{ in }\,S_{hr}(x_0),
$$
and $M^-\omega\leq1$. The latter follows from the fact that $M^+u\geq-1$. We would like to apply Theorem \ref{t5.1} (rescaled)  to $\omega$, but we can not do so because $\omega$ is not non-negative in the whole space, but just in $S_{hr}$. This leads us to consider the function $a:=\omega^+$ instead, and estimate the change in the right hand side due to the truncation error. We need to find an estimate for $M^-a$ from above.
For $x\in\R^n$, we have
\begin{eqnarray}\label{6.2}
\frac{M^-a(x)-M^-\omega(x)}{2-\sigma}&=&\lambda\int_{\R^n}\frac{\delta^+(a,x,y)-\delta^+(\omega,x,y)}{v_x^{\frac{n+\sigma}{2}}(y)}\,dy\nonumber\\
&+&\Lambda\int_{\R^n}\frac{\delta^-(\omega,x,y)-\delta^-(a,x,y)}{v_x^{\frac{n+\sigma}{2}}(y)}\,dy\nonumber\\
&:=&I_1+I_2.
\end{eqnarray}
Note that if $\delta_g:=\delta(g,x,y)$, then
$$
\delta_a^+=\delta_\omega+\omega^-(x-y)+\omega^-(x+y)
$$
due to the elementary equality
$$
\omega^+(x+y)=\omega(x+y)+\omega^-(x+y).
$$
Also,
$$
\delta_a^+\geq\delta_\omega^+\,\,\textrm{ and }\,\,\delta_\omega=\delta_\omega^+-\delta_\omega^-,
$$
so we estimate
\begin{eqnarray}\label{6.3}
I_1&=&-\lambda\int_{\{\delta_a^+>\delta_\omega^+\}}\frac{\delta_\omega^-}{v_x^{\frac{n+\sigma}{2}}(y)}\,dy\nonumber\\
&+&\lambda\int_{\{\delta_a^+>\delta_\omega^+\}}\frac{\omega^-(x+y)+\omega^-(x-y)}{v_x^{\frac{n+\sigma}{2}}(y)}\,dy\nonumber\\
&\leq&\Lambda\int_{\{\delta_a^+>0\}}\frac{\omega^-(x+y)+\omega^-(x-y)}{v_x^{\frac{n+\sigma}{2}}(y)}\,dy.
\end{eqnarray}
Similarly,
\begin{eqnarray}\label{6.4}
I_2&=&\Lambda\int_{\{\delta_\omega^->0\}\cap\{\delta_a^-\neq\delta_\omega^-\}}\frac{\delta_\omega^--\delta_a^-}{v_x^{\frac{n+\sigma}{2}}(y)}\,dy\nonumber\\
&+&\Lambda\int_{\{\delta_\omega^-=0\}\cap\{\delta_a^-\neq\delta_\omega^-\}}\frac{\omega^-(x+y)+\omega^-(x-y)}{v_x^{\frac{n+\sigma}{2}}(y)}\,dy\nonumber\\
&\leq&\Lambda\int_{\{\delta_\omega^->0\}\cap\{\delta_a^-\neq\delta_\omega^-\}}\frac{-\delta_\omega-\delta_\omega^-}{v_x^{\frac{n+\sigma}{2}}(y)}\,dy.
\end{eqnarray}
Observe that
\begin{eqnarray}\label{6.5}
-\delta_\omega^--\delta_a^-&=&2\omega(x)-(\omega(x+y)+\omega(x-y))-\delta_a^-\nonumber\\
&=&2\omega(x)-[(\omega^+(x+y)+\omega^+(x-y))\nonumber\\
&\,\,&-(\omega^-(x+y)+\omega^-(x-y))]\nonumber\\
&=&-\delta_a-\delta_a^-+\omega^-(x+y)+\omega^-(x-y)\nonumber\\
&=&-\delta_a^++\omega^-(x+y)+\omega^-(x-y).
\end{eqnarray}
Using \eqref{6.4} and \eqref{6.5} we then get
\begin{eqnarray}\label{6.6}
I_2&\leq&-\Lambda\int_{\{\delta_\omega^->0\}\cap\{\delta_a^-\neq\delta_\omega^-\}}\frac{\delta_a^+}{v_x^{\frac{n+\sigma}{2}}(y)}\,dy\nonumber\\
&+&\Lambda\int_{\{\delta_\omega^->0\}\cap\{\delta_a^-\neq\delta_\omega^-\}}\frac{\omega^-(x+y)+\omega^-(x-y)}{v_x^{\frac{n+\sigma}{2}}(y)}\,dy\nonumber\\
&\leq&\Lambda\int_{\{\delta_a^-\geq0\}}\frac{\omega^-(x+y)+\omega^-(x-y)}{v_x^{\frac{n+\sigma}{2}}(y)}\,dy.
\end{eqnarray}
Therefore, from \eqref{6.2}, \eqref{6.3} and \eqref{6.6}, one gets
\begin{eqnarray*}
\frac{M^-a(x)-M^-\omega(x)}{2-\sigma}&\leq&\Lambda\int_{\R^n}\frac{\omega^-(x+y)+\omega^-(x-y)}{v_x^{\frac{n+\sigma}{2}}(y)}\,dy\nonumber\\
&=&-2\Lambda\int_{\{\omega(x+y)<0\}}\frac{\omega(x+y)}{v_x^{\frac{n+\sigma}{2}}(y)}\,dy.
\end{eqnarray*}
Moreover, by the definition of $\omega$, for $x\in S_{hr/2}(x_0)$ we have
\begin{eqnarray*}
&&\frac{M^-a(x)-M^-\omega(x)}{2-\sigma}\leq2\Lambda\int_{\omega(x+y)<0}\frac{-\omega(x+y)}{v_x^{\frac{n+\sigma}{2}}(y)}\,dy\nonumber\\
&\leq&2\Lambda\int_{\R^n\setminus S_{hr}(x_0-x)}\frac{\left(u(x+y)-\left(1-\frac{h}{2}\right)^{-\kappa}u(x_0)\right)^+}{v_x^{\frac{n+\sigma}{2}}(y)}\,dy.
\end{eqnarray*}
Observe that if $t>0$ is the largest value for which $u(x)\geq t(1-|4x|^2)$, then there must be a point $x_1$ in a smaller section $S_r$ such that $u(x_1)=1-|4x_1|^2$. Since $u(0)\leq1$, then $t\leq1$. Thus,
$$
(2-\sigma)\int_{\R^n}\frac{\delta^-(u,x_1,y)}{v_x^{\frac{n+\sigma}{2}}(y)}\,dy\leq(2-\sigma)\int_{\R^n}\frac{\delta^-(1-|4x_1|^2,x_1,y)}{v_x^{\frac{n+\sigma}{2}}(y)}\,dy\leq C,
$$
where the constant $C>0$ does not depend on $\sigma$. On the other hand, since $M^-u(x_1)\leq1$, we find
$$
(2-\sigma)\int_{\R^n}\frac{\delta^+(u,x_1,y)}{v_x^{\frac{n+\sigma}{2}}(y)}\,dy\leq C.
$$
In particular, since $u(x_1)\leq1$ and $u(x_1-y)\geq0$, we have
$$
(2-\sigma)\int_{\R^n}\frac{(u(x_1+y)-2)^+}{v_x^{\frac{n+\sigma}{2}}(y)}\,dy\leq C.
$$
By assuming $\theta_0>0$ is large enough, we can suppose that $u(x_0)>2$.
Writing
$$
u(x+y)-\left(1-\frac{h}{2}\right)^{-\kappa}u(x_0)=u(x+x_1+y-x_1)-\left(1-\frac{h}{2}\right)^{-\kappa}u(x_0),
$$ 
we estimate
\begin{align*}
&2\Lambda(2-\sigma)\int_{\R^n\setminus S_{hr}(x_0-x)}\frac{\left(u(x+y)-\left(1-\frac{h}{2}\right)^{-\kappa}u(x_0)\right)^+}{v_x^{\frac{n+\sigma}{2}}(y)}\,dy\\
&\leq2\Lambda(2-\sigma)\int_{\R^n\setminus S_{hr/2}(x_0-x)}\frac{\left(u(x_1+y+x-x_1)-\left(1-\frac{h}{2}\right)^{-\kappa}u(x_0)\right)^+}{v_x^{\frac{n+\sigma}{2}}(y+x-x_1)}\,\\
&\quad \cdot\frac{v_x^{\frac{n+\sigma}{2}}(y+x-x_1)}{v_x^{\frac{n+\sigma}{2}}(y)}\,dy\\
&\leq C(hr)^{-\frac{n+\sigma}{2}}.
\end{align*}
Hence, since
$$
M^-a=M^-\omega+(M^-a-M^-\omega),
$$
we finally conclude
$$
M^-a\leq 1+C(hr)^{-\frac{n+\sigma}{2}}\,\,\textrm{ in }\,\,S_{hr/2}(x_0),
$$
where the constant $C>0$ does not depend on $\sigma$. This allows to apply Theorem \ref{t5.1} to $a$ in $S_{hr/2}(x_0)$. From Theorem \ref{t5.1} and the fact that
$$
a(x_0)=\left(\left(1-\frac{h}{2}\right)^{-\kappa}-1\right)u(x_0),
$$
one has
\begin{align}\label{6.7}
&\left|\left\{u<\frac{u(x_0)}{2}\right\}\cap S_{hr/4}(x_0)\right|\nonumber\\
&=\left|\left\{a(x)>u(x_0)\left(\left(1-\frac{h}{2}\right)^{-\kappa}-\frac{1}{2}\right)\right\}\cap S_{hr/4}(x_0)\right|\nonumber\\
&\leq C \left| S_{hr/4}(x_0) \right |\left[ \left( \left(1-\frac{h}{2}\right)^{-\kappa}-1\right)u(x_0) \right. \nonumber\\
& \quad \left. + \left( 1+C(hr)^{-\frac{n+\sigma}{2}} \right) (rh)^\sigma \right]^\varepsilon\left(u(x_0)\left(\left(1-\frac{h}{2}\right)^{-\kappa}-\frac{1}{2}\right)\right)^{-\varepsilon}\nonumber\\
&\leq C \left| S_{hr/4}(x_0) \right| \left[\left(\left(1-\frac{h}{2}\right)^{-\kappa}-1\right)u(x_0)+C_1(hr)^{-c(n)}\right]^\varepsilon\nonumber\\
&\quad \cdot\left(u(x_0)\left(\left(1-\frac{h}{2}\right)^{-\kappa}-\frac{1}{2}\right)\right)^{-\varepsilon}\nonumber\\
&\leq C \left| S_{hr/4}(x_0) \right| \left[\left(\left(1-\frac{h}{2}\right)^{-\kappa}-1\right)^\varepsilon+h^{-c(n)\varepsilon}t^{-\varepsilon}\right],
\end{align}
where $c(n)>0$ does not depend on $\sigma$. In order to get the last estimate in \eqref{6.7}, we used the inequalities
\begin{eqnarray*}
&&\left[\left(\left(1-\frac{h}{2}\right)^{-\kappa}-\frac{1}{2}\right)u(x_0)+C_1(hr)^{-c(n)}\right]^\varepsilon\nonumber\\
&\leq&\left(\left(1-\frac{h}{2}\right)^{-\kappa}-\frac{1}{2}\right)^\varepsilon u^\varepsilon(x_0)+C_1(hr)^{-c(n)\varepsilon}
\end{eqnarray*}
and
$$
\left(1-\frac{h}{2}\right)^{-\kappa}-\frac{1}{2}\geq\left(1-\frac{h}{2}\right)^{-\frac{n}{\varepsilon}}-\frac{1}{2}\geq\frac{1}{2},
$$
for $h>0$ sufficiently small, and also
\begin{eqnarray*}
&&C_2h^{-c(n)\varepsilon}u^{-\varepsilon}(x_0)\left(\left(1-\frac{h}{2}\right)^{-\kappa}-\frac{1}{2}\right)^{-\varepsilon}\nonumber\\
&\leq&C_3h^{-c(n)\varepsilon}r^{-c(n)\varepsilon}u^{-\varepsilon}(x_0)\leq C_4h^{-c(n)\varepsilon}\theta_0^{-\varepsilon}d^{n(1-c\varepsilon)}\leq C_5h^{-c(n)\varepsilon}\theta_0^{-\varepsilon}.
\end{eqnarray*}
We choose $h>0$ small enough to guarantee
\begin{eqnarray}\label{6.8}
&&C|S_{hr/4}(x_0)|\left(\left(1-\frac{h}{2}\right)^{-\kappa}-1\right)^\varepsilon\nonumber\\
&\leq& C|S_{hr/4}(x_0)|\left(\left(1-\frac{h}{2}\right)^{-\frac{2n}{\varepsilon}}-1\right)^\varepsilon\nonumber\\
&\leq&\frac{1}{4}|S_{hr/4}(x_0)|.
\end{eqnarray}
Observe that we can choose such $h$ independently of $\theta_0$. Then, for this fixed $h$, we take $\theta_0>0$ large enough to guarantee
\begin{equation}\label{6.9}
C|S_{hr/4}(x_0)|h^{-c(n)\varepsilon}\theta_0^{-\varepsilon}\leq\frac{1}{4}|S_{hr/4}(x_0)|.
\end{equation}
Combining \eqref{6.7}-\eqref{6.9}, we obtain
$$
\left|\left\{u<\frac{u(x_0)}{2}\right\}\cap S_{hr/4}(x_0)\right|\leq\frac{1}{4}|S_{hr/4}(x_0)|,
$$
which implies, for $\theta_0>0$ large,
$$
\left|\left\{u>\frac{u(x_0)}{2}\right\}\cap S_{hr/4}(x_0)\right|\geq c |S_r(x_0)|,
$$
which contradicts \eqref{6.1}.
\end{proof}

As a consequence of the Harnack inequality, we obtain the H\"{o}lder regularity of solutions.
\begin{theorem}\label{t6.2}
Let $\sigma_0>0$ and $\sigma\in(\sigma_0,2)$. If $u$ is a bounded function in $\R^n$ such that
$$
M^-u\leq C_0\,\,\textrm{ and }\,\,M^+u\geq-C_0\,\,\textrm{ in }\,\,S_{2\tau}(x_0),
$$
then there exists a positive constant $\alpha\in(0,1)$, depending only on $\lambda$, $\Lambda$, $\sigma_0$ and dimension, such that $u\in C^\alpha(S_{\rho/2})$ and
$$
\|u\|_{C^\alpha(S_{\rho/2}(x_0))}\leq C \left( \sup_{\R^n}|u|+C_0 \right),
$$
for a constant $C>0$, depending only on the norm of the normalization map that normalizes the section $S_{\gamma\rho/2}(x_0)$, $\lambda$, $\Lambda$, $\sigma_0$, $C_0$ and dimension. Here the constant $\tau>1$ is as in Proposition \ref{p3.1} and $\rho\in(0,1)$ as in Theorem \ref{t5.1}, and $\gamma>1$ is the engulfing constant.
\end{theorem}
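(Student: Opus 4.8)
The plan is to deduce the H\"older estimate from the Harnack inequality of Theorem \ref{t6.1} by the classical oscillation-decay iteration of Krylov--Safonov type, carried out in the Monge--Amp\`ere geometry after transporting to normalized sections.

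First I would set up and normalize. Take $x_0=0$ and let $T$ be the affine map that normalizes $S_{\gamma\rho/2}(0)$, so $B_{\alpha_n}\subset T(S_{\gamma\rho/2})\subset B_1$. By Theorem \ref{t2.3}, every section $S_r$ with $r\leq\gamma\rho/2$ meeting $S_{\gamma\rho/2}$ is, after applying $T$, trapped between Euclidean balls of radii comparable to $r/\rho$ and $(r/\rho)^\varepsilon$; since a solution of the Monge--Amp\`ere equation with bounded right-hand side has quadratic growth on a normalized section, in these coordinates the kernels bounding $M^\pm$ become comparable (with constants depending on $\|T\|$) to those of fractional-Laplacian type, and the inequalities $M^-u\leq C_0$, $M^+u\geq -C_0$ persist with $C_0$ replaced by a multiple of $C_0$ depending on $\|T\|$. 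Dividing $u$ by $\sup_{\R^n}|u|+C_0$, we may assume $\sup_{\R^n}|u|\leq\tfrac12$ and $C_0\leq1$. Using the engulfing property (Theorem \ref{t2.2}) to keep the auxiliary sections nested inside $S_{2\tau}(0)$, the desired $C^\alpha$ bound reduces to an oscillation-decay estimate along a geometric sequence of sections around a generic point of the normalized image of $S_{\rho/2}$.

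Second, the core step is the oscillation decay. Fix a ratio $\vartheta\in(0,1)$ to be chosen and consider the nested sections $S^{(k)}$ of heights decaying like $\vartheta^k$, with $m_k:=\inf_{S^{(k)}}u$, $M_k:=\sup_{S^{(k)}}u$; I claim $M_k-m_k\leq\mu^k$ for a universal $\mu\in(0,1)$, by induction. Given the bound at level $k$, one of $\tfrac{2}{M_k-m_k}(u-m_k)$ or $\tfrac{2}{M_k-m_k}(M_k-u)$ is nonnegative on $S^{(k)}$ and at least $1$ at the center; call it $v$. It is not globally nonnegative, so I replace it by $w:=v^+$ and estimate the truncation error: since $|u|\leq\tfrac12$, on $\R^n\setminus S^{(k)}$ one has $|v|\leq C\mu^{-k}$, whose contribution to $M^-w$ through the kernel tails is, by \eqref{1.3} and Theorem \ref{t2.1}, at most $C\mu^{-k}(2-\sigma)\int_{\R^n\setminus S^{(k)}}v_0(y)^{-(n+\sigma)/2}\,dy\leq C\mu^{-k}\vartheta^{-k\sigma}$, which stays bounded uniformly in $k$ and as $\sigma\to2$ provided $\vartheta$ is taken close enough to $1$ relative to $\mu$ so that the rescaling gain absorbs $\mu^{-1}\vartheta^{-\sigma}$. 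Hence $M^-w\leq C_1$ in $S^{(k)}$ and likewise $M^+w\geq-C_1$, with $C_1$ universal. Applying Theorem \ref{t6.1}, rescaled to the normalized copy of $S^{(k)}$ (which contains a fixed ball), gives $\sup_{S^{(k+1)}}w\leq C(w(0)+C_1)$; arguing as in \cite{CS09}, this upgrades to $M_{k+1}-m_{k+1}\leq(1-\delta)(M_k-m_k)+C_1\vartheta^{k\sigma}$ for a universal $\delta\in(0,1)$, which closes the induction with $\mu\in(1-\delta,1)$.

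Third, I would iterate and transfer back. Summing the recursion yields $\osc_{S_r(0)}u\leq C r^{\beta}\bigl(\sup_{\R^n}|u|+C_0\bigr)$ for a universal $\beta>0$ and all $r\leq\rho/2$; composing with $T^{-1}$ and using the two-sided ball inclusion of Theorem \ref{t2.3} converts this into $\osc_{B_\varrho(x_0)}u\leq C\varrho^{\alpha}\bigl(\sup_{\R^n}|u|+C_0\bigr)$ for small $\varrho$, with $\alpha\in(0,1)$ depending on $\beta$ and the exponent $\varepsilon$ of Theorem \ref{t2.3}, and $C$ depending in addition on $\|T\|$. Together with the sup bound on $u$ this gives $u\in C^\alpha(S_{\rho/2}(x_0))$ with the stated norm estimate.

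The main obstacle is the control of the nonlocal tails under the iterated rescaling: at scale $k$ the affine rescaling magnifies the global bound on $u$ by a factor $\sim\mu^{-k}$, and one must show the induced perturbation of the right-hand side of the extremal inequalities stays bounded uniformly in $k$ and as $\sigma\to2$. This forces the careful calibration of $\vartheta$ against $\mu$ and relies on the kernel bounds \eqref{1.3} together with the volume growth and deformation estimates of Theorems \ref{t2.1} and \ref{t2.4}, as well as the uniform-in-$\sigma$ behaviour of $(2-\sigma)\int_{\R^n\setminus S_r}v_x(y)^{-(n+\sigma)/2}\,dy$. A secondary technical point, which accounts for the dependence of the constant on $\|T\|$, is that $M^\pm$ do not transform cleanly under the affine normalization because $v_x$ is not a quadratic; this is handled by invoking the quadratic growth of $\phi$ on normalized sections.
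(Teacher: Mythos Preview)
Your overall route---Harnack inequality $\Rightarrow$ oscillation decay on a nested family of sections $\Rightarrow$ Euclidean H\"older modulus via the normalization map of Theorem~\ref{t2.3}---is the same as the paper's. The paper is simply terser: it quotes \cite[Lemma~8.23]{GT83} for the oscillation decay $\osc_{S_r(x_0)}u\le C(r/\rho)^\alpha\max_{S_\rho(x_0)}u$ and then, for $x,y\in S_{\rho/2}(x_0)$, uses engulfing together with the section-radius versus distance comparison of \cite[Section~4]{CG97} to get $r_0\le C\|T\|^{1/\varepsilon}|x-y|^{1/\varepsilon}$ for the smallest section $S_{r_0}(x)$ containing $y$, yielding the final exponent $\alpha/\varepsilon$. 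It does not spell out the nonlocal tails in the iteration.

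There is, however, a genuine gap in the tail estimate of your core step. You bound $v^-$ on $\R^n\setminus S^{(k)}$ using only the global information $|u|\le\tfrac12$, obtaining $|v|\le C\mu^{-k}$ and hence a truncation error of order $\mu^{-k}\vartheta^{-k\sigma}$, which you then claim stays bounded in $k$ if $\vartheta$ is taken close to $1$. It does not: since $\mu,\vartheta\in(0,1)$ one has $\mu^{-1}\vartheta^{-\sigma}>1$ for every admissible choice, so the product blows up exponentially; even after absorbing the rescaling gain $\vartheta^{k\sigma}$ on the right-hand side you are left with a factor $\mu^{-k}$ and the recursion cannot close. (A related issue: with only $M_k-m_k\le\mu^k$ the bound $|v|\le C\mu^{-k}$ is itself unjustified, since dividing by a possibly much smaller $M_k-m_k$ gives something larger.) The remedy---which is exactly the mechanism in \cite{CS09} you invoke---is to feed the inductive information back into the tail: construct $m_j\le u\le M_j$ on $S^{(j)}$ with $M_j-m_j=\mu^j$ (equality, enforced by adjusting one endpoint at each step), so that on the $j$-th rescaled annulus $v^-\le C(\mu^{j-k}-1)$; the truncation error then becomes the geometric series $\sum_{i\ge1}(\vartheta^\sigma/\mu)^i$, finite and uniformly bounded as $\sigma\to2$ once $\mu>\vartheta^{\sigma_0}$. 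Note that $\vartheta$ is fixed by the Harnack geometry and one chooses $\mu$ close to $1$, not the other way around.
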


\begin{proof}
	From the Harnack inequality, as in \cite[Lemma 8.23]{GT83}, we conclude that there exist $C>0$ and $\alpha\in(0,1)$ such that
	$$
	\osc_{S_r({x_0})}u\le C\left(\frac{r}{\rho}\right)^{\alpha}\max_{S_\rho({x_0})}u,\,\,\,\,r<\rho.
	$$
	Let $x$, $y\in S_{\rho/2}(x_0)$ and $S_{r_0}(x)$ be the smallest section containing $y$. By the engulfing property, Theorem \ref{t2.2}, $S_{\rho/2}(x_0)\subset S_{\gamma\rho/2}(x)$. Thus, $y\in S_{\gamma\rho/2}(x)$, hence $r_0\le\gamma\rho/2$, since $S_{r_0}(x)$ is the smallest section containing $y$. If $T$ is the affine transformation that normalizes the section $S_{\gamma\rho/2}(x_0)$, then arguing as in \cite[Section 4]{CG97}, we have
	$$
	r_0\le 2\gamma\rho\left(\frac{2\|T\|}{K_2}\right)^{1/\varepsilon}|x-y|^{1/\varepsilon},
	$$
	where $K_2>0$ and $\varepsilon>0$ are the constants appearing in Theorem \ref{t2.3}. Thus,
	$$
	|u(x)-u(y)|\le\osc_{S_{r_0}(x)}u\le\left(\frac{r_0}{\rho}\right)^{\alpha}\max_{S_\rho({x})}u\le C|x-y|^{\alpha/\varepsilon}\max_{S_\rho({x})}u.
	$$
\end{proof}

In order to prove the interior $C^{1,\alpha}$ regularity of solutions one needs to have an extra assumption on the kernels. The idea is to use Theorem \ref{t6.2} for incremental quotients of the solution, but since we do not have a uniform bound in $L^\infty$ for these incremental quotients outside of the domain, we assume a modulus of continuity in measure for the kernel, to make sure that faraway oscillations tend to cancel out. More precisely, for a given $\varrho>0$, we define the class $\mathcal{L}_1$ of the operators $L$ with kernels $K$ satisfying not only \eqref{1.3}, but additionally
\begin{equation}\label{6.10}
\int_{\R^n\setminus S_\varrho}\frac{K_x^{\alpha\beta}(y)-K_x^{\alpha\beta}(y-h)}{|h|}\,dy\leq \Upsilon,\,\,\,\,\textrm{for}\,\,\,\,|h|<\frac{\varrho}{2}.
\end{equation}
The proof of the next theorem is essentially the same as the one of Theorem 13.1 of \cite{CS09}, hence we will omit it.

\begin{theorem}\label{t6.3}
Let $\sigma_0>0$ and $\sigma\in(\sigma_0,2)$. Let also the kernels $K_x^{\alpha\beta}$ satisfy $\eqref{1.3}$ and \eqref{6.10}. If $u$ is a bounded function such that $Iu=f$ in $S_{2\tau}$, then there is a constant $\gamma\in(0,1)$, depending only on $\lambda$, $\Lambda$, $\sigma_0$ and dimension, such that $u\in C^{1,\gamma}(S_{\rho/2})$ and
$$
\|u\|_{C^{1,\gamma}(S_{\rho/2})}\leq C\sup_{\R^n}|u|,
$$
for a constant $C>0$, depending only on the norm of the normalization map of the section $S_{\gamma\rho/2}(x_0)$, $\lambda$, $\Lambda$, $\sigma_0$, $\Upsilon$ and dimension. Here the constant $\tau>1$ is as in Proposition \ref{p3.1} and $\rho\in(0,1)$ as in Theorem \ref{t5.1}.
\end{theorem}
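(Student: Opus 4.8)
The plan is to follow the proof of Theorem~13.1 in \cite{CS09}, bootstrapping regularity through incremental quotients, but first transporting everything to a normalized section where the kernels become a controlled perturbation of the fractional-Laplacian kernel. First I would compose with the normalization map $T$ of the section $S_{\gamma\rho/2}(x_0)$: by Theorems~\ref{t2.1} and~\ref{t2.3} the relevant sections become comparable to Euclidean balls, every constant below acquires a dependence on $\|T\|$ in addition to the structural data (exactly as in Theorem~\ref{t6.2}), and it suffices to argue in this picture. I would also normalize $\sup_{\R^n}|u|\le1$ and reduce to $M^-u\le0$, $M^+u\ge0$ on the section of interest; the right-hand side $f$, if retained, is of lower order and merely adds $\|f\|$ to the final estimate. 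Theorem~\ref{t6.2} then already gives $u\in C^\alpha$ on a slightly smaller section with $[u]_{C^\alpha}\le C$, where $\alpha\in(0,1)$ is the exponent from Theorem~\ref{t6.2}.

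Next I would run the incremental-quotient scheme. For a small $h\in\R^n$, set $v_h(x):=u(x+h)-u(x)$ and $\psi_h:=|h|^{-\alpha}v_h$. Using the ellipticity of $I$ (Definition~\ref{d2.3}) and the homogeneity of $M^\pm$, the function $\psi_h$ satisfies $M^-\psi_h\le E_h$ and $M^+\psi_h\ge-E_h$ on a slightly smaller section, where $E_h$ gathers three contributions: the defect $I(u(\cdot+h))(x)-(Iu)(x+h)$ caused by the spatial dependence of the kernels; the (lower-order) contribution of $f$; and the far-field error produced by truncating $\psi_h$ outside a slightly larger section, a truncation that is unavoidable because globally one only has $|\psi_h|\le2|h|^{-\alpha}$. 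The crucial point is that this last error is absorbed by hypothesis~\eqref{6.10}: after the change of variables $y\mapsto y-h$ in the tail integral it is bounded by $C\,\Upsilon\,|h|\,\|u\|_\infty\,|h|^{-\alpha}=C\,\Upsilon\,\|u\|_\infty\,|h|^{1-\alpha}$, which is small since $\alpha<1$. Thus $E_h$ stays bounded, in fact arbitrarily small for $|h|$ small, uniformly in $\sigma$, thanks to the $(2-\sigma)$ factor in~\eqref{1.3} and the splitting of the kernel integral into the annuli $S_{r_k}\setminus S_{r_{k+1}}$ as in Lemma~\ref{l3.1}. Since $\|\psi_h\|_{L^\infty}\le C$ locally by the previous step, Theorem~\ref{t6.2} applied to the truncated $\psi_h$ yields $[\psi_h]_{C^\alpha}\le C$, uniformly in $h$.

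A uniform $C^\alpha$ bound on the quotients $|h|^{-\alpha}(u(\cdot+h)-u)$ upgrades $u$ to $C^{2\alpha}$ when $2\alpha<1$; iterating the previous paragraph I would obtain $u\in C^{k\alpha}$ for every $k$ with $k\alpha<1$, so after finitely many steps $u$ is Lipschitz and $\nabla u$ exists. A final application of Theorem~\ref{t6.2}, via the same scheme, to the first-order quotients $|h|^{-1}(u(\cdot+h)-u)$ --- now locally bounded since $u$ is Lipschitz --- then gives $\nabla u\in C^\gamma(S_{\rho/2})$ for some $\gamma\in(0,1)$, that is, $u\in C^{1,\gamma}(S_{\rho/2})$, with the asserted dependence of the constants on $\|T\|$, $\lambda$, $\Lambda$, $\sigma_0$, $\Upsilon$ and $n$.

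The main obstacle, and what distinguishes this situation from the translation-invariant model of \cite{CS09}, is establishing that $E_h$ in the second paragraph is genuinely small uniformly as $\sigma\to2$: one has to simultaneously absorb the truncation tail through~\eqref{6.10} and control the defect induced by the deformation of the kernels, exploiting the symmetry of $K_x^{\alpha\beta}$ and the deformation estimates of Theorems~\ref{t2.2} and~\ref{t2.4} to compare $v_x(\cdot)$ with $v_{x+h}(\cdot)$ and to keep the relevant integrals convergent and $\sigma$-uniform. Once this uniform smallness is secured, the bootstrap is routine and essentially identical to \cite[Section~13]{CS09}.
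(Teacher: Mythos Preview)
Your proposal is correct and matches the paper's approach exactly: the paper itself omits the proof entirely, stating only that it ``is essentially the same as the one of Theorem~13.1 of \cite{CS09}.'' You have in fact supplied more detail than the paper does, and you correctly flag the one genuine subtlety beyond \cite{CS09}---the $x$-dependence of the kernels through $v_x$, which prevents $I$ from being translation-invariant and produces the extra defect term $I(u(\cdot+h))(x)-(Iu)(x+h)$---whereas the paper passes over this point in silence.
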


\medskip

\textbf{Acknowledgments.} We thank the anonymous referees for their comments and suggestions that helped improving an earlier version of the paper.

\end{document}